\newcommand\blfootnote[1]{%
  \begingroup
  \renewcommand\thefootnote{}\footnote{#1}%
  \addtocounter{footnote}{-1}%
  \endgroup
}
\newtheorem{theorem}{Theorem}
\newtheorem{prop}{Proposition}
\newtheorem{lemma}{Lemma}
\newtheorem*{remark}{Remark}
\newtheorem{claim}{Claim}
\newtheorem*{definition}{Definition}
\newtheorem{cor}{Corollary}
\newtheorem{conj}{Conjecture}
\def\Xint#1{\mathchoice
  {\XXint\displaystyle\textstyle{#1}}%
  {\XXint\textstyle\scriptstyle{#1}}%
  {\XXint\scriptstyle\scriptscriptstyle{#1}}%
  {\XXint\scriptscriptstyle\scriptscriptstyle{#1}}%
  \!\int}
\def\XXint#1#2#3{{\setbox0=\hbox{$#1{#2#3}{\int}$}
  \vcenter{\hbox{$#2#3$}}\kern-.5\wd0}}
\def\dashint{\Xint-}
\author{Gang Liu}
\address{Department of Mathematics\\University of California, Berkeley\\Berkeley, CA 94720}
\email{gangliu@math.berkeley.edu}
\title[Finite generation conjecture]{Gromov-Hausdorff limit of K\"ahler manifolds and the finite generation conjecture}
\date{}
\begin{document}
\begin{abstract}
We study the uniformization conjecture of Yau by using the Gromov-Haudorff convergence. As a consequence, we confirm Yau's finite generation conjecture. More precisely, 
on a complete noncompact K\"ahler manifold with nonnegative bisectional curvature, the ring of polynomial growth holomorphic functions is finitely generated.
During the course of the proof, we prove if $M^n$ is a complete noncompact K\"ahler manifold with nonnegative bisectional curvature and maximal volume growth, then $M$ is biholomorphic to an affine algebraic variety. We also confirm a conjecture of Ni on the existence of polynomial growth holomorphic functions on K\"ahler manifolds with nonnegative bisectional curvature.
\end{abstract}
\maketitle

\section{\bf{Introduction}}
\blfootnote{The author was partially supported by NSF grant DMS 1406593.}
In \cite{[Y1]}, Yau proposed to study the uniformization of complete K\"ahler manifolds
with nonnegative curvature. In particular, one wishes to determine whether or not a complete noncompact K\"ahler manifold with positive bisectional curvature is biholomorphic to a complex Euclidean space.
For this sake, Yau further asked in \cite{[Y1]}(see also page $117$ in \cite{[Y2]}) whether or not the ring of polynomial growth holomorphic functions is finitely generated, and whether or not dimension of the
spaces of holomorphic functions of polynomial growth is bounded from above by the
dimension of the corresponding spaces of polynomials on $\mathbb{C}^n$. 
Let us summarize Yau's questions in the three conjectures below:
\begin{conj}\label{conj1}
Let $M^n$ be a complete noncompact K\"ahler manifold with positive bisectional curvature. Then $M$ is biholomorphic to $\mathbb{C}^n$.
\end{conj}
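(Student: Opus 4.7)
My plan is to attack Conjecture~\ref{conj1} by combining Gromov--Hausdorff convergence at infinity with $L^2$-estimates for $\bar\partial$. I would begin by rescaling the metric as $g_i = r_i^{-2}g$ for a sequence $r_i\to\infty$ and passing to a pointed Gromov--Hausdorff limit. Under positive bisectional curvature, one first has to argue that $M$ has maximal volume growth (otherwise the asymptotic cone collapses); this I would attempt via a Cheeger--Colding splitting argument at infinity combined with the strict positivity of the bisectional curvature. Granted maximal volume growth, Cheeger--Colding theory identifies the tangent cone with a metric cone $C(Y)$ whose regular part inherits a K\"ahler cone structure.

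Next, I would construct polynomial-growth holomorphic functions on $M$ by lifting homogeneous holomorphic functions from the cone. On $C(Y)$, the spaces of degree-$d$ homogeneous holomorphic functions are finite dimensional, and H\"ormander's $L^2$-estimate on a well-chosen exhaustion of $M$, together with a three-circle/monotonicity principle for holomorphic functions of polynomial growth on $M$, should produce functions whose growth matches their cone limits. In particular, one should extract $n$ linear-growth functions $f_1,\dots,f_n$ giving a proper holomorphic map $F\colon M\to\mathbb{C}^n$. Finite generation of the ring of polynomial-growth holomorphic functions (the other main result announced in the abstract) would then realize $M$ as an affine algebraic variety, and it would remain to show $F$ is a biholomorphism: a Jacobian computation using positivity of bisectional curvature should rule out the branch locus, and a degree argument combined with Stein-theoretic extension should show that $F$ is bijective onto $\mathbb{C}^n$.

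The chief obstacle is the dichotomy between maximal and non-maximal volume growth. In the collapsing regime the tangent cone loses dimension and the correspondence between polynomial-growth holomorphic functions on $M$ and on the limit breaks down, so a completely different reduction is needed — perhaps via an iterated splitting and induction on dimension, or by first establishing Ni's conjecture to populate enough holomorphic functions directly. Even under maximal volume growth, ruling out singularities of the image variety and upgrading ``affine algebraic'' to ``$\mathbb{C}^n$'' requires using strict positivity of the bisectional curvature in an essential way — likely through a Ricci-flow smoothing argument in the spirit of Shi or Chau--Tam — which is itself a delicate matter and which I would expect to be the true barrier to a complete resolution.
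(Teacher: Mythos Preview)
The statement you are attempting to prove is Conjecture~\ref{conj1}, and the paper does \emph{not} prove it: the introduction states explicitly that ``Conjecture~\ref{conj1} is open so far.'' There is therefore no proof in the paper to compare your proposal against. What the paper does establish are partial results in this direction: Theorem~\ref{thm2} (maximal volume growth implies $M$ is biholomorphic to an affine algebraic variety), Theorem~\ref{thm3} (maximal volume growth implies $\mathcal{O}_P(M)\neq\mathbb{C}$), and the gap theorem of Section~4 (if the volume ratio is sufficiently close to Euclidean for all $r$, then $M\cong\mathbb{C}^n$).

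Your proposal correctly identifies the two essential gaps that prevent a full proof, and these are precisely the gaps the paper leaves open. First, deducing maximal volume growth from positive bisectional curvature is not known; your suggested route via Cheeger--Colding splitting plus strict positivity is vague and there is no known mechanism by which pointwise positivity of bisectional curvature forces noncollapse at infinity. Second, even granting maximal volume growth, the paper only reaches ``affine algebraic variety,'' not $\mathbb{C}^n$. Your plan to extract $n$ \emph{linear}-growth holomorphic functions is not supported by the paper's methods: the three-circle argument and $L^2$-estimates produce functions of polynomial growth with degree bounded by $C(n,v)$, not degree $1$, except in the near-Euclidean gap regime of Section~4. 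Without linear-growth coordinates the Jacobian/degree argument you sketch does not get off the ground, and invoking Ricci flow \`a la Chau--Tam to smooth the situation is exactly the kind of deep input that remains unresolved. In short, your outline is a reasonable description of how one might hope the conjecture is eventually proved, but both of the steps you flag as obstacles are genuine open problems, and neither is settled here.
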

\begin{conj}\label{conj2}
Let $M^n$ be a complete noncompact K\"ahler manifold with nonnegative bisectional curvature. Then the ring $\mathcal{O}_P(M)$ is finitely generated.\end{conj}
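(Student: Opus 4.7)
The plan is to break the proof of Conjecture \ref{conj2} into two cases according to the asymptotic volume growth of $M$. By Bishop--Gromov, $V(B(p,r))/r^{2n}$ is monotone decreasing, so either $M$ has maximal volume growth or not. The maximal case should reduce to the biholomorphism-to-affine-variety statement mentioned in the abstract (and the fact that coordinate rings of affine varieties are Noetherian), while the non-maximal case should admit a dimensional reduction using splitting theorems. Throughout, Gromov--Hausdorff convergence of the rescalings $(M, p, r_k^{-2} g)$ with $r_k \to \infty$ will be the organizing tool.

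For the maximal volume growth case, I would analyze tangent cones at infinity. By Cheeger--Colding, every such pointed limit is a metric cone $C(Y)$ with singular set of large Hausdorff codimension, and the Kähler structure passes to the limit to make $C(Y)$ a normal complex analytic space. The strategy is then to produce, on each rescaled copy of $M$, a controlled family of polynomial growth holomorphic functions via Hörmander-type $L^2$ estimates, and to show these converge (along subsequences) to homogeneous holomorphic functions on the cone. A three-circle / Liouville-type argument (cf.\ Ni) gives sharp growth control. These functions should separate points and tangent vectors on $M$, producing a proper holomorphic embedding $F : M \to \mathbb{C}^N$ whose image is affine algebraic by a degree count via Bezout applied to intersections with generic affine linear subspaces of complementary dimension. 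Then $\mathcal{O}_P(M)$ is a finitely generated module over the (finitely generated) coordinate ring of $F(M)$, giving the conjecture in this case.

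For the non-maximal volume growth case, I would invoke a Cheeger--Gromoll-type splitting in its Kähler form: tangent cones at infinity must split off a Euclidean factor of positive dimension, or else the volume growth order drops to a lower integer. In the first situation, the splitting produces a nontrivial bounded linear holomorphic function on the limit, which, together with a Busemann-function construction, lifts to a polynomial growth holomorphic function on $M$ that realizes a partial splitting. In the second, the structure theory for Kähler manifolds of non-maximal volume growth with $\operatorname{Bisec} \geq 0$ lets one induct on complex dimension after passing to a holomorphic fiber bundle description. Either way the conjecture follows from the maximal case in a lower dimension.

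The main obstacle will be in the maximal volume growth step, and has two faces. First, one must show that the embedding $F : M \to \mathbb{C}^N$ is proper and that its image is algebraic; this requires a uniform upper bound on the Nevanlinna-type counting function $n_F(r) = \#\bigl(F(M) \cap L\bigr) \cap B(0,r)$ for generic $L$, which in turn depends delicately on matching the asymptotic volume of $F(M)$ with the Euclidean volume via the asymptotic conicity provided by Cheeger--Colding. Second, one must control the singularities of the tangent cone enough to run the partial regularity theory of Cheeger--Colding--Tian together with the Kähler condition; without this, the limits of the constructed holomorphic functions could be too degenerate to separate points back on $M$. These are the points where the argument will be the most technical.
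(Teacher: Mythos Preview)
Your decomposition into maximal versus non-maximal volume growth is not the one the paper uses, and your treatment of the non-maximal case has a genuine gap.

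For the maximal volume growth case your outline is broadly aligned with the paper: one builds enough polynomial growth holomorphic functions via H\"ormander $L^2$ estimates and the three circle theorem, proves the resulting map $F:M\to\mathbb{C}^N$ is proper, and identifies the image as an affine variety. You correctly flag properness as the hard point. The paper's mechanism for properness is not a Nevanlinna counting/Bezout argument, however; it is Theorem~\ref{thm4}, proved by an induction on the dimension of the Euclidean factor in a tangent cone, which produces finitely many $f_1,\dots,f_N\in\mathcal{O}_P(M)$ with $\sum|f_j(x)|^2\geq c\,d(x,p)^2$. Your Bezout idea would require a priori control on the degree of $F(M)$, which you do not have before you know $F$ is proper and the image is algebraic, so there is a circularity there you would need to break.

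The serious gap is in the non-maximal volume growth case. Your proposed dichotomy---either the tangent cone at infinity splits a Euclidean factor, or the volume growth order drops---does not give a workable reduction. A Euclidean factor in a tangent cone at infinity does \emph{not} lift to a holomorphic splitting of $M$, nor does it produce a polynomial growth holomorphic function on $M$ realizing a partial splitting; there is no general mechanism for this. Likewise there is no ``holomorphic fiber bundle'' structure theory for non-maximal volume growth with $\mathrm{Bisec}\geq 0$ that would let you induct on dimension. The paper proceeds completely differently: the correct dichotomy is on whether the universal cover $\tilde M$ splits. If $\tilde M$ does not split, then by \cite{[L2]} the existence of any nonconstant $f\in\mathcal{O}_P(M)$ forces maximal volume growth; hence in the non-maximal case $\mathcal{O}_P(M)=\mathbb{C}$, which is trivially finitely generated. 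If $\tilde M$ splits, one identifies $\mathcal{O}_P(M)$ with the $G'$-invariants in $\mathcal{O}_P(M_1)$ for a suitable factor $M_1$ of maximal volume growth and a \emph{compact} group $G'$ (compactness requires a separate argument), and then invokes Nagata's theorem on finite generation of invariants. None of these ingredients---the use of \cite{[L2]}, the passage to the universal cover, the compactness of $G'$, or Nagata---appear in your plan, and without them the non-maximal case does not close.
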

\begin{conj}\label{conj3}
Let $M^n$ be a complete noncompact K\"ahler manifold with nonnegative bisectional curvature. Then given any $d>0$, $dim(\mathcal{O}_d(M))\leq dim(\mathcal{O}_d(\mathbb{C}^n))$. \end{conj}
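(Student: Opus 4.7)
The plan is to prove Conjecture \ref{conj3} via a leading-term analysis on a Gromov--Hausdorff tangent cone at infinity, reducing the count of polynomial growth holomorphic functions on $M$ to that of homogeneous polynomials on $\mathbb{C}^n$. First I would fix $d>0$ and a basepoint $p \in M$, and consider a blow-down sequence $(M, p, \epsilon_i^2 g)$ with $\epsilon_i \to 0$. By Gromov's precompactness together with nonnegative bisectional curvature, a subsequence converges in the pointed Gromov--Hausdorff sense to a length space $(M_\infty, p_\infty)$; Cheeger--Colding--Tian structure theory gives that $M_\infty$ is a metric cone whose regular set carries a smooth K\"ahler structure. Crucially, the volume density of $M_\infty$ is bounded above by that of $\mathbb{C}^n$ via Bishop--Gromov, so at the level of cones $M_\infty$ is ``no bigger'' than $\mathbb{C}^n$.

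The second step is to establish a three-circles type theorem for holomorphic functions on $M$: for any $f \in \mathcal{O}(M)$, the quantity $M_f(r) := \sup_{B(p,r)} |f|$ satisfies $r \mapsto \log M_f(e^r)$ is essentially convex, with $\limsup_{r \to \infty} \log M_f(r)/\log r$ defining an intrinsic order of growth $\mathrm{ord}(f)$. Polynomial growth of degree $\leq d$ on $M$ then corresponds to $\mathrm{ord}(f) \leq d$. By scaling $f$ appropriately and passing to a Gromov--Hausdorff subsequential limit (using elliptic estimates on the regular set of $M_\infty$ and harmonic convergence), each nonzero $f$ of finite order $\alpha \leq d$ produces a nonzero holomorphic function $f_\infty$ on the regular part of $M_\infty$ which is homogeneous of degree $\alpha$ with respect to the cone dilations.

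The third step is to filter $\mathcal{O}_d(M)$ by order, $\mathcal{O}_{<\alpha}(M) \subset \mathcal{O}_{\leq\alpha}(M) \subset \mathcal{O}_d(M)$, and to show that the map $f \mapsto f_\infty$ induces an \emph{injective} linear map on each graded piece into the space of degree-$\alpha$ homogeneous holomorphic functions on the tangent cone. Finally, I would argue that every such homogeneous function on $M_\infty$ extends to a polynomial on $\mathbb{C}^n$ of the same degree: the key input here is that the cone $M_\infty$ sits inside $\mathbb{C}^n$ in a volume-comparable way, and homogeneous polynomial growth holomorphic functions on $M_\infty$ can be pushed to $\mathbb{C}^n$ (or compared with them via a further tangent-cone-of-$\mathbb{C}^n$ identification that is trivial). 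Summing over $\alpha$ and comparing graded pieces would then yield $\dim \mathcal{O}_d(M) \leq \dim \mathcal{O}_d(\mathbb{C}^n)$.

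The main obstacle I expect is step three, in two flavors. First, the tangent cone need not be unique nor smooth, so ``homogeneous holomorphic function of degree $\alpha$'' must be defined intrinsically on a singular complex cone, and the leading-term construction must be shown to be independent (up to the dimension count) of the blow-down subsequence. Second, proving injectivity of $f \mapsto f_\infty$ on graded pieces is essentially a unique-continuation/$L^2$-H\"ormander statement: one must rule out that the rescaled $\epsilon_i^\alpha f$ degenerates to zero on the regular set, which requires gradient estimates and a quantitative lower bound on $M_f(r)/r^\alpha$ forced by the three-circles convexity. A subsidiary technical point is to verify that arbitrary homogeneous holomorphic functions on $M_\infty$ contribute dimension at most that of homogeneous polynomials on $\mathbb{C}^n$ of the same degree; this should follow from an $L^2$-reproducing-kernel comparison using the volume density bound, but care is needed at the singular stratum of the cone.
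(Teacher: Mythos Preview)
The paper does not give its own proof of Conjecture~\ref{conj3}; it records the result as Theorem~\ref{thm9}, attributing it to Ni \cite{[N1]} and Chen--Fu--Le--Zhu \cite{[CFLZ]}, and notes that it also follows from the three-circle theorem via Corollary~\ref{cor1}. That argument is short and entirely local: if $f\in\mathcal{O}_d(M)$ vanishes at $p$ to order $k>d$, then $M(r)/r^d\to 0$ as $r\to 0^+$; but $M(r)/r^d$ is nonincreasing by Corollary~\ref{cor1}, forcing $f\equiv 0$. Hence the jet map $\mathcal{O}_d(M)\to J_p^{\lfloor d\rfloor}$ is injective, and the target has dimension exactly $\dim\mathcal{O}_d(\mathbb{C}^n)$.

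Your proposal takes a genuinely different route---working at infinity rather than at a point---but it has two real gaps.

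First, the conjecture carries no maximal volume growth assumption. Without it the blow-down limit $M_\infty$ may have Hausdorff dimension strictly less than $2n$; the regular set need not carry an $n$-dimensional K\"ahler structure, and there is no meaningful notion of ``holomorphic function on $M_\infty$'' to receive your leading-term map. Your outline silently assumes noncollapsing.

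Second, and more seriously, even in the noncollapsed case your final step is circular. You need that the space of degree-$\alpha$ homogeneous holomorphic functions on the singular cone $M_\infty$ has dimension at most that of degree-$\alpha$ polynomials on $\mathbb{C}^n$. The assertion that ``$M_\infty$ sits inside $\mathbb{C}^n$'' is not true in any useful sense: the tangent cone is an abstract metric cone, not a subvariety of $\mathbb{C}^n$, and a volume-density or $L^2$-reproducing-kernel heuristic does not produce pointwise dimension bounds of this kind. You are reduced to proving the same inequality for $M_\infty$ that you set out to prove for $M$, now on a more singular space. By contrast, the three-circle route sidesteps the cone entirely: the same convexity you invoke in your step two, read at $r\to 0$ rather than $r\to\infty$, already gives the bound in one line.
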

On a complete K\"ahler manifold $M$,  we say a holomorphic function $f\in \mathcal{O}_d(M)$, if there exists some $C>0$ with $|f(x)|\leq C(1+d(x, x_0))^d$ for all $x\in M$. Here $x_0$ is a fixed point on $M$. Let $\mathcal{O}_P(M) = \cup_{d>0}\mathcal{O}_d(M)$. 

Conjecture \ref{conj1} is open so far. However, there have been many important progresses due to various authors. In earlier works, Mok-Siu-Yau \cite{[MSY]} and Mok \cite{[Mo]} considered embedding by using holomorphic functions of polynomial growth. Later, with K\"ahler-Ricci flow, results were improved significantly \cite{[Shi1]}\cite{[Shi2]}\cite{[CZ]}\cite{[N2]}\cite{[CT]}.

Conjecture \ref{conj3} was confirmed by Ni \cite{[N1]} with the assumption that $M$ has maximal volume growth. Later, by using Ni's method, Chen-Fu-Le-Zhu \cite{[CFLZ]} removed the extra condition.  See also \cite{[L1]} for a different proof. The key of Ni's method is a monotonicity formula for heat flow on K\"ahler manifold with nonnegative bisectional curvature. 

Despite great progresses of conjecture \ref{conj1} and conjecture \ref{conj3}, not much is known about conjecture \ref{conj2}.
In \cite{[Mo]}, Mok proved the following:
\begin{theorem}[Mok] 
Let $M^n$ be a complete noncompact K\"ahler manifold with positive bisectional curvature  such that for some fixed point $p\in M$, 
\begin{itemize}
\item Scalar curvature $\leq \frac{C_0}{d(p, x)^2}$ for some $C_0>0$;
\item $Vol(B(p, r))\geq C_1r^{2n}$ for some $C_1>0$.
\end{itemize}
Then $M^n$ is biholomorphic to an affine algebraic variety. 
\end{theorem}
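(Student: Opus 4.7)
The plan is to follow a three-stage outline in the spirit of Mok--Siu--Yau: produce a large supply of polynomial growth holomorphic functions via $L^{2}$-methods, package them into a proper holomorphic embedding $F:M\to\mathbb{C}^{N}$, and use polynomial growth to upgrade the image from analytic to algebraic.

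For the first stage I would construct a strictly plurisubharmonic exhaustion $\rho$ on $M$ of logarithmic growth. The quadratic decay $R\le C_{0}/d(p,\cdot)^{2}$ lets one solve a Poincar\'e--Lelong equation $\sqrt{-1}\,\partial\bar\partial u=\mathrm{Ric}(\omega)$ with $u(x)\le C\log(1+d(p,x))$, and smoothing yields a $\rho$ whose Levi form is bounded below by $c\,\omega/(1+d(p,\cdot)^{2})$. H\"ormander's $L^{2}$ estimate for $\bar\partial$ with weight $(n+d)\rho$, applied to cut-off local holomorphic data, then produces functions $f\in\mathcal{O}_{d}(M)$ in abundance; in particular one can separate points and tangent vectors on any prescribed compact set by elements of $\mathcal{O}_{P}(M)$.

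For the second stage I would use the maximal volume growth to globalize. A Cheeger--Colding type blow-down, combined with the curvature decay, shows that every tangent cone of $M$ at infinity is flat $\mathbb{C}^{n}$. This makes it possible to select a finite family $f_{1},\dots,f_{N}\in\mathcal{O}_{P}(M)$ from stage one so that each $f_{i}$ approximates a linear coordinate of a chosen tangent cone near infinity, giving a lower bound $\sum_{i}|f_{i}(x)|^{2}\ge c\,d(p,x)^{2}$ outside some compact set. Consequently $F=(f_{1},\dots,f_{N}):M\to\mathbb{C}^{N}$ is proper; after adjoining additional separating functions, $F$ is a holomorphic embedding onto a closed pure $n$-dimensional analytic subvariety $V\subset\mathbb{C}^{N}$.

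The third stage is algebraicity: since each $f_{i}$ has polynomial growth, $V$ has polynomial volume growth in $\mathbb{C}^{N}$, i.e.\ it is a closed analytic subvariety of finite order, and Stoll's theorem then yields that $V$ is affine algebraic, giving the claimed biholomorphism $M\simeq V$. The main obstacle will be stage two: the $L^{2}$ construction is intrinsically local at infinity, so coaxing finitely many globally defined holomorphic functions to simultaneously approximate linear coordinates on every end of $M$---and thus globally forcing $F$ to be proper---requires a uniform quantitative version of the construction that is carefully tied to the asymptotic cone geometry provided by maximal volume growth together with quadratic curvature decay.
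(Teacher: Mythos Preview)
The paper does not prove this theorem. It is quoted in the introduction as a prior result of Mok (reference \cite{[Mo]}), with no proof given; the paper's own contributions are Theorems \ref{thm1}--\ref{thm4}, which in particular generalize Mok's theorem by dropping the pointwise curvature decay hypothesis. So there is no ``paper's own proof'' to compare against.

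That said, a few remarks on your outline relative to what is in the literature and in this paper. Your stage one is essentially Mok--Siu--Yau and is correct under the quadratic decay hypothesis. Your stage three via Stoll's theorem is a legitimate alternative route to algebraicity; the present paper instead argues (in the proof of Theorem \ref{thm12}) via the proper mapping theorem and an inductive dimension-drop on the singular locus, in the style of Donaldson--Sun. Your stage two, however, is not Mok's argument at all: Mok's original proof of properness under these hypotheses is long and intricate (the paper notes it occupies over thirty-five pages of \cite{[Mo]}), and does not proceed by Cheeger--Colding tangent cones. What you have sketched is much closer to the strategy of the present paper's Theorem \ref{thm4}, which establishes properness under the weaker hypothesis of maximal volume growth alone. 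You correctly flag this as the main obstacle, and indeed the step ``select $f_1,\dots,f_N$ so that each $f_i$ approximates a linear coordinate of a chosen tangent cone near infinity'' hides the entire difficulty: tangent cones are not unique, the approximating holomorphic functions live only on large balls and must be glued or limited via a three-circle argument, and one needs a uniform mechanism (Proposition \ref{prop3} and the induction on the splitting factor in Section 6) to guarantee the lower bound $\sum|f_i|^2\ge c\,d(p,x)^2$ on \emph{every} sphere, not just along one blow-down sequence. Your sketch does not supply this mechanism.
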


In Mok's proof, the biholomorphism was given by holomorphic functions of polynomial growth. Therefore, $\mathcal{O}_P(M)$ is finitely generated. 
In the general case, it was proved by Ni \cite{[N1]} that the transcendental dimension of $\mathcal{O}_P(M)$ over $\mathbb{C}$ is at most $n$. However, this does not imply the finite generation of $\mathcal{O}_P(M)$.
The main result in this paper is the confirmation of conjecture \ref{conj2} in the general case:
\begin{theorem}\label{thm1}
Let $M^n$ be a complete noncompact K\"ahler manifold with nonnegative bisectional curvature. Then the ring $\mathcal{O}_P(M)$ is finitely generated.
\end{theorem}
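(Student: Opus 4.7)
The plan is to first establish the intermediate statement advertised in the abstract -- that if $M$ additionally has maximal volume growth then $M$ is biholomorphic to an affine algebraic variety -- and then to reduce the general case to this one via asymptotic splitting. In the affine variety case, finite generation of $\mathcal{O}_P(M)$ follows from Hilbert's basis theorem applied to the coordinate ring, provided that the embedding is realized by holomorphic functions of polynomial growth. The underlying principle is to use Gromov--Hausdorff convergence of rescalings $(M,p,r_i^{-2}g)$ with $r_i\to\infty$ to identify a ``model'' at infinity, and then to transfer asymptotic holomorphic data back to $M$.

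In the maximal volume growth case I would choose $r_i\to\infty$ and extract a subsequential tangent cone $(C(Y),o)$; by Cheeger--Colding this is a metric cone of Hausdorff dimension $2n$ whose regular set is open and dense. Heat kernel and gradient estimates, together with the nonnegativity of the bisectional curvature, equip this regular set with a K\"ahler cone structure and furnish ``linear'' holomorphic functions $z_1,\dots,z_N$ on $C(Y)$. The central technical step is to \emph{lift} cutoffs of the $z_j$ to honest polynomial-growth holomorphic functions $f_j$ on $M$: one solves $\bar\partial u=\bar\partial(\chi z_j)$ with Hörmander's $L^2$ estimate, which is available with trivial weight thanks to the nonnegative Ricci curvature, and then uses Cheng--Yau gradient estimates and a three-annulus argument to promote the $L^2$ smallness of the error into a uniform linear-growth bound. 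Choosing $N$ large enough, the map $F=(f_1,\dots,f_N)\colon M\to\mathbb{C}^N$ is checked to be a proper holomorphic embedding by comparing $F$, on every rescaled window, with the cone coordinates, which already separate points on the cross section and have nonvanishing radial derivative.

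Once $M$ embeds properly as a closed analytic subset of $\mathbb{C}^N$ of pure dimension $n$ via a map of polynomial growth, I would invoke Ni's sharp dimension bound $\dim\mathcal{O}_d(M)\le\dim\mathcal{O}_d(\mathbb{C}^n)$ together with a Rudin--Stoll type degree counting (in the spirit of Mok) to conclude that $F(M)$ is algebraic; hence $\mathcal{O}_P(M)$ coincides with the affine coordinate ring of $F(M)$ and is finitely generated. To handle the general (not necessarily maximal volume growth) case, I would invoke the Cheeger--Colding splitting theorem: since lines appear in tangent cones whenever the volume growth is strictly sub-Euclidean, the tangent cone splits isometrically, and for K\"ahler manifolds with nonnegative bisectional curvature the splitting upgrades to the form $\mathbb{C}^k\times C(Z)$ where $C(Z)$ realizes its maximal volume growth in dimension $2(n-k)$. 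I would then apply the above lifting procedure both to the $\mathbb{C}^k$ factor and, transversely, to $C(Z)$, assembling an embedding $M\to\mathbb{C}^{N}$ of polynomial growth whose image is again algebraic; an induction on $n$ (with the transverse piece corresponding to a genuinely lower-dimensional picture) closes the argument.

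The main obstacle I anticipate is precisely the lifting step. Gromov--Hausdorff convergence is only $C^0$, carries no a priori complex-analytic information, and is taken through singular limit spaces, so transferring holomorphic functions between scales is subtle: one needs Cheeger--Colding $\ep$-regularity together with smooth approximation of the K\"ahler form on the regular set, a careful construction of cutoffs so that $\bar\partial(\chi z_j)$ is concentrated where Hörmander's weighted estimate is effective, and uniform control across the sequence $r_i$ so that the resulting $f_j$ genuinely exhibit polynomial growth rather than mere local existence. Closely related, one must verify that the lifted functions recover \emph{all} cone coordinates simultaneously so that $F$ is truly an embedding, which requires passing to limits along diagonal subsequences of scales and using the rigidity of K\"ahler cones with nonnegative bisectional curvature to rule out degeneracies.
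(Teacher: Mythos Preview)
Your outline for the maximal volume growth case is broadly in the right spirit, but two technical points are understated. First, H\"ormander's estimate is \emph{not} available with trivial weight here: $\operatorname{Ric}\ge 0$ gives only a semidefinite inequality, and the paper invests substantial effort (Proposition~\ref{prop1}, Claim~\ref{cl1}, Proposition~\ref{prop2}) in building strictly plurisubharmonic weights of controlled growth via the Ni--Tam heat flow (Theorem~\ref{thm6}). Second, properness of the map $F$ is the heart of the argument and is not a direct comparison with cone coordinates; the paper proves it (Theorem~\ref{thm4}, Proposition~\ref{prop3}) by an induction on the dimension of the Euclidean factor of a tangent cone, manufacturing at each stage new holomorphic functions that are bounded below on the ``bad'' set where the previous ones were small. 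Your sketch does not supply a mechanism for this.

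The more serious gap is your reduction in the general case. Splitting of a \emph{tangent cone} as $\mathbb{C}^k\times C(Z)$ does not let you run the embedding argument on $M$ itself: if $M$ fails to have maximal volume growth there is no proper polynomial-growth map to any $\mathbb{C}^N$, and $C(Z)$ is not a manifold on which you can induct. The paper's route is entirely different. It invokes \cite{[L2]}: if the universal cover $\tilde M$ does not split and $\mathcal{O}_P(M)\neq\mathbb{C}$, then $M$ already has maximal volume growth, so the affine-variety theorem applies. If $\tilde M$ does split, one isolates (via the null distribution of the heat-evolved potentials $u^f_t$) a factor $M_1$ carrying all of $\mathcal{O}_P(M)$, shows $M_1$ has maximal volume growth, proves the induced deck-transformation action on $M_1$ has \emph{compact} closure $G'$ (Claim~\ref{cl9}), and then concludes by Nagata's theorem that the $G'$-invariants of the finitely generated ring $\mathcal{O}_P(M_1)$ are finitely generated. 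Your induction-on-dimension scheme does not capture this dichotomy and would need to be replaced.
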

During the course of the proof, we obtain a partial result for conjecture \ref{conj1}:
\begin{theorem}\label{thm2}
Let $M^n$ be a complete noncompact K\"ahler manifold with nonnegative bisectional curvature. Assume $M$ is of maximal volume growth, then $M$ is biholomorphic to an affine algebraic variety.
\end{theorem}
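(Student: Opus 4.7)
The plan is to produce a proper holomorphic embedding of $M$ into some $\mathbb{C}^N$ by holomorphic functions of polynomial growth, and then show that the image is algebraic by a volume/growth argument. The overall scheme mirrors Mok's strategy, but the classical curvature-decay hypothesis is replaced by a structural analysis of tangent cones at infinity via Cheeger--Colding theory applied in the K\"ahler setting.

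First I would analyze the asymptotic geometry of $M$. Because $M$ has nonnegative Ricci curvature and maximal volume growth, any Gromov--Hausdorff tangent cone at infinity $M_\infty$ is a metric cone $C(Y)$ whose regular set $\mathcal{R}$ is an open smooth Riemannian manifold, with singular set of Hausdorff codimension at least two. The K\"ahler condition, combined with nonnegative bisectional curvature, should propagate to $\mathcal R$, so that $\mathcal R$ inherits a K\"ahler cone structure: a parallel complex structure together with a radial holomorphic vector field generated by $r\partial_r$. On the cone, explicit ``homogeneous'' holomorphic functions (essentially linear coordinates on $C(Y)$) can be constructed using the cone K\"ahler potential $r^2/2$.

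Next, I would transfer these model holomorphic functions back to $M$. For a sequence of scales $r_k\to\infty$ along which $(M, r_k^{-2}g,p)\to C(Y)$, I would use H\"ormander's $L^2$ method with the plurisubharmonic weight coming from the Green's function (or more concretely $(n-1)\log(1+d^2(\cdot,p))$ type weights, justified using the maximal volume growth and Liouville-type controls on plurisubharmonic functions) to solve $\bar\partial u = \bar\partial(\chi f_k)$ with polynomial $L^2$ growth, where $f_k$ is a smooth extension of a model holomorphic function on the rescaled ball and $\chi$ is a cutoff. The resulting corrected function $\chi f_k - u$ is honestly holomorphic on $M$, and the weight estimate gives it polynomial growth. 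Passing to a subsequential limit of normalized such functions produces a nontrivial element of $\mathcal O_d(M)$ whose blow-down recovers a prescribed model holomorphic function on $C(Y)$.

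After that, I would argue that finitely many such functions $f_1,\ldots,f_N$ can be chosen so that $F=(f_1,\ldots,f_N)\colon M\to\mathbb C^N$ is an injective holomorphic immersion. Injectivity and immersivity at any fixed point follow by choosing model data on $C(Y)$ that separates any two given rays or tangent directions and approximating as above. To upgrade this to a proper embedding, the key point is that the constructed $f_i$ have nonvanishing leading term on the cone, so their squared moduli control $r^{2d_i}$ along every sequence going to infinity; combined with Ni's theorem (that $\dim\mathcal{O}_d(M) < \infty$), properness follows from a compactness/contradiction argument using Gromov--Hausdorff convergence. Finally, since $F$ is proper and each $f_i$ has polynomial growth, the volume of $F(M)$ in $\mathbb{CP}^N$ grows polynomially, so by Bishop's extension theorem (or the Stoll--Rutishauser algebraicity criterion) $\overline{F(M)}\subset\mathbb{CP}^N$ is an analytic subvariety, hence algebraic by Chow, and $F(M)$ is an affine algebraic variety.

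The main obstacle I expect is the construction step for the polynomial growth holomorphic functions and the subsequent properness: one has to guarantee that the H\"ormander correction does not destroy the leading behavior of the model function on the cone, which is a quantitative problem because the correction is only controlled in an $L^2$ sense with polynomial weights and the singular set of $C(Y)$ prevents naive pointwise estimates. This forces a careful choice of weights, iteration across many scales, and use of the three-circle/monotonicity behavior for holomorphic functions under nonnegative bisectional curvature to pin down asymptotic degrees.
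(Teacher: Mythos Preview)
Your overall architecture matches the paper's---construct polynomial-growth holomorphic functions via H\"ormander estimates with Cheeger--Colding input, assemble a proper holomorphic map, then extract algebraicity---but two of your steps contain genuine gaps that the paper works hard to fill.

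\medskip

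\textbf{The cone is not known to be K\"ahler.} You assume that the regular part $\mathcal R$ of a tangent cone $C(Y)$ at infinity inherits a K\"ahler cone structure with ``homogeneous holomorphic functions (essentially linear coordinates).'' This is not established: the tangent cone is only a metric cone, and no complex structure on the limit is available in this generality. The paper never works on the cone; instead it works on large balls in $M$ itself. Proposition~\ref{prop1} builds a genuine holomorphic chart of \emph{uniform} size on $M$ near any point where a rescaled ball is Gromov--Hausdorff close to a Euclidean ball, using the heat flow (Theorem~\ref{thm6}) to manufacture a strictly plurisubharmonic weight and then H\"ormander to correct almost-holomorphic Cheeger--Colding coordinates. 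Theorem~\ref{thm3} then picks \emph{two} regular points of the cone, transplants those charts, and solves $\bar\partial$ with a weight singular at both points to get a nonconstant $f\in\mathcal O_P(M)$; the three-circle theorem controls the degree. No holomorphic data on $C(Y)$ is ever invoked.

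\medskip

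\textbf{Properness is the real difficulty, and your argument does not close.} You argue that the $f_i$ have ``nonvanishing leading term on the cone,'' hence $\sum|f_i|^2$ controls $r^{2d_i}$ along every sequence to infinity. But tangent cones at infinity are not unique: a function tailored to one blow-down sequence may degenerate along another. What is needed is a \emph{uniform} lower bound on $\sum|g_j|^2$ over $\partial B(p,r)$, with constants depending only on $(n,v)$ and not on the particular cone. The paper supplies this via Proposition~\ref{prop3}, proved by \emph{induction on the dimension $2s$ of the Euclidean factor $\mathbb R^{2s}$ split off by the cone}. The base case $2s=2n$ is Proposition~\ref{prop1}; for the inductive step one covers the ``bad'' set $E_i$ on $\partial B(q_i,\cdot)$ (where the $\mathbb R^{2s-2}$-coordinates are small) by balls whose tangent cones split an extra $\mathbb R^2$ factor (forced by the Cheeger--Colding--Tian parity argument), applies the inductive hypothesis there, and glues via carefully weighted $\bar\partial$-estimates with constants depending only on $(n,v)$ through Gromov compactness. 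Proposition~\ref{prop4} then runs a contradiction argument using this uniform estimate plus the three-circle theorem to prove $\sum|g_s|^2\ge c\,r^2$. Your compactness/contradiction sketch lacks this induction, and without it the argument does not produce the required uniformity.

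\medskip

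Your final algebraicity step via Bishop/Stoll--Chow is a legitimate alternative to the paper's route (which identifies $F_k(M)$ with the zero set of the ideal of polynomial relations among the $g_j$ and then iteratively enlarges $k$ to resolve the singular locus), but that step is not where the difficulty lies.
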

Here maximal volume growth means $Vol(B(p, r))\geq Cr^{2n}$ for some $C>0$.
This seems to be the first uniformization type result without assuming the curvature upper bound.

If one wishes to prove conjecture \ref{conj1} by considering $\mathcal{O}_P(M)$, it is important to know when $\mathcal{O}_P(M)\neq \mathbb{C}$.
In \cite{[N1]}, Ni proposed the following interesting conjecture:
\begin{conj}\label{conj4}
Let $M^n$ be a complete noncompact K\"ahler manifold with nonnegative bisectional curvature. Assume $M$ has positive bisectional curvature at one point $p$. Then the following three conditions are equivalent:

(1) $\mathcal{O}_P(M)\neq \mathbb{C}$;

(2)$M$ has maximal volume growth;

(3)  there exists a constant $C$ independent of $r$ so that $\dashint_{B(p, r)} S \leq \frac{C}{r^2}$. Here $S$ is the scalar curvature. $\dashint$ means the average.

\end{conj}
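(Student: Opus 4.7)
I would attempt the cyclic chain $(3)\Rightarrow(1)\Rightarrow(2)\Rightarrow(3)$. Two of these implications should rest on previously developed Poincar\'e-Lelong and heat-kernel techniques, while $(1)\Rightarrow(2)$ is the essential new step and should be attacked with the Gromov-Hausdorff machinery underlying Theorems \ref{thm1} and \ref{thm2}.

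For $(3)\Rightarrow(1)$ I would apply the Ni-Tam construction: the decay $\dashint_{B(p,r)}S\le C/r^{2}$ lets one solve $\sqrt{-1}\,\partial\bar\partial u=\mathrm{Ric}$ with $u$ of at most logarithmic growth; strict positivity of the bisectional curvature at $p$ makes $u$ strictly plurisubharmonic near $p$, and H\"ormander's $L^{2}$ estimate for $\bar\partial$ with weight $e^{-Nu}$ then produces non-constant elements of $\mathcal{O}_P(M)$. For $(2)\Rightarrow(3)$ I would combine Ni's heat-kernel monotonicity formula on K\"ahler manifolds of nonnegative bisectional curvature with the maximal volume growth hypothesis: the monotone quantity forces $r\,\dashint_{B(p,r)}S$ to be integrable at infinity, and the separate monotonicity of $r^{2}\dashint_{B(p,r)}S$ (coming from Bishop-Gromov together with a Chern-class computation) upgrades integrability to the pointwise bound $C/r^{2}$.

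The core difficulty is $(1)\Rightarrow(2)$. Fix a non-constant $f\in\mathcal{O}_{d}(M)$ and scales $r_{i}\to\infty$ along which $M(r_{i}):=\max_{B(p,r_{i})}|f|$ achieves full polynomial growth. Consider the rescaled data $(M,p,r_{i}^{-2}g)$ together with the normalized functions $\tilde f_{i}:=M(r_{i})^{-1}f$. By Cheeger-Colding a subsequence converges in the pointed Gromov-Hausdorff sense to a metric cone $(M_{\infty},p_{\infty},g_{\infty})$; gradient estimates for holomorphic functions under nonnegative bisectional curvature, together with the three-circle convexity of $\log M(r)$ as a function of $\log r$, bound the $\tilde f_{i}$ uniformly in $C^{0,1}_{\mathrm{loc}}$ with nontrivial oscillation, so a subsequential limit $\tilde f_{\infty}$ is a nonzero continuous function on $M_{\infty}$, holomorphic on the regular set in its inherited K\"ahler structure. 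If $M$ failed to have maximal volume growth then $M_{\infty}$ would have real Hausdorff dimension strictly less than $2n$; the plan is to generate, via H\"ormander $L^{2}$ estimates on each rescaling, enough polynomial-growth holomorphic functions on $M$ whose limits separate points of the regular part of $M_{\infty}$, and to combine the transcendence bound of Ni with the fact that positive bisectional curvature at $p$ prevents any isometric Euclidean factor from splitting off near $p_{\infty}$, so as to force the complex dimension of the regular part of $M_{\infty}$ to equal $n$, contradicting sub-maximal volume growth.

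The main obstacle is precisely this last step: rigorously equipping the possibly singular cone $M_{\infty}$ with enough complex-analytic structure to interpret $\tilde f_{\infty}$ as holomorphic, and transferring $L^{2}$ estimates and point-separation arguments from $M$ to the limit. These are exactly the technical issues that the heat-flow smoothing and $L^{2}$-extension techniques built in the course of proving Theorem \ref{thm2} are designed to resolve, and I expect $(1)\Rightarrow(2)$ to follow once those tools are in place.
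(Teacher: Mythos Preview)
Your cycle has a genuine gap at $(2)\Rightarrow(3)$. The claims that Ni's monotonicity forces $r\,\dashint_{B(p,r)}S$ to be integrable and that $r^{2}\dashint_{B(p,r)}S$ is separately monotone ``from Bishop--Gromov together with a Chern-class computation'' are not substantiated; no such monotonicity of the scaled average scalar curvature is available, and Ni's monotonicity formula concerns heat flow on plurisubharmonic potentials, not the integral of $S$. The paper explicitly declines this route: it notes that one \emph{could} try to show maximal volume growth implies average quadratic curvature decay, and then says ``We prove theorem \ref{thm3} by a different strategy.''

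You have also inverted which implication is the new contribution. In the paper's logic, $(1)\Leftrightarrow(3)$ is Ni and Ni--Tam, $(1)\Rightarrow(2)$ was already done in \cite{[L2]}, and the present paper supplies $(2)\Rightarrow(1)$ directly (Theorem~\ref{thm3}). Thus the Gromov--Hausdorff machinery here is deployed under the hypothesis of maximal volume growth, not to deduce it: one blows down $(M,p,r_i^{-2}g)$ to a noncollapsed metric cone, uses Cheeger--Colding estimates and the Ni--Tam heat flow to build a strictly plurisubharmonic weight of uniform quality on large balls $B(p_i,R)$, solves $\bar\partial$ via H\"ormander with that weight to produce nonconstant holomorphic functions with uniformly bounded $L^2$ norm, and then uses the three-circle theorem to pass to a polynomial-growth limit on $M$. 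Your sketch for $(1)\Rightarrow(2)$ is in the spirit of \cite{[L2]}, but note that the tools of Theorems~\ref{thm1} and~\ref{thm2} all presuppose noncollapsing and hence cannot, as you suggest, be invoked to establish it; the collapsed blow-down has strictly lower Hausdorff dimension and does not carry the K\"ahler structure needed for your $\tilde f_\infty$ argument without substantial further work.
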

In complex one dimensional case, the conjecture is known to hold, e.g., \cite{[LT2]}. For higher dimensions, 
Ni proved $(1)$ implies $(3)$ in \cite{[N1]}. The proof used the heat flow method. Then in \cite{[NT2]}, Ni and Tam proved that $(3)$ also implies $(1)$. Their proof employs the Poincare-Lelong equation and the heat flow method. Thus, it remains to prove $(1)$ and $(2)$ are equivalent. Under some extra conditions, Ni \cite{[N2]} and Ni-Tam \cite{[NT1]} were able to prove the equivalence of $(1)$ and $(2)$. In \cite{[L2]}, the author proved that $(1)$ implies $(2)$. In fact, the condition that $M$ has positive bisectional curvature at one point could be relaxed to that the universal cover of $M$ is not a product of two K\"ahler manifolds. 

In this paper, we prove that $(2)$ also implies $(1)$. Thus conjecture \ref{conj4} is solved in full generality.
More precisely, we prove
\begin{theorem}\label{thm3}
Let $(M^n, g)$ be a complete K\"ahler manifold with nonnegative bisectional curvature and maximal volume growth. Then there exists a nonconstant holomorphic function with polynomial growth on $M$.
\end{theorem}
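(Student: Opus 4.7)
The plan is to construct the required holomorphic function by transplanting a linear holomorphic function from a tangent cone at infinity of $M$ back to $M$ itself, using Gromov--Hausdorff convergence on the geometric side, H\"ormander's $L^{2}$ method for $\bar\partial$ on the analytic side, and a three-circle-type log-convexity estimate to propagate local control to a global polynomial bound.

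First I would fix $p\in M$ and form, along a sequence $r_{i}\to\infty$, the pointed Gromov--Hausdorff limit $(M_{\infty},p_{\infty})$ of the rescalings $(M, r_{i}^{-2}g, p)$. Nonnegative Ricci and maximal volume growth place us inside the Cheeger--Colding regime: $M_{\infty}$ is a metric cone $C(Y)$ with a dense smooth regular set $\mathcal{R}$, and Colding--Naber convergence is smooth on $\mathcal{R}$. In the K\"ahler setting with nonnegative bisectional curvature the pulled-back complex structures converge smoothly on $\mathcal{R}$ to a K\"ahler structure for which the radial field $r\partial_{r}$ is real-holomorphic. Using this holomorphic dilation I would produce a nonconstant holomorphic function $f_{\infty}$ on $\mathcal{R}$ which is an eigenfunction of the dilation with eigenvalue one, normalized so that $f_{\infty}(p_{\infty})=0$ and $c\le\sup_{B(p_{\infty},1)\cap\mathcal{R}}|f_{\infty}|\le C$; concretely, such a function can be obtained by solving a weighted $\bar\partial$-problem on the cone after separating variables along the radial direction.

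Next I would pull $f_{\infty}$ back via Gromov--Hausdorff $\varepsilon_{i}$-approximations, smoothly cut off away from the singular set of $M_{\infty}$, and re-cut off on $M$, to obtain a smooth $\chi_{i}$ supported in an annulus $A_{i}=B(p, Rr_{i})\setminus B(p, R^{-1}r_{i})\subset M$ with the property that $\|\bar\partial\chi_{i}\|_{L^{2}}$ is much smaller than $\|\chi_{i}\|_{L^{2}}$ (measured in a weighted norm on $A_{i}$). Applying H\"ormander's $L^{2}$ estimate on $M$ with a plurisubharmonic weight of the form $(n+\alpha)\log(1+d(p,\cdot)^{2})$, which is plurisubharmonic thanks to nonnegative bisectional curvature, solves $\bar\partial u_{i}=\bar\partial\chi_{i}$ with $u_{i}$ controlled in weighted $L^{2}$. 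The function $f_{i}:=\chi_{i}-u_{i}$ is then holomorphic on $M$, has polynomial growth controlled by the weight, and on $A_{i}$ differs from $\chi_{i}$ by a quantity much smaller than $|\chi_{i}|$ itself. Finally I would invoke a three-circle theorem for holomorphic functions on complete K\"ahler manifolds with nonnegative bisectional curvature (log-convexity of $r\mapsto\sup_{B(p,r)}|f_{i}|$ in $\log r$) to convert the annulus bound into a uniform degree-one estimate $\sup_{B(p,r)}|f_{i}|\le Cr$ for all $r>0$, and symmetrically to propagate a lower bound $\sup_{B(p, r_{i})}|f_{i}|\ge cr_{i}$ from $A_{i}$. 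A normal-families/diagonalization argument then extracts a subsequential limit $f\in\mathcal{O}_{1}(M)$ which is nonconstant because $f(p)=0$ while $\sup_{B(p, r_{i})}|f|\ge cr_{i}\to\infty$.

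The main obstacle is the middle step: one must show the H\"ormander correction $u_{i}$ is \emph{strictly subdominant} to $\chi_{i}$ on the scale $r_{i}$, for otherwise the cancellation $\chi_{i}-u_{i}$ destroys the nonconstant part that $\chi_{i}$ inherited from $f_{\infty}$. This requires, on the one hand, sufficiently smooth Gromov--Hausdorff convergence of the complex structures on $\mathcal{R}\subset M_{\infty}$ so that $\|\bar\partial\chi_{i}\|$ is genuinely small, and, on the other hand, the precise scale-invariant three-circle estimate to convert global weighted $L^{2}$ control on $u_{i}$ into pointwise smallness on the specific annulus where $\chi_{i}$ is active. The subsidiary delicate point is the construction of the degree-one eigenfunction $f_{\infty}$ on what is a priori only a singular K\"ahler cone with an $L^{2}$ regular set; this again is handled by H\"ormander on the cone, using the holomorphicity of $r\partial_{r}$ to decouple the radial and spherical directions.
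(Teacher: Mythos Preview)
Your proposal has a central gap: the weight $(n+\alpha)\log(1+d(p,\cdot)^2)$ is \emph{not} known to be plurisubharmonic. Nonnegative bisectional curvature gives no pointwise lower bound on the complex Hessian of $d(p,\cdot)^2$; Hessian comparison under nonnegative curvature goes the other way, bounding $\nabla^2 r^2$ from above. Producing a strictly plurisubharmonic function of logarithmic growth on a large domain is precisely the hard analytic step here, and the paper spends most of the proof on it. The mechanism is: use Cheeger--Colding theory to build, on each rescaled ball $B(p_i,5R)$, a smooth $\rho_i$ with $\nabla^2\rho_i$ close to $g_i$ only in $L^2$ (equations~(\ref{eq52})--(\ref{eq57})), then run the Ni--Tam heat flow (Theorem~\ref{thm6}) for one unit of time to upgrade this to a pointwise lower bound $(v_{i,1})_{\alpha\overline\beta}\ge c(n,v)g_{\alpha\overline\beta}$ (Proposition~\ref{prop2}). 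Without a valid weight you cannot invoke H\"ormander at all, and the rest of your outline never gets off the ground.

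A second, related problem: you assume smooth convergence of the complex structures on the regular set $\mathcal{R}$ of the tangent cone and propose to build $f_\infty$ by H\"ormander on the cone. With only a lower Ricci bound and noncollapsing you do not get $C^\infty$ convergence on $\mathcal{R}$; you get the $L^2$ Hessian estimates of Cheeger--Colding, nothing more. Running H\"ormander on the singular cone would in any case require the same missing weight, so this relocates the difficulty rather than resolving it. The paper avoids working on the cone entirely: it chooses two regular points $y_0,z_0$ on the unit sphere of the cone, transfers them to $y_i,z_i\in M_i$, uses Proposition~\ref{prop1} (whose proof already contains a heat-flow weight construction on Euclidean-like balls) to get local holomorphic charts there, adds logarithmic singularities at $y_i,z_i$ to the weight, and solves $\overline\partial$ on a bounded Stein domain $\Omega_i\subset M_i$ to obtain $\mu_i$ with $\mu_i(y_i)=1$, $\mu_i(z_i)=0$. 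Only after this does the three-circle theorem enter, essentially as you describe, to pass from a uniform ratio $M_i'(2)/M_i'(1)\le C(n,\alpha,\delta_0)$ to a polynomial bound and extract a limit.
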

This theorem might be compared with the following open question in Riemannian geometry:
\begin{conj}
Let $M^n$ be a complete noncompact Riemannian manifold with nonnegative Ricci curvature and maximal volume growth. Then there exists a nonconstant polynomial growth harmonic function.
\end{conj}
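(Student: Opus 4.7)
The plan is to exploit the structure of tangent cones at infinity from Cheeger-Colding theory. Under nonnegative Ricci curvature and maximal volume growth, every tangent cone at infinity $(M_\infty,d_\infty,p_\infty)$ of $M$ is a metric measure cone $C(Y)$ over a compact geodesic space $Y$ of diameter at most $\pi$; the strict positivity of the asymptotic volume ratio forces $Y$ to be nontrivial (in particular not a single point) and to carry a well-defined Cheeger energy. This would be the starting point.

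On the cone $C(Y)$ one writes down explicit harmonic functions by separation of variables. If $\varphi$ is a nontrivial eigenfunction of the Cheeger Laplacian on $Y$ with eigenvalue $\lambda>0$, then
\[
u_\infty(r,\theta)=r^\alpha\varphi(\theta),\qquad \alpha(\alpha+n-2)=\lambda,
\]
is harmonic on $C(Y)\setminus\{p_\infty\}$ and has polynomial growth of degree $\alpha>0$. The existence of such a $\varphi$ is supplied by spectral theory on the compact $\mathrm{RCD}(n-2,n-1)$ cross-section $Y$ together with the fact that $Y$ has positive $(n-1)$-dimensional Hausdorff measure, so $\lambda_1(Y)>0$.

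The core analytic step is to transfer $u_\infty$ back to $M$. Along a blow-down sequence $r_i\to\infty$ for which $(M,r_i^{-2}g,p)\to C(Y)$ in the pointed measured Gromov-Hausdorff sense, I would solve, on annuli $A_i=B(p,Rr_i)\setminus B(p,R^{-1}r_i)\subset M$, Dirichlet problems whose boundary data are obtained by pulling back $u_\infty$ through a Gromov-Hausdorff $\epsilon$-approximation and then rescaling by $r_i^\alpha$ to preserve the polynomial-growth scaling. Using $W^{1,2}$-convergence of harmonic functions under pointed measured Gromov-Hausdorff convergence (Cheeger-Colding, Honda), the segment inequality and Poincar\'e inequality on annuli, and Cheng-Yau gradient estimates, the normalized solutions $v_i$ should converge, after $L^2$-renormalization and extraction, to $u_\infty$ on $C(Y)$. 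A diagonal argument as $R\to\infty$ and $i\to\infty$ would then produce a nonconstant global harmonic function on $M$ of polynomial growth of order at most $\alpha$.

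The hard part is the non-uniqueness of tangent cones at infinity, shown by examples of Perelman and Colding-Naber: different blow-down sequences can yield genuinely different cones $C(Y_j)$, so the candidate harmonic data on distinct cones need not patch consistently across scales, and the diagonal extraction above may fail to produce a coherent global function. Overcoming this appears to require either a stability statement for the eigendata of the cross-sections along the blow-down sequence, or a direct frequency/monotonicity construction on $M$ --- in the spirit of Colding-Minicozzi's work on Yau's finite-dimensionality conjecture --- producing a polynomial-growth harmonic function without committing to a single tangent cone. Since the statement appears in the excerpt as an open question, I do not expect this obstacle to be surmountable by the existing Cheeger-Colding package alone, and anticipate that a complete proof would need genuinely new rigidity or monotonicity input.
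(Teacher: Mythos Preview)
The paper does not prove this statement: it is explicitly introduced as ``the following open question in Riemannian geometry'' immediately after Theorem~\ref{thm3}, and no proof is offered anywhere in the paper. You correctly recognize this in your final paragraph. There is therefore nothing to compare your proposal against.

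That said, a brief comment on why the paper's own method for the K\"ahler analogue (Theorem~\ref{thm3}) does not transfer is worthwhile, since your sketch is in the same Cheeger--Colding spirit. The paper's proof of Theorem~\ref{thm3} avoids the tangent-cone non-uniqueness obstruction you identify by using tools that have no Riemannian counterpart: it produces \emph{holomorphic} functions on large balls via H\"ormander's $L^2$ estimate with a plurisubharmonic weight built from the Ni--Tam heat flow (Theorem~\ref{thm6}), and then invokes the three circle theorem (Theorem~\ref{thm8}) --- a \emph{monotonicity} statement for holomorphic functions on manifolds with nonnegative holomorphic sectional curvature --- to control the growth of these locally constructed functions uniformly across all scales simultaneously. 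It is precisely this monotonicity that substitutes for the ``stability of eigendata across scales'' or ``frequency/monotonicity'' input you flag as missing in the Riemannian case. No analogue of the three circle theorem is known for harmonic functions under nonnegative Ricci curvature alone, and this is exactly why the Riemannian conjecture remains open while the K\"ahler version is settled here.

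Your outline of separating variables on the cone and transplanting via Gromov--Hausdorff approximations is a reasonable heuristic, and your identification of tangent-cone non-uniqueness as the core obstruction is accurate; the Perelman and Colding--Naber examples show this is a genuine issue. Your closing assessment --- that new rigidity or monotonicity input beyond the existing Cheeger--Colding package would be needed --- aligns with the paper's implicit stance in presenting the statement as open.
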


The strategy of the proofs in this paper is very different from earlier works. Here we make use of several different techniques: the Gromov-Hausdorff convergence theory developed by Cheeger-Colding \cite{[CC1]}\cite{[CC2]}\cite{[CC3]}\cite{[CC4]}, Cheeger-Colding-Tian \cite{[CCT]}; the heat flow method by Ni \cite{[N1]} and Ni-Tam \cite{[NT1]}\cite{[NT2]}; the Hormander $L^2$-technique \cite{[Ho]}\cite{[D]} ; the three circle theorem \cite{[L1]}.

The first key point is to prove theorem \ref{thm3}. By Hormander's $L^2$-technique, to produce holomorphic functions of polynomial growth, it suffices to construct strictly plurisubharmonic function of logarithmic growth. However, it is not obvious how to construct such function by only assuming the maximal volume growth condition. In \cite{[MSY]}\cite{[Mo]}, Mok-Siu-Yau and Mok considered the Poincare-Lelong equation $\sqrt{-1}\partial\overline\partial u = Ric$. When the curvature has pointwise quadratic decay, they were able to prove the existence of a solution with logarithmic growth. Later, Ni and Tam \cite{[NT1]}\cite{[NT2]} were able to relax the condition to that the curvature has average quadratic decay. Then it suffices to prove that maximal volume growth implies the average curvature decay. 

We prove theorem \ref{thm3} by a different strategy. We first blow down the manifold. Then by using the Cheeger-Colding theory, heat flow technique and Hormander $L^2$ theory, we construct holomorphic functions with controlled growth in a sequence of exhaustion domains on $M$. Then three circle theorem ensures that we can take subsequence to obtain a nonconstant holomorphic function with polynomial growth.

Once theorem \ref{thm3} is proved, Hormander's $L^2$ technique produces a lot of holomorphic functions of polynomial growth. It turns out $\mathcal{O}_P(M)$ separates points and tangent spaces on $M$. However, since the manifold is not compact, it does not follow directly that $M$ is affine algebraic. In Mok's paper \cite{[Mo]}, the proof of this part took more than $35$ pages, even with the additional assumption that curvature has pointwise quadratic decay.

In our case, there is a serious difficulty to prove that the map given by $\mathcal{O}_d(M)$ is proper.
We overcome this difficulty in theorem \ref{thm4}. Again, the idea is new. We will apply the induction on the dimension of splitting factor for a tangent cone. All techniques above are employed. 

Once we prove the properness of the holomorphic map, it is straightforward to prove $M$ is affine algebraic by using techniques from complex analytic geometry. Here the argument resembles some part in \cite{[DS]}.
Then we conclude conjecture \ref{conj2} when the manifold has maximal volume growth.
For the general case, we apply the main result in \cite{[L2]}. It suffices to handle the case when the universal cover of the manifold splits. Then we need to consider group actions. The final result follows from an algebraic result of Nagata \cite{[Na]}.

This paper is organized as follows. In section $2$, we collect some preliminary results necessary for this paper. In section $3$, we prove a result which controls the size of a holomorpic chart when the manifold is Gromov-Hausdorff close to an Euclidean ball.  As the first application, we prove in section $4$ a gap theorem for the complex structure of $\mathbb{C}^n$. Section $5$ deals with the proof of theorem \ref{thm3}. The proof of theorem \ref{thm4} is contained in section $6$. Finally, the proof of theorem \ref{thm1} is given section $7$.

There are two appendices. For appendix $A$, we present a result of Ni-Tam in \cite{[NT1]} which was not stated explicitly (here we are not claiming any credits).
In appendix $B$, we introduce some results of Nagata \cite{[Na]} to conclude the proof of the main theorem.

\bigskip

\begin{center}
\bf  {\quad Acknowledgment}
\end{center}
The author would like to express his deep gratitude to Professors John Lott, Lei Ni, Jiaping Wang for many valuable discussions during the work. He thanks Professor Xinyi Yuan for some discussions on algebra. He also thanks Professors Peter Li, Luen-Fai Tam and Shing-Tung Yau for the interest in this work.

\section{\bf{Preliminary results}}

First recall some convergence results for manifolds with Ricci curvature lower bound. 
Let $(M^n_i, y_i, \rho_i)$ be a sequence of pointed complete Riemannian manifolds, where $y_i\in M^n_i$ and $\rho_i$ is the metric on $M^n_i$. By Gromov's compactness theorem, if $(M^n_i, y_i, \rho_i)$ have a uniform lower bound of the Ricci curvature, then a subsequence converges to some $(M_\infty, y_\infty, \rho_\infty)$ in the Gromov-Hausdorff topology. See \cite{[G]} for the definition and basic properties of Gromov-Hausdorff convergence.
\begin{definition}
Let $K_i\subset M^n_i\to K_\infty\subset M_\infty$ in the Gromov-Hausdorff topology. Assume $\{f_i\}_{i=1}^\infty$ are functions on $M^n_i$, $f_\infty$ is a function on $M_\infty$.  
$\Phi_i$ are $\epsilon_i$-Gromov-Hausdorff approximations, $\lim\limits_{i\to\infty} \epsilon_i = 0$. If $f_i\circ \Phi_i$ converges to $f_\infty$ uniformly, we say $f_i\to f_\infty$ uniformly over $K_i\to K_\infty$.
\end{definition}
 In many applications, $f_i$ are equicontinuous. The Arzela-Ascoli theorem applies to the case when the spaces are different.  When $(M_i^n, y_i, \rho_i)\to (M_\infty, y_\infty, \rho_\infty)$ in the Gromov-Hausdorff topology, any bounded, equicontinuous sequence of functions $f_i$ has a subsequence converging uniformly to some $f_\infty$ on $M_\infty$.

Let the complete pointed metric space $(M_\infty^m, y)$ be the Gromov-Hausdorff limit of a sequence of connected pointed Riemannian manifolds, $\{(M_i^n, p_i)\}$, with $Ric(M_i)\geq 0$. Here $M_\infty^m$ has Haudorff dimension $m$ with $m\leq n$. A tangent cone at $y\in M_\infty^m$ is a complete pointed Gromov-Hausdorff limit $((M_\infty)_y, d_\infty, y_\infty)$ of $\{(M_\infty, r_i^{-1}d, y)\}$, where $d, d_\infty$ are the metrics of $M_\infty, (M_\infty)_y$ respectively, $\{r_i\}$ is a positive sequence converging to $0$.

\begin{definition}
A point $y\in M_\infty$ is called regular, if there exists some $k$ so that every tangent cone at $y$ is isometric to $\mathbb{R}^k$. A point is called singular, if it is not regular.
\end{definition}
In \cite{[CC2]}, the following theorem was proved: 
\begin{theorem}\label{thm5}
Regular points are dense in the Gromov-Haudorff limit of manifolds with Ricci curvature lower bound.
\end{theorem}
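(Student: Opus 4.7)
My plan is to follow the Cheeger--Colding strategy of analyzing tangent cones through an iterated splitting procedure. First, I would pass Bishop--Gromov volume comparison to the limit to obtain, at each $y \in M_\infty$, a monotone non-increasing (renormalized) volume ratio $V_y(r)$. Because $V_y$ is bounded and monotone, $V_y(0^+) := \lim_{r \to 0} V_y(r)$ exists, and from volume convergence one deduces that at $\underline{\nu}$-almost every $y$ (where $\underline{\nu}$ is the renormalized limit measure) and for almost every pair of scales $r_1 < r_2$, $V_y$ is almost constant between $r_1$ and $r_2$; equivalently, the annulus $A_{r_1, r_2}(y)$ is an almost volume cone.

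The crucial input is the theorem that an almost volume cone is an almost metric cone in the Gromov--Hausdorff sense. Passing this statement through the blow-up $r \to 0$, every tangent cone $(M_\infty)_y$ at such a density point is forced to be an exact metric cone $C(Z_y)$. Hence on a set of full $\underline{\nu}$-measure, all tangent cones are metric cones. Now I would iterate the Cheeger--Colding splitting theorem: a noncollapsed Ricci-nonnegative limit containing a line splits isometrically as $\mathbb{R} \times N$, and a metric cone contains a line through its vertex iff it splits off an $\mathbb{R}$ factor. Let $k$ be the largest integer such that some tangent cone at some point of $M_\infty$ splits off $\mathbb{R}^k$. Pick a point $y$ realizing this maximum with tangent cone $\mathbb{R}^k \times C(Z_y)$ and a generic point $z$ in the cross-section; blowing up at $z$ and invoking maximality together with the splitting theorem, the resulting tangent cone can split off no further factor and must therefore be $\mathbb{R}^k$ itself. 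So $z$ is a regular point of the cross-section. Pulling this back through the blow-down correspondence produces regular points of $M_\infty$, and because $\underline{\nu}$ has positive mass on every nonempty open set, density of regular points follows.

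The main obstacle is the almost-volume-cone-implies-almost-metric-cone theorem itself, which is the technical heart of \cite{[CC2]}. It requires constructing approximately radial, harmonic-like functions with $L^2$-controlled Hessians on annuli, and then using the segment inequality to convert integral Hessian bounds into pointwise metric control. Transporting such estimates through Gromov--Hausdorff convergence, in the absence of two-sided sectional curvature bounds, relies on the harmonic approximation and $\varepsilon$-splitting machinery of Cheeger--Colding; this is where the real work lies.
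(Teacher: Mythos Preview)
The paper does not prove this theorem at all: it is stated in Section~2 as a preliminary result and attributed to Cheeger--Colding \cite{[CC2]} without argument. So there is no proof in the paper to compare your proposal against.

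As a sketch of the Cheeger--Colding argument your outline is in the right spirit for the first half (volume convergence, monotonicity of the volume ratio, and the almost-volume-cone $\Rightarrow$ almost-metric-cone theorem), but the second half has a gap. Your step ``pick a point $y$ realizing the maximal splitting $\mathbb{R}^k \times C(Z_y)$, blow up at a generic $z$ in the cross-section, and pull this back through the blow-down correspondence to produce regular points of $M_\infty$'' does not work as stated: a tangent cone at a point $z$ of a tangent cone $(M_\infty)_y$ is \emph{not} in general a tangent cone of $M_\infty$ at any point, so there is no direct ``pull-back'' mechanism from regular points of the iterated tangent cone to regular points of $M_\infty$. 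The actual argument in \cite{[CC2]} proceeds differently: one shows that the regular set has full $\underline{\nu}$-measure by combining volume convergence with an $\epsilon$-regularity statement (a ball with volume sufficiently close to the Euclidean value is Gromov--Hausdorff close to a Euclidean ball), and density then follows because $\underline{\nu}$ charges every open set. The iterated-splitting picture you describe is closer to how one analyzes the \emph{stratification} of the singular set, not how one establishes density of the regular set.
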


\medskip

Results of heat flow on K\"ahler manifolds by Ni-Tam \cite{[NT1]}:
\begin{theorem}\label{thm6}
Let $M^n$ be a complete noncompact K\"ahler manifold with nonnegative bisectional curvature. Let $u$ be a smooth function on $M$ with compact support. Let $$v(x, t) =\int_MH(x, y, t)u(y)dy.$$ Here $H(x, y, t)$ is the heat kernel of $M$. Let $\eta(x, t)_{\alpha\overline\beta} = v_{\alpha\overline\beta}$ and $\lambda(x)$ be the minimum eigenvalue for $\eta(x, 0)$. Let $$\lambda(x, t) = \int_M H(x, y, t)\lambda(y)dy.$$ Then $\eta(x, t)-\lambda(x, t)g_{\alpha\overline{\beta}}$ is a nonnegative $(1, 1)$ tensor for $t\in [0, T]$ for $T>0$.
\end{theorem}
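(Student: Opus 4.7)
The plan is to apply Hamilton's tensor maximum principle to the Hermitian tensor $\psi_{\alpha\overline\beta} := \eta_{\alpha\overline\beta}(x,t) - \lambda(x,t)\, g_{\alpha\overline\beta}$, using K\"ahler geometry to ensure the reaction term preserves the positive cone.

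First I would derive the evolution equation for $\eta_{\alpha\overline\beta}$. Differentiating $\partial_t v = \Delta v$ and commuting covariant derivatives via the K\"ahler Bochner identities yields
\begin{equation*}
\left(\partial_t - \Delta\right)\eta_{\alpha\overline\beta} = R_{\alpha\overline\beta\gamma\overline\delta}\, \eta_{\delta\overline\gamma} - \frac{1}{2}\bigl(R_{\alpha\overline p}\, \eta_{p\overline\beta} + R_{p\overline\beta}\, \eta_{\alpha\overline p}\bigr).
\end{equation*}
By construction $\lambda(x,t)$ solves the scalar heat equation, and the K\"ahler metric $g$ is parallel, so $(\partial_t - \Delta)(\lambda g_{\alpha\overline\beta}) = 0$. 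One then checks that the $\lambda g$ contributions to the two reaction terms cancel exactly: the bisectional-curvature term contributes $+\lambda R_{\alpha\overline\beta}$ (from tracing $R_{\alpha\overline\beta\gamma\overline\delta}\, g^{\delta\overline\gamma}$), while the Ricci-type correction contributes $-\lambda R_{\alpha\overline\beta}$. Therefore $\psi$ satisfies the same reaction equation with $\eta$ replaced by $\psi$ throughout.

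Next I would apply the tensor maximum principle. At $t = 0$, $\psi \ge 0$ by the definition of $\lambda(x)$. Suppose that $\psi$ first develops a null eigendirection $V$ at some $(x_0, t_0)$. Choose a unitary frame diagonalizing $\psi$ at $(x_0, t_0)$ with $V = e_1$, $\psi_{k\overline k} = \mu_k \ge 0$, and $\mu_1 = 0$. Then the reaction term contracted against $V^\alpha \overline V^\beta$ reduces to
\begin{equation*}
\sum_{\gamma} R_{1\overline 1 \gamma \overline\gamma}\, \mu_\gamma,
\end{equation*}
since the Ricci-type terms involve $\psi_{p\overline 1} = \mu_1 \delta_{p1} = 0$. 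This is nonnegative by the nonnegativity of the bisectional curvature and of the $\mu_\gamma$, contradicting any strict infinitesimal decrease of the lowest eigenvalue of $\psi$. A standard Hamilton-type perturbation, replacing $\psi$ by $\psi + \epsilon(t+1) g$ to force strict positivity and then letting $\epsilon \to 0$, makes the argument rigorous.

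The hard part is twofold. First, producing the evolution equation with the correct Ricci correction terms requires a careful K\"ahler commutator computation; it is precisely this cancellation that exploits the nonnegativity of bisectional curvature, as opposed to just Ricci. Second, since $M$ is noncompact, the infimum of the smallest eigenvalue of $\psi + \epsilon(t+1)g$ need not be attained. One overcomes this by exploiting that $u$ has compact support: classical heat-kernel estimates on complete manifolds with $\mathrm{Ric}\ge 0$ give uniform bounds and spatial decay for $v$, $\nabla v$, and $\nabla^2 v$ on $[0, T]$, so the infimum is attained or occurs at $t = 0$, and the pointwise argument applies. Alternatively an Omori-Yau-type maximum principle, or a cutoff comparison on large geodesic balls followed by a limiting argument, completes the proof.
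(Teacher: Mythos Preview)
Your plan is essentially the paper's proof: the same evolution equation for $\eta$, the same observation that the $\lambda g$ contribution to the reaction cancels, the same null-eigenvector computation showing the reaction is nonnegative under nonnegative bisectional curvature, and the same perturb-then-apply-maximum-principle conclusion.

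The one place where your sketch is imprecise is the noncompact step. Spatial decay of $\eta=\nabla^{1,1}v$ does \emph{not} follow from ``classical heat-kernel estimates'': Li--Yau bounds control $v$ and (via Cheng--Yau) $\nabla v$, but not the Hessian without a curvature upper bound, which you do not have. The paper handles this differently. It first establishes an a priori integral growth bound $\int_0^T\int_{B(p,r)}\|\eta\|^2e^{-ar^2}<\infty$ by energy estimates (Bochner plus cutoff integration by parts), then uses this to show $\|\eta\|$ is a subsolution of the scalar heat equation, so that $\|\eta\|(x,t)\le h(x,t):=\int H(x,y,t)\|\eta(y,0)\|\,dy$; since $\eta(\cdot,0)$ has compact support, $h$ decays at infinity. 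With this decay in hand the paper uses the barrier $\epsilon\phi$ with $\phi(x,t)=e^t\int H(x,y,t)e^{r(y)}\,dy$, which satisfies $(\partial_t-\Delta)\phi=\phi$ and grows like $e^{c_1 r}$, in place of your $\epsilon(t+1)$. Once the decay of $\|\eta\|$ and $|\lambda|$ is established, either barrier works; the substantive point you are missing is the subsolution argument for $\|\eta\|$, not the choice of perturbation.
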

A detailed proof of this theorem is presented in appendix $A$.

\medskip

Hormander $L^2$ theory:
\begin{theorem}\label{thm7}
Let $(X, \omega)$ be a connected but not necessarily complete K\"ahler manifold with $Ric\geq 0$. Assume $X$ is Stein. Let $\varphi$ be a $C^\infty$ function on $X$ with $\sqrt{-1}\partial\overline\partial\varphi \geq c\omega$ for some positive function $c$ on $X$. Let $g$ be a smooth $(0, 1)$ form satisfying $\overline\partial g = 0$ and $\int_X\frac{|g|^2}{c}e^{-\varphi}\omega^n<+\infty$, then there exists a smooth function $f$ on $X$ with $\overline\partial f = g$ and $\int_X |f|^2e^{-\varphi}\omega^n\leq \int_X\frac{|g|^2}{c}e^{-\varphi}\omega^n$.\end{theorem}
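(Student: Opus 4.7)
The plan is to follow the standard H\"ormander-Demailly scheme: reduce the existence of $f$ satisfying $\overline\partial f=g$ with the prescribed $L^2$ bound to an a priori estimate of Bochner-Kodaira-Nakano type, and then close the argument by Hahn-Banach and elliptic regularity.

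The first issue is that $\omega$ need not be complete. To handle this, I would exhaust $X$ by a nested sequence of relatively compact pseudoconvex subdomains $X_j\subset\subset X$, available because $X$ is Stein, and on each $X_j$ replace $\omega$ by a complete K\"ahler metric of the form $\omega_\varepsilon=\omega+\varepsilon\,\sqrt{-1}\partial\overline\partial\chi(\psi)$, where $\psi$ is a strictly plurisubharmonic exhaustion of $X_j$ and $\chi$ is a convex increasing function chosen so the perturbed metric is complete. After solving the $\overline\partial$-problem on $(X_j,\omega_\varepsilon)$ against the original weight $\varphi$ with the claimed bound, a diagonal argument together with weak compactness in $L^2(X,e^{-\varphi}\omega^n)$ lets $\varepsilon\to 0$ and $j\to\infty$, producing a global solution satisfying the estimate in the original metric.

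Under the temporary assumption of completeness, the heart of the argument is the a priori estimate
\begin{equation*}
\int_X c\,|\alpha|^2\, e^{-\varphi}\omega^n\ \le\ \|\overline\partial\alpha\|_\varphi^2+\|\overline\partial^*_\varphi\alpha\|_\varphi^2
\end{equation*}
for every smooth compactly supported $(0,1)$-form $\alpha$. This is exactly where the K\"ahler condition, $Ric\ge 0$, and $\sqrt{-1}\partial\overline\partial\varphi\ge c\omega$ all enter. The Bochner-Kodaira-Nakano identity for $(0,1)$-forms on a K\"ahler manifold with weight $e^{-\varphi}$ expresses the right-hand side as a nonnegative covariant gradient term plus a curvature term $\int_X\langle B\alpha,\alpha\rangle e^{-\varphi}\omega^n$, where $B$ is a nonnegative combination of $\sqrt{-1}\partial\overline\partial\varphi$ and $Ric$; the hypothesis $\sqrt{-1}\partial\overline\partial\varphi\ge c\omega$ then forces $\langle B\alpha,\alpha\rangle\ge c|\alpha|^2$ pointwise. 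Completeness is used precisely here so that smooth compactly supported $(0,1)$-forms are a core for $\overline\partial^*_\varphi$, allowing the integration by parts in the BKN identity to go through without boundary terms.

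To conclude, the linear functional $L(\overline\partial^*_\varphi\alpha):=\langle\alpha,g\rangle_\varphi$ is well defined on $\mathrm{Range}(\overline\partial^*_\varphi)$ (the condition $\overline\partial g=0$ eliminates dependence on the $\ker\overline\partial$-component of $\alpha$); by Cauchy-Schwarz and the a priori estimate,
\begin{equation*}
|L(\overline\partial^*_\varphi\alpha)|^2\ \le\ \Bigl(\int_X\frac{|g|^2}{c}e^{-\varphi}\omega^n\Bigr)\cdot\|\overline\partial^*_\varphi\alpha\|_\varphi^2.
\end{equation*}
Extending $L$ by Hahn-Banach and applying Riesz representation produces $f\in L^2(X,e^{-\varphi}\omega^n)$ with $\overline\partial f=g$ distributionally and the desired norm bound; interior elliptic regularity for $\overline\partial$ on functions then upgrades $f$ to smooth. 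The main obstacle throughout is the potential noncompleteness of $\omega$, which is what forces the approximation and diagonal extraction step; once a complete K\"ahler metric is in hand, the K\"ahler identities together with Hahn-Banach carry the rest of the proof essentially automatically.
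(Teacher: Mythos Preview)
Your proposal is correct and follows the standard H\"ormander--Demailly scheme. The paper does not supply its own proof of this statement; it simply cites \cite{[D]}, pages 38--39, and Lemma~4.4.1 of \cite{[Ho]}, and your sketch is precisely the argument found there (Bochner--Kodaira--Nakano on $(0,1)$-forms, the curvature term being $\mathrm{Ric}+\sqrt{-1}\partial\overline\partial\varphi\ge c\omega$, the functional-analytic step via Hahn--Banach/Riesz, and the approximation by complete K\"ahler metrics on a Stein exhaustion to handle the possible incompleteness of $\omega$).
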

The proof can be found in \cite{[D]}, page 38-39. Also compare lemma 4.4.1 in \cite{[Ho]}.

\medskip
Three circle theorem in \cite{[L1]}:
\begin{theorem}\label{thm8}
Let $M$ be a complete noncompact K\"ahler manifold with nonnegative holomorphic sectional curvature, $p\in M$. Let $f$ be a holomorphic function on $M$. Let $M(r) = \max\limits_{B(p, r)}|f(x)|$. Then $\log M(r)$ is a convex function of $\log r$. Therefore, given any $k>1$, $\frac{M(kr)}{M(r)}$ is monotonic increasing.
\end{theorem}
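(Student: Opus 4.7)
I would establish the log-convexity of $\log M(r)$ in $\log r$ via a maximum-principle comparison on annular domains, in the spirit of the classical Hadamard three circle theorem. Fix $0 < r_1 < r_3$ and choose $a, b \in \mathbb{R}$ so that $\log M(r_i) = a\log r_i + b$ for $i = 1, 3$. Log-convexity then reduces to showing, for every $r_2 \in (r_1, r_3)$, the pointwise estimate
$$\log|f(x)| \leq a\log d(p, x) + b \qquad \text{for all } x \in A := \{r_1 \leq d(p, \cdot) \leq r_3\}.$$
This inequality holds on $\partial A$ by construction, since $|f|$ is subharmonic, so $M(r) = \max_{\partial B(p, r)}|f|$. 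The task is to propagate it to the interior. Once log-convexity is proven, the monotonicity of $M(kr)/M(r)$ in $r$ is immediate: if $h(s) := \log M(e^s)$ is convex, then $s \mapsto h(s + \log k) - h(s)$ is non-decreasing.

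\textbf{Propagation step.} Let $u(x) := \log|f(x)| - a\log d(p, x) - b$. Since $f$ is holomorphic, $\log|f|$ is plurisubharmonic, so $\sqrt{-1}\,\partial\bar\partial \log|f|^2 \geq 0$, with equality off the zero divisor of $f$. The key geometric input is a matching complex Hessian bound on the distance function:
$$\sqrt{-1}\,\partial\bar\partial\, d(p, \cdot)^2 \leq \omega$$
in the barrier sense away from the cut locus. This is the counterpart of the Euclidean identity $\sqrt{-1}\,\partial\bar\partial|z|^2 = \omega_{\mathbb{C}^n}$ and is obtained from the hypothesis of nonnegative holomorphic sectional curvature by a second-variation computation along complex perturbations of a minimizing geodesic in the $J\dot\gamma$ direction. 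Dividing out, one obtains an upper bound on $\sqrt{-1}\,\partial\bar\partial \log d(p, \cdot)$ matching the Euclidean expression. Regularizing $\log|f|$ by $\tfrac12 \log(|f|^2 + \varepsilon)$ and $\log d$ by $\tfrac12 \log(d^2 + \varepsilon)$ to smooth out the singular loci, one checks that $u$ is a subsolution of an elliptic operator on $A$ (with $u \leq 0$ on $\partial A$), and the standard maximum principle (equivalently a Bedford--Taylor comparison argument) yields $u \leq 0$ throughout $A$. Passing $\varepsilon \to 0$ completes the proof.

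\textbf{Main obstacle.} The principal difficulty is establishing the complex Hessian comparison $\sqrt{-1}\,\partial\bar\partial\, d(p,\cdot)^2 \leq \omega$ using only nonnegative \emph{holomorphic} sectional curvature, a strictly weaker hypothesis than sectional curvature $\geq 0$ (under which the bound would follow from the standard real Hessian comparison). The idea is to exploit the complex structure: the $(1,1)$-component of the Hessian of $d^2$ in the direction of a unit tangent $X$ is controlled by the second variation of arclength along a Jacobi field built from $X$ and $JX$, whose curvature contribution is $R(\dot\gamma, J\dot\gamma, \dot\gamma, J\dot\gamma)$, i.e., the holomorphic sectional curvature of the complex plane along $\dot\gamma$. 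Handling the cut locus requires a standard barrier argument, and the singularity at $p$ and at the zeros of $f$ is addressed by the $\varepsilon$-regularization described above.
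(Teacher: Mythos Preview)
The paper does not give its own proof of this theorem; it is quoted from \cite{[L1]} as a preliminary result. Your overall strategy --- compare $\log|f|$ with $a\log r + b$ on an annulus via a maximum principle driven by a complex Hessian estimate on the distance function --- is indeed the approach of \cite{[L1]}.

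The gap is in your key geometric input. The full $(1,1)$-form inequality $\sqrt{-1}\,\partial\bar\partial\, d^2 \leq \omega$ does \emph{not} follow from nonnegative holomorphic sectional curvature. For a $(1,0)$ vector $X = \tfrac{1}{2}(U - iJU)$, the quantity $\partial\bar\partial\, d^2(X,\bar X)$ is a constant multiple of $\mathrm{Hess}\,d^2(U,U) + \mathrm{Hess}\,d^2(JU,JU)$, and the second-variation curvature contributions are $R(\dot\gamma, U, \dot\gamma, U)$ and $R(\dot\gamma, JU, \dot\gamma, JU)$ --- sectional curvatures of planes that are not $J$-invariant in general. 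Only for $U = \nabla r$ (so that the variation is in the $J\dot\gamma$ direction, as you yourself write) do these collapse to the holomorphic sectional curvature $R(\dot\gamma, J\dot\gamma, \dot\gamma, J\dot\gamma)$. The hypothesis therefore yields only the \emph{directional} bound $\partial\bar\partial\log r\,(\xi,\bar\xi) \leq 0$ for the complex radial vector $\xi = \nabla r - iJ\nabla r$, not the full form inequality you assert. Correspondingly, your claim that ``$u$ is a subsolution of an elliptic operator'' does not hold as stated: already in $\mathbb{C}^n$ with $n\geq 2$ the function $\log r$ is subharmonic rather than superharmonic, so a scalar (Laplacian) comparison fails outright. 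The argument in \cite{[L1]} instead tests the nonpositivity of $\partial\bar\partial u$ at an interior maximum against the single direction $\xi\wedge\bar\xi$, combining plurisubharmonicity of $\log|f|$ with the directional bound above; you would need to supply this step (and address its equality case) rather than appeal to a global subsolution property.
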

This theorem has the following consequences:
\begin{cor}\label{cor1}
Given the same condition as in theorem \ref{thm8}. If $f\in\mathcal{O}_d(M)$, then $\frac{M(r)}{r^d}$ is non-increasing.
\end{cor}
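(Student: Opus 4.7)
The plan is to read the corollary off directly from Theorem~\ref{thm8}. Set $t = \log r$ and $\phi(t) = \log M(e^t)$, so that Theorem~\ref{thm8} is exactly the assertion that $\phi$ is convex on $\mathbb{R}$. The hypothesis $f \in \mathcal{O}_d(M)$ gives $|f(x)| \leq C(1+d(x,x_0))^d$ and hence $M(r) \leq C(1+r)^d$, which in the new variables reads $\phi(t) \leq dt + O(1)$ as $t \to \infty$.

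The conclusion that $M(r)/r^d$ is non-increasing is equivalent to $\phi(t) - dt$ being non-increasing, i.e., to the bound $\frac{\phi(t_2) - \phi(t_1)}{t_2 - t_1} \leq d$ for every pair $t_1 < t_2$. To establish this, I would use the standard fact that, for a convex function $\phi$, the secant slope $s \mapsto \frac{\phi(s) - \phi(t_1)}{s - t_1}$ is non-decreasing for $s > t_1$. Letting $s \to \infty$ and invoking the asymptotic bound $\phi(s) \leq ds + O(1)$ yields $\limsup_{s \to \infty} \frac{\phi(s) - \phi(t_1)}{s - t_1} \leq d$; by the monotonicity of the secant slope, the same inequality then holds for every finite $t_2 > t_1$, proving the claim.

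Since the corollary is essentially an algebraic consequence of the convexity already furnished by Theorem~\ref{thm8}, there is no substantial obstacle. The only minor point is to exclude the degenerate case $M(r) = 0$: if $f \not\equiv 0$ then by the identity theorem $f$ cannot vanish on an open set, so $M(r) > 0$ for every $r > 0$ and $\log M(r)$ is finite; if $f \equiv 0$ the statement is trivial.
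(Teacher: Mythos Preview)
Your argument is correct and is precisely the intended derivation: the paper states the corollary without proof as an immediate consequence of Theorem~\ref{thm8}, and the convexity-of-secant-slopes argument you give is the standard way to read it off. There is nothing to add.
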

\begin{cor}\label{cor2}
Given the same condition as in theorem \ref{thm8}. If $f(p) = 0$, then $\frac{M(r)}{r}$ is non-decreasing.
\end{cor}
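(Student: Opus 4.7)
The plan is to combine the ratio-monotonicity form of the three circles theorem stated in Theorem \ref{thm8}, namely that $\frac{M(kr)}{M(r)}$ is non-decreasing in $r$ for every fixed $k>1$, with the leading-order behavior of $|f|$ in a neighborhood of the zero $p$. I may assume $f\not\equiv 0$, since otherwise $M(r)/r\equiv 0$ is trivially non-decreasing.

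The first step is to compute $M(r)$ to leading order as $r\to 0^+$. Take holomorphic coordinates $(z_1,\ldots,z_n)$ centered at $p$, and write the Taylor expansion $f(z) = P_m(z) + R(z)$, where $P_m\not\equiv 0$ is the homogeneous polynomial of lowest degree $m$ and $R(z)=O(|z|^{m+1})$; the vanishing $f(p)=0$ forces $m\geq 1$. Set $A := \max_{|z|=1}|P_m(z)|>0$. Because the K\"ahler metric reduces to the Euclidean metric at $p$ in these coordinates, the geodesic ball $B(p,r)$ sandwiches the Euclidean coordinate ball of radius $r$ with distortion tending to $1$ as $r\to 0$. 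A short estimate then gives
\[ M(r) = A\, r^m + o(r^m), \qquad r\to 0^+, \]
and hence for every fixed $\lambda>1$,
\[ \lim_{r\to 0^+}\frac{M(\lambda r)}{M(r)} \;=\; \lambda^m \;\geq\; \lambda, \]
the last inequality using $m\geq 1$.

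The second step propagates this infinitesimal inequality to all scales. Fix $r_0>0$ and $\lambda>1$. By Theorem \ref{thm8}, the function $r\mapsto \frac{M(\lambda r)}{M(r)}$ is non-decreasing on $(0,\infty)$, so
\[ \frac{M(\lambda r_0)}{M(r_0)} \;\geq\; \lim_{r\to 0^+}\frac{M(\lambda r)}{M(r)} \;=\; \lambda^m \;\geq\; \lambda. \]
Dividing by $\lambda$ yields $\frac{M(\lambda r_0)}{\lambda r_0}\geq \frac{M(r_0)}{r_0}$, and since $\lambda>1$ and $r_0>0$ are arbitrary, $r\mapsto M(r)/r$ is non-decreasing. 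The only (rather minor) technical point to tidy up is the local comparison $M(r)=Ar^m+o(r^m)$, which reduces to the standard fact that in holomorphic normal coordinates centered at $p$ the Riemannian and Euclidean distances from $p$ differ by a factor $1+O(r)$; no deeper machinery beyond Theorem \ref{thm8} is needed.
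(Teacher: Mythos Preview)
Your proof is correct. The paper does not spell out a proof of this corollary (it is stated immediately after Theorem~\ref{thm8} as a direct consequence, with details deferred to \cite{[L1]}), but your argument is precisely the natural one: use the local expansion $M(r)=Ar^m+o(r^m)$ with vanishing order $m\ge 1$ to identify the limiting ratio $\lim_{r\to 0}\frac{M(\lambda r)}{M(r)}=\lambda^m\ge\lambda$, then propagate via the monotonicity of $r\mapsto\frac{M(\lambda r)}{M(r)}$ from Theorem~\ref{thm8}. Equivalently, in the convexity language of Theorem~\ref{thm8}, the slope of $\log M(e^t)$ at $t=-\infty$ equals $m\ge 1$, hence is $\ge 1$ everywhere, which is the same computation. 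Nothing further needs tidying.
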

\begin{remark}
The three circle theorem is still true for holomorphic sections on nonpositive bundles. See page $17$ of \cite{[L1]} for a proof.
\end{remark}

\medskip
A multiplicity estimate by Ni \cite{[N1]}(see also \cite{[CFLZ]}):
\begin{theorem}\label{thm9}
Let $M^n$ be a complete noncompact K\"ahler manifold with nonnegative bisectional curvature. Then $dim(\mathcal{O}_d(M))\leq dim(\mathcal{O}_d(\mathbb{C}^n))$.\end{theorem}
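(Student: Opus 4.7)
The plan is to bound $\dim\mathcal{O}_d(M)$ by $\dim\mathcal{O}_d(\mathbb{C}^n)=\binom{n+\lfloor d\rfloor}{n}$ via a jet-injectivity argument based on the three circle theorem. Fix a reference point $p\in M$ and a holomorphic chart centered at $p$. The key local claim is that every nonzero $f\in\mathcal{O}_d(M)$ vanishes at $p$ to order at most $\lfloor d\rfloor$; granted this, the Taylor map
\[
\Phi:\mathcal{O}_d(M)\longrightarrow\bigl\{\text{holomorphic Taylor polynomials at }p\text{ of degree}\leq\lfloor d\rfloor\bigr\}
\]
is injective, and the theorem follows because the target has dimension $\binom{n+\lfloor d\rfloor}{n}=\dim\mathcal{O}_d(\mathbb{C}^n)$.

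To prove the vanishing-order bound, suppose $f\in\mathcal{O}_d(M)$ vanishes at $p$ to exact order $k$ and $f\not\equiv 0$. Using a holomorphic chart at $p$, together with the fact that in the K\"ahler setting one may choose holomorphic normal coordinates in which Riemannian and coordinate distances agree up to a factor of $1+o(1)$ near $p$, the standard Taylor estimate yields $M(r):=\max_{B(p,r)}|f|\leq Cr^{k}$ for all sufficiently small $r>0$. Since $f\not\equiv 0$, pick $r_0>0$ with $M(r_0)>0$, so that $M(r_0)/r_0^d>0$. Corollary \ref{cor1} asserts that $r\mapsto M(r)/r^d$ is non-increasing, so for every $0<r\leq r_0$ we have $M(r)/r^d\geq M(r_0)/r_0^d>0$. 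Combining the two inequalities gives $Cr^{k-d}\geq M(r_0)/r_0^d$ for all sufficiently small $r$, which forces $k\leq d$ (otherwise the left-hand side tends to $0$), and hence $k\leq\lfloor d\rfloor$ since $k$ is an integer.

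The only mildly delicate point is the local comparison between the coordinate ball and the Riemannian geodesic ball needed for the Taylor estimate $M(r)\leq Cr^k$, which is handled routinely by working in holomorphic normal coordinates at $p$. Beyond this elementary input, the argument draws only on the three circle theorem via Corollary \ref{cor1}, so the K\"ahler and nonnegative bisectional curvature hypotheses enter the proof exclusively through that monotonicity. I do not anticipate any serious obstacle; the structure is the standard value-distribution style argument, and the same reasoning would work verbatim for holomorphic sections of any line bundle on which a three circle theorem is available (as noted in the remark following Theorem \ref{thm8}).
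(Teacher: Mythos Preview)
Your proposal is correct and is precisely the route the paper indicates: the paper does not give a proof but remarks that the result ``also follows from corollary \ref{cor1},'' and your jet-injectivity argument via the vanishing-order bound is exactly the standard way to extract the dimension estimate from that monotonicity. The original proof cited (Ni \cite{[N1]}, see also \cite{[CFLZ]}) goes through a heat-flow monotonicity formula rather than the three circle theorem, so your argument corresponds to the alternative the author mentions rather than to Ni's original method.
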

Note this result also follows from corollary \ref{cor1}.

\medskip

In this paper, we will denote by $\Phi(u_1,..., u_k|....)$ any nonnegative functions depending on $u_1,..., u_k$ and some additional parameters such that when these parameters are fixed, $$\lim\limits_{u_k\to 0}\cdot\cdot\cdot\lim\limits_{u_1\to 0}\Phi(u_1,..., u_k|...) = 0.$$  Let $C(n), C(n, v)$ be large positive constants depending only on $n$ or $n, v$; $c(n), c(n, v)$ be small positive constants depending only on $n$ or $n, v$. The values might change from line to line.

\section{\bf{Construct holomorphic charts with uniform size}}
In this section, we introduce the following proposition which is crucial for the construction of holomorphic functions.
\begin{prop}\label{prop1}
Let $M^n$ be a complete K\"ahler manifold with nonnegative bisectional curvature, $x\in M$. There exist $\epsilon(n)>0, \delta = \delta(n)>0$ so that the following holds: For $\epsilon<\epsilon(n)$, if $d_{GH}(B(x, \frac{1}{\epsilon}r), B_{\mathbb{C}^n}(0, \frac{1}{\epsilon}r))<\epsilon r$, there exists a holomorphic chart $(w_1,...., w_n)$ containing $B(x, \delta r)$ so that 
\begin{itemize}
\item $w_s(x) = 0(1\leq s\leq n)$.
\item $|\sum\limits_{s=1}^n|w_s|^2(y)-r^2(y)|\leq\Phi(\epsilon |n)r^2$ in $B(x, \delta r)$. Here $r(y) = d(x, y)$. \item $|dw_s(y)|\leq C(n)$ in $B(x, \delta r)$.\end{itemize} 
\end{prop}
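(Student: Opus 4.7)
The strategy is to combine Cheeger--Colding's almost splitting theorem, Ni--Tam's heat flow method (Theorem \ref{thm6}), and Hörmander's $L^{2}$ estimate (Theorem \ref{thm7}): start from approximate Euclidean coordinates furnished by the almost splitting, pair them into approximately holomorphic functions, and correct the latter into honest holomorphic ones. By rescaling, we may assume $r=1$.

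\smallskip

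From the hypothesis $d_{GH}(B(x,1/\epsilon),B_{\mathbb{C}^n}(0,1/\epsilon))<\epsilon$ and the almost splitting theorem of Cheeger--Colding, I would produce harmonic functions $b_1,\ldots,b_{2n}:B(x,2)\to\mathbb{R}$ with $b_i(x)=0$, $|\nabla b_i|\leq C(n)$ (Cheng--Yau), and
\[
\dashint_{B(x,1)}\sum_{i,j}\bigl|\langle\nabla b_i,\nabla b_j\rangle-\delta_{ij}\bigr|^{2}+\sum_i|\mathrm{Hess}\,b_i|^{2}\leq \Phi(\epsilon|n),
\qquad\bigl|{\textstyle\sum_i}b_i^{2}-d(x,\cdot)^{2}\bigr|\leq \Phi(\epsilon|n)\text{ on }B(x,1).
\]
Forming $v_s:=b_{2s-1}+\sqrt{-1}\,b_{2s}$ for $s=1,\ldots,n$, the Kähler identity $\Delta=2g^{\alpha\bar\beta}\partial_\alpha\partial_{\bar\beta}$ combined with the integral Hessian bound yields $\int_{B(x,1)}|\bar\partial v_s|^{2}\leq\Phi(\epsilon|n)$, so the $v_s$ are approximately holomorphic, vanish at $x$, and satisfy $\sum_s|v_s|^{2}=d(x,\cdot)^{2}+O_{L^{\infty}}(\Phi(\epsilon|n))$.

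\smallskip

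To perturb the $v_s$ into holomorphic $w_s$ via $\bar\partial(v_s-w_s)=\bar\partial v_s$ I need a Stein neighborhood of $x$ carrying a pointwise strictly plurisubharmonic weight, which I construct by Ni--Tam's heat flow. Choose a smooth compactly supported $u$ on $M$ coinciding with $\sum_i b_i^{2}$ on $B(x,3/2)$ whose minimum complex Hessian eigenvalue $\lambda(\cdot)$ is $\geq \tfrac12$ on a definite fraction of $B(x,1)$ (from the integral estimate) and $\geq -C(n)$ globally; Li--Yau heat-kernel bounds then show that for a suitable time $t_0=t_0(n)>0$ the heat-kernel average $\lambda(\cdot,t_0)\geq c(n)>0$ throughout some $B(x,2\delta)$ with $\delta=\delta(n)$. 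Theorem \ref{thm6} supplies a smooth function $\rho=v(\cdot,t_0)$ satisfying $\sqrt{-1}\partial\bar\partial\rho\geq c(n)\omega$ on $B(x,2\delta)$, and a sublevel set $\Omega\subset B(x,2\delta)$ of $\rho$ is a Stein neighborhood of $B(x,\delta)$. On $\Omega$ I apply Theorem \ref{thm7} with weight $\varphi:=N\rho+2(n+1)\log(\sum_s|v_s|^{2}+\eta^{2})$: for $N=N(n)$ large $\sqrt{-1}\partial\bar\partial\varphi\geq c(n)\omega$, and the $L^{2}$-almost-holomorphicity of $v_s$ gives solutions $u_s$ of $\bar\partial u_s=\bar\partial v_s$ with $\int_\Omega|u_s|^{2}e^{-\varphi}\omega^{n}\leq\Phi(\epsilon,\eta|n)$. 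Letting $\eta\to 0$ forces $u_s(x)=0$, and interior elliptic estimates together with the mean value inequality on Ricci-nonnegative balls yield $|u_s|+|du_s|\leq\Phi(\epsilon|n)$ on $B(x,\delta)$. The functions $w_s:=v_s-u_s$ are then holomorphic with $w_s(x)=0$, $|dw_s|\leq C(n)$, and $|\sum_s|w_s|^{2}-d(x,\cdot)^{2}|\leq\Phi(\epsilon|n)$ on $B(x,\delta)$; nondegeneracy of their differentials at $x$, hence the chart property, follows from $\sum_s|w_s|^{2}\approx d(x,\cdot)^{2}$ near $x$ by a direct Taylor expansion of the holomorphic $w_s$.

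\smallskip

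I expect the decisive obstacle to be the construction of the pointwise strictly plurisubharmonic weight above: Cheeger--Colding deliver only integral control on the complex Hessian of $\sum_i b_i^{2}$, whereas Theorem \ref{thm7} demands a \emph{pointwise} lower bound. This is precisely where the Kähler hypothesis and nonnegative bisectional curvature enter, through Ni--Tam's monotonicity which propagates complex Hessian lower bounds forward in time under the heat flow.
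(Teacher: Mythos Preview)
Your overall architecture---Cheeger--Colding splitting functions, Ni--Tam heat flow to manufacture a strictly plurisubharmonic weight, then H\"ormander---is exactly the paper's. But three steps do not go through as written.

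First, the bound $\int_{B(x,1)}|\bar\partial v_s|^{2}\leq\Phi(\epsilon|n)$ does \emph{not} follow from the K\"ahler identity together with the Hessian estimate. The quantity $|\bar\partial v_s|$ measures $|J\nabla b_{2s-1}-\nabla b_{2s}|$, i.e.\ whether the pairing of the real splitting functions is compatible with the complex structure; smallness of $|\mathrm{Hess}\,b_i|$ and near-orthonormality of the $\nabla b_i$ say nothing about this. The paper invokes the argument preceding Lemma~9.14 of Cheeger--Colding--Tian (see (\ref{eq3})): only after a suitable orthogonal change of the $b_i$ does one obtain $\dashint|J\nabla b_{2s-1}-\nabla b_{2s}|^{2}\leq\Phi$. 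Without this step your $v_s$ need not be approximately holomorphic at all.

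Second, your weight $\varphi=N\rho+2(n+1)\log(\sum_s|v_s|^2+\eta^2)$ is problematic: the $v_s$ are not holomorphic, so the logarithmic term is not plurisubharmonic, and its complex Hessian blows up like $\eta^{-2}$ near $x$, forcing $N=N(\eta)\to\infty$. The paper avoids this entirely: it uses only $\psi=u_1$ as weight, obtains $f_s$ small in $L^2$, observes that $f_s=w'_s-w_s$ is \emph{harmonic} (both $w'_s$ and $w_s$ are), and then gets $|f_s|+|\nabla f_s|\leq\Phi$ from the mean value inequality and Cheng--Yau; the normalization $w_s(x)=0$ is achieved afterward by subtracting constants.

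Third, your chart argument is insufficient. The estimate $\bigl|\sum_s|w_s|^2-d(x,\cdot)^2\bigr|\leq\Phi(\epsilon|n)$ is an \emph{absolute} bound (the right side does not shrink as $y\to x$), so a Taylor expansion at $x$ extracts nothing; and even nondegeneracy of $dw$ at one point would not yield injectivity on all of $B(x,\delta)$. The paper (Claim~\ref{cl2}) argues globally: the map $w$ is a $\Phi$-Gromov--Hausdorff approximation to $B_{\mathbb{C}^n}(0,1)$, a volume computation using (\ref{eq22}) forces its topological degree to be $1$, it is a finite map since $\Omega$ is Stein, and then Remmert's theorem gives that a degree-one finite holomorphic map is a biholomorphism.
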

\begin{proof} 
By scaling, we may assume $r>>1$ which is to be determined.  Set $R = \frac{r}{100}>>1$.
According to the assumptions and the Cheeger-Colding theory \cite{[CC1]}(also equation ($1.23$) in \cite{[CC3]}), 
there exist real harmonic functions $b_1,..., b_{2n}$ in $B(x,4r)$ so that 
\begin{equation}\label{eq1}\dashint_{B(x, 2r)}  \sum\limits_{j}|\nabla(\nabla b_j)|^2 +\sum\limits_{j, l}|\langle\nabla b_j, \nabla b_l\rangle - \delta_{jl}|^2\leq \Phi(\epsilon|n, r)\end{equation} and \begin{equation}\label{eq2}b_j(x) = 0(1\leq j\leq 2n); |\nabla b_j|\leq C(n)\end{equation} in $B(x, 2r)$. Moreover, the map $F(y) = (b_1(y),..., b_{2n}(y))$ is a $\Phi(\epsilon|n)r$ Gromov-Hausdorff approximation from $B(x, 2r)$ to $B_{\mathbb{R}^{2n}}(0, 2r)$. 
According to the argument above lemma $9.14$ in \cite{[CCT]}(see also $(20)$ in \cite{[L2]}), after a suitable orthogonal transformation, we may assume \begin{equation}\label{eq3}\dashint_{B(x, r)} |J\nabla b_{2s-1} - \nabla b_{2s}|^2 \leq \Phi(\epsilon|n, r)\end{equation} for $1\leq s\leq n$. Set $w'_s = b_{2s-1}+\sqrt{-1}b_{2s}$. Then \begin{equation}\label{eq4}\dashint_{B(x, r)} |\overline\partial w'_s|^2\leq \Phi(\epsilon|n, r).\end{equation}
The idea is to perturb $w'_s$ so that they become a holomorphic chart. We would like to apply the Hormander $L^2$-estimate. First, we construct the weight function.
Consider the function $$h(y) = \sum\limits_{j=1}^{2n}b_j^2(y).$$ Then in $B(x, r)$, \begin{equation}\label{eq5}|h(y)-r^2(y)|\leq\Phi(\epsilon|n)r^2.\end{equation}
By (\ref{eq2}), \begin{equation}\label{eq6}|\nabla h(y)|\leq C(n)r(y)\end{equation} in $B(x, r)$. The real Hessian of $h$ satisfies \begin{equation}\label{eq7}\int_{B(x, 5R)}\sum\limits_{u, v}|h_{uv}(y)- 2g_{uv}|^2 \leq \Phi(\epsilon|n, R).\end{equation}
Now consider a smooth function $\varphi$: $\mathbb{R}^+\to \mathbb{R}^+$ with $\varphi(t) = t$ for $0\leq t\leq 1$; $\varphi(t) = 0$ for $t\geq 2$; $|\varphi|, |\varphi'|, |\varphi''|\leq C(n)$. Let $H(x, y, t)$ be the heat kernel on $M$ and set \begin{equation}\label{eq8}u(y) = 5R^2\varphi(\frac{h(y)}{5R^2}), u_t(z) = \int_M H(z, y, t)u(y)dy.\end{equation}
\begin{claim}\label{cl1}
$u_1(z)$ satisfies that $(u_1)_{\alpha\overline\beta}(z)\geq c(n)g_{\alpha\overline\beta}>0$ in $B(x, \frac{R}{10})$. 
\end{claim}
\begin{proof}
Let $\lambda(y)$ be the lowest eigenvalue of the complex hessian $u_{\alpha\overline\beta}$. 
By (\ref{eq7}), $$\dashint_{B(x, 5R)}|h_{\alpha\overline\beta} - 2g_{\alpha\overline\beta}|^2\leq \Phi(\epsilon|n, R).$$
Then there exists $E\subset B(x, 5R)$ with \begin{equation}\label{eq9}vol(B(x, 5R)\backslash E)\leq \Phi(\epsilon|n, R); h_{\alpha\overline\beta}\geq \frac{1}{2}g_{\alpha\overline\beta}\end{equation} on $E$. 
By (\ref{eq5}), we may assume $h(y)\leq 5R^2$ in $B(x, 2R)$.  Then $u = h$ in $B(x, 2R)$. We have \begin{equation}\label{eq10}\begin{aligned}(\int_{B(x, 2R)\backslash E}|\lambda^2(y)|dy)^{\frac{1}{2}}&\leq (\int_{B(x, 4R)\backslash E}\sum\limits_{\alpha, \beta}|h_{\alpha\overline\beta}|^2)^{\frac{1}{2}}\\&\leq4(\int_{B(x, 4R)\backslash E}\sum\limits_{\alpha, \beta}|h_{\alpha\overline\beta}-2g_{\alpha\overline\beta}|^2dy)^{\frac{1}{2}}+4(\int_{B(x, 4R)\backslash E}\sum\limits_{\alpha, \beta}|2g_{\alpha\overline\beta}|^2dy)^{\frac{1}{2}}\\&\leq \Phi(\epsilon|n, R).\end{aligned}\end{equation}  \begin{equation}\label{eq11}\begin{aligned}|\lambda|&\leq |u_{\alpha\overline\beta}| \\&= |\varphi' h_{\alpha\overline\beta}+\frac{\varphi''}{5R^2} h_{\alpha}h_{\overline\beta}| \\&\leq |\varphi' (h_{\alpha\overline\beta}-2g_{\alpha\overline\beta})|+|2\varphi' g_{\alpha\overline\beta}+\frac{\varphi''}{5R^2} h_{\alpha}h_{\overline\beta}|\\&\leq C(n)(|h_{\alpha\overline\beta}-2g_{\alpha\overline\beta}|+1).\end{aligned}\end{equation} Therefore, \begin{equation}\label{eq12}\int_{B(x, 5R)} |\lambda(y)|dy\leq C(n)R^{2n}.\end{equation}
Let $\lambda(z, 1) = \int H(z, y, 1)\lambda(y)dy$. Note by definition (\ref{eq8}), $u$ is supported in $B(x, 4R)$.
By (\ref{eq9}), $\lambda\geq\frac{1}{2}$ in $E$. For $z\in B(x, \frac{R}{10})$, \begin{equation}\label{eq13}\begin{aligned}\int H(z, y, 1)\lambda(y)dy &= \int_{B(x, 4R)} H(z, y, 1)\lambda(y)dy \\&\geq \int_{B(x, 2R)\backslash E} H(z, y, 1)\lambda(y)dy +\int_{B(x, 4R)\backslash B(x, 2R)} H(z, y, 1)\lambda(y)dy\\&+ \int_{E\cap B(z, 1)}H(z, y, 1)\lambda(y)dy.\end{aligned}\end{equation} By heat kernel estimate of Li-Yau \cite{[LY]}, $H(z, y, 1)\geq c(n)>0$ for $y\in B(z, 1)$. Also, with volume comparison,  we find $H(z, y, 1)\leq C(n)$ for $y, z\in B(x, 4R)$.
\begin{equation}\label{eq14}\begin{aligned}\int_{B(x, 2R)\backslash E} |H(z, y, 1)\lambda(y)|dy&\leq C(n)\int_{B(x, 2R)\backslash E}|\lambda(y)|dy\\& \leq C(n)(\int_{B(x, 2R)\backslash E}|\lambda^2(y)|dy)^{\frac{1}{2}} (vol(B(x, 2R)\backslash E))^{\frac{1}{2}}\\&\leq \Phi(\epsilon|n, R)\end{aligned}\end{equation} \begin{equation}\label{eq15}\int_{E\cap B(z, 1)}H(z, y, 1)\lambda(y)dy\geq\frac{1}{2}\int_{E\cap B(z, 1)}H(z, y, 1)dy \geq c(n)>0.\end{equation}

Note $d(y, z)\geq R$ for $y\in B(x, 4R)\backslash B(x, 2R)$. Heat kernel estimate says $H(y, z, 1)\leq C(n)e^{-\frac{R^2}{5}}$. Therefore, by (\ref{eq12}), \begin{equation}\label{eq16}\int_{B(x, 4R)\backslash B(x, 2R)} |H(z, y, 1)\lambda(y)|dy\leq C(n)e^{-\frac{R^2}{5}}R^{2n}<\Phi(\frac{1}{R}).\end{equation}
Putting (\ref{eq14}), (\ref{eq15}), (\ref{eq16}) in (\ref{eq13}), we find \begin{equation}\label{eq17}\lambda(z, 1) = \int H(z, y, 1)\lambda(y)dy\geq c(n)-\Phi(\frac{1}{R}|n)-\Phi(\epsilon|n, R)\end{equation} for $z\in B(x, \frac{R}{10})$. We first let $R$ be large, then $\epsilon$ be very small. Then $\lambda(z, 1)>c(n)$. We conclude the proof of the claim from theorem \ref{thm6}.
\end{proof}

 Recall $u_t$ is defined in (\ref{eq8}). We claim that there exists $\epsilon_0 = \epsilon_0(n) > 0$ so that for large $R$, \begin{equation}\label{eq18}\min\limits_{y\in\partial B(x, \frac{R}{20})}u_1(y)> 4\max\limits_{y\in B(x, \epsilon_0\frac{R}{20})}u_1(y).\end{equation}
This is a simple exercise by using the heat kernel estimate. One can also apply proposition \ref{prop5} to conclude the proof.
From now on, we freeze the value of $R$. That is to say, $R=R(n)>0$ satisfies claim \ref{cl1} and (\ref{eq18}) and $\frac{R}{20}\epsilon_0>100$.

Let $\Omega$ be the connected component of $\{y\in B(x, \frac{R}{20})|u_1(y)<2\max\limits_{y\in B(x, \epsilon_0\frac{R}{20})}u_1(y)\}$ containing $B(x, \epsilon_0\frac{R}{20})$. Then $\Omega$ is relatively compact in $B(x, \frac{R}{20})$ and $\Omega$ is a Stein manifold by claim \ref{cl1}.

Now we apply theorem \ref{thm7} to $\Omega$, with the K\"ahler metric induced from $M$. Take smooth $(0, 1)$ forms $g_s = \overline\partial w' _s$ defined in (\ref{eq4}); the weight function $\psi = u_1$. We find smooth functions $f_s$ in $\Omega$ with $\overline\partial f_s = g_s$ and \begin{equation}\label{eq19}\int_\Omega |f_s|^2e^{-\psi}\omega^n\leq \int_\Omega\frac{|g_s|^2}{c}e^{-\psi}\omega^n\leq \frac{\int_\Omega|\overline\partial w'_s|^2\omega^n}{c(n)}\leq \Phi(\epsilon|n).\end{equation} Here we used the fact that $r=100R = 100R(n)$. By proposition \ref{prop5}, we find $\psi=u_1\leq C(R, n) = C(n)$ in $B(x, R)$. Therefore \begin{equation}\label{eq20}\int_{B(x, 10)}|f_s|^2\omega^n\leq  \int_{\Omega}|f_s|^2\omega^n\leq \Phi(\epsilon|n).\end{equation}

Note $w_s = w'_s - f_s$ is holomorphic, as $\overline\partial w_s = \overline\partial w'_s - g_s = 0$. Since $w'_s$ is harmonic(complex), $f_s$ is also harmonic.
By the mean value inequality \cite{[LS]} and Cheng-Yau's gradient estimate \cite{[CY]}, we find that in $B(x, 5)$, \begin{equation}\label{eq21}|f_s|\leq \Phi(\epsilon|n); |\nabla f_s|\leq \Phi(\epsilon|n).\end{equation}Therefore, (\ref{eq1}) implies \begin{equation}\label{eq22}\int_{B(x, 4)}|(w_s)_i\overline{(w_t)_j}g^{i\overline{j}}-2\delta_{st}|\leq \Phi(\epsilon|n).\end{equation} 
\begin{claim}\label{cl2}
$w_s (s = 1,...., n)$ is a holomorphic chart in $B(x, 1)$.
\end{claim}
\begin{proof}
Recall that $(b_1, ..., b_{2n})$ is an $\Phi(\epsilon|n)$ Gromov Hausdorff approximation to the image in $\mathbb{R}^{2n}$. According to (\ref{eq21}), on $B(x, 1)$, $w = (w_1, ...., w_n)$ is also a $\Phi(\epsilon|n)$ Gromov Hausdorff approximation to $B_{\mathbb{C}^n}(0, 1)$. Thus $w^{-1}(\overline{B_{\mathbb{C}^n}(0, 1)})$ is compact in $B(x, 1+\Phi(\epsilon|n))$. First we prove the degree $d$ of the map $w$ is $1$. By (\ref{eq22}) and that holomorphic maps preserves the orientation, $d\geq 1$.  
We also have \begin{equation}\label{eq23}\begin{aligned}d\cdot vol(B_{\mathbb{C}^n}(0, 1)) &= \frac{1}{(-2\sqrt{-1})^n}\int_{w^{-1}(B(0, 1))}dw_1\wedge d\overline{w_1}\wedge\cdot\cdot\cdot\wedge dw_n\wedge d\overline{w_n}\\&\leq (1+\Phi(\epsilon|n))Vol(B(x, (1+\Phi(\epsilon|n)))+\Phi(\epsilon|n)\end{aligned}\end{equation} by (\ref{eq21}) and (\ref{eq22}). This means that if $\epsilon$ is sufficiently small, $d = 1$. That is to say $(w_1, ... ,w_n)$ is generically one to one in $B(x, 1)$.
Moreover, $(w_1, ...., w_n)$ must be a finite map: the preimage of a point must be a subvariety  which is compact in the Stein manifold $\Omega$, thus finitely many points. According to Remmert's theorem in complex analytic geometry, this is an isomorphism.
\end{proof}
We can make a small perturbation so that $w_s(x) = 0$ for $1\leq s\leq n$. This completes the proof of proposition \ref{prop1}.
\end{proof}

\section{\bf{A gap theorem for complex structure of $\mathbb{C}^n$}}
As the first application of proposition \ref{prop1}, we prove a gap theorem for the complex structure of $\mathbb{C}^n$. The conditions are rather restrictive. However, we shall expand some of the ideas in later sections.
\begin{theorem}
Let $M^n$ be a complete noncompact K\"ahler manifold with nonnegative bisectional curvature and $p\in M$. There exists $\epsilon(n)>0$ so that if $\epsilon<\epsilon(n)$ and \begin{equation}\label{eq24}\frac{vol(B(p, r))}{r^{2n}}\geq \omega_{2n}-\epsilon\end{equation} for all $r>0$, then $M$ is biholomorphic to $\mathbb{C}^n$. Here $\omega_{2n}$ is the volume of the unit ball in $\mathbb{C}^n$. Furthermore, the ring $\mathcal{O}_P(M)$ is finitely generated. In fact, it is generated by $n$ functions which form a coordinate in $\mathbb{C}^n$.
\end{theorem}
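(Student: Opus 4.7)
The plan is to invoke Proposition~\ref{prop1} at every scale. Bishop-Gromov monotonicity, combined with the hypothesis $\mathrm{vol}(B(p,r))/r^{2n}\geq \omega_{2n}-\epsilon$, pins the volume ratio into $[\omega_{2n}-\epsilon,\omega_{2n}]$ for every $r>0$; the almost volume cone implies almost metric cone theorem of Cheeger-Colding then yields $d_{GH}(B(p,R),B_{\mathbb{C}^n}(0,R))\leq \Phi(\epsilon|n)R$ for every $R>0$. Thus for $\epsilon<\epsilon(n)$ small, Proposition~\ref{prop1} applies at every scale $r>0$.

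Pick scales $r_i\to\infty$ and let $w^{(i)}=(w^{(i)}_1,\dots,w^{(i)}_n)\colon B(p,\delta r_i)\to\mathbb{C}^n$ be the charts produced by Proposition~\ref{prop1}. Composing each $w^{(i)}$ with a $\mathbb{C}$-linear transformation close to a unitary (whose deviation from $U(n)$ is controlled by $\Phi(\epsilon|n)^{1/2}$, coming from the near-Hermitian estimate~(\ref{eq22}) on $dw^{(i)}|_p$), we arrange that all $dw^{(i)}|_p$ equal a fixed isomorphism $\ell\colon T^{1,0}_pM\to\mathbb{C}^n$, preserving the size estimate of Proposition~\ref{prop1} up to a multiplicative $(1+O(\Phi(\epsilon|n)^{1/2}))$ distortion. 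Corollary~\ref{cor2} applied with $w^{(i)}_s(p)=0$ together with $|dw^{(i)}_s|\leq C(n)$ gives the uniform bound $|w^{(i)}_s(y)|\leq C(n)d(p,y)$ on $B(p,\delta r_i)$; Montel's theorem then extracts a subsequence converging locally uniformly on $M$ to a holomorphic $w=(w_1,\dots,w_n)\colon M\to\mathbb{C}^n$ with $w_s\in\mathcal{O}_1(M)$, $w(p)=0$, $dw|_p=\ell$. Each $w^{(i)}$ is a biholomorphism onto its image, so $\det dw^{(i)}$ is nowhere zero on $B(p,\delta r_i)$; Hurwitz's theorem applied to the local uniform limit forces $\det dw$ to be nowhere zero on $M$, so $w$ is a local biholomorphism everywhere.

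The crux is promoting $w$ to a global biholomorphism. Since $\mathbb{C}^n$ is simply connected, a proper local biholomorphism to $\mathbb{C}^n$ is a biholomorphism, so it suffices to prove $w$ is proper, i.e.\ $|w(y)|\to\infty$ as $d(p,y)\to\infty$. Fix $y$ with $\rho:=d(p,y)$ large and apply Proposition~\ref{prop1} at scale $r=K\rho$ for a large $K=K(n)$ to be chosen; after normalizing as above, the resulting chart $\tilde w\colon B(p,\delta r)\to\mathbb{C}^n$ satisfies $d\tilde w|_p=\ell$ and $\bigl|\sum_s|\tilde w_s(y)|^2-\rho^2\bigr|\leq C(n)\Phi(\epsilon|n)r^2=C(n)\Phi(\epsilon|n)K^2\rho^2$, so $|\tilde w(y)|^2\geq (1-C(n)\Phi(\epsilon|n)K^2)\rho^2$. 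The difference $h=\tilde w-w$ is holomorphic on $B(p,\delta r)$ with $h(p)=0$, $dh|_p=0$, and $|h|_\infty\leq C(n)\delta r$; a second-order Cauchy estimate in local holomorphic coordinates yields $|h(y)|\leq C(n)\delta r\cdot (\rho/(\delta r))^2=C(n)\rho/K$. Choosing $K$ large so $C(n)/K$ is small, and then $\epsilon$ small so $C(n)\Phi(\epsilon|n)K^2<1/2$, we conclude $|w(y)|\geq |\tilde w(y)|-|h(y)|\geq c(n)\rho$ for $\rho$ large, establishing properness.

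For the finite generation, since $w\colon M\to\mathbb{C}^n$ is a biholomorphism and $w_s\in\mathcal{O}_1(M)$, the $w_s$ are algebraically independent and the space of polynomials of degree $\leq\lfloor d\rfloor$ in $w_1,\dots,w_n$ is a subspace of $\mathcal{O}_d(M)$ of dimension $\binom{n+\lfloor d\rfloor}{n}$. Ni's multiplicity estimate (Theorem~\ref{thm9}) gives $\dim\mathcal{O}_d(M)\leq\dim\mathcal{O}_d(\mathbb{C}^n)=\binom{n+\lfloor d\rfloor}{n}$, forcing equality; taking the union over $d$ yields $\mathcal{O}_P(M)=\mathbb{C}[w_1,\dots,w_n]$, generated by the $n$ coordinate functions. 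The main obstacle throughout is the properness step: the error $\Phi(\epsilon|n)r^2$ in Proposition~\ref{prop1}'s size estimate blows up with scale, so one must carefully balance the chart scale $r=K\rho$ against the second-order Cauchy comparison between $\tilde w$ and the limit $w$.
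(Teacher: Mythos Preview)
Your approach has a real gap at the normalization step. You assert that each chart $w^{(i)}$ can be composed with a linear map $A_i$ so that $d(A_iw^{(i)})|_p=\ell$ while $A_i$ remains close to unitary, citing~(\ref{eq22}). But (\ref{eq22}) is an $L^1$ estimate over a ball of radius comparable to $r_i$; nothing in Proposition~\ref{prop1} or its proof gives pointwise control on the Hermitian matrix $\langle dw^{(i)}_s,dw^{(i)}_t\rangle(p)$. Two things then break. First, there is no bound on $|A_i|=|\ell\circ(dw^{(i)}|_p)^{-1}|$, so the normalized charts need not satisfy a uniform gradient bound; the linear growth $|A_iw^{(i)}_s|\leq C(n)d(p,\cdot)$ is unjustified, Montel does not apply, and the claim $w_s\in\mathcal{O}_1(M)$ (on which your finite-generation dimension count rests) collapses. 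Second, and fatal for properness: the size estimate $\bigl|\,|\tilde w|^2-d(p,\cdot)^2\,\bigr|\leq C\Phi(\epsilon|n)r^2$ survives composition only with a \emph{near-unitary} map, so after your normalization of $\tilde w$ you cannot conclude $|\tilde w(y)|\geq c\rho$, and the comparison $|w(y)|\geq|\tilde w(y)|-|h(y)|$ yields nothing. A smaller imprecision: the ``second-order Cauchy estimate in local holomorphic coordinates'' is not available as stated, since you have no holomorphic chart on $B(p,\delta r)$ with Euclidean metric control. The bound $|h(y)|\leq C\rho/K$ you want does hold, but its correct justification is the three-circle theorem (Theorem~\ref{thm8}): convexity of $\tau\mapsto\log M_h(e^\tau)$, together with $M_h(s)=O(s^2)$ as $s\to0$ (from $h(p)=0$, $dh|_p=0$) and $M_h(\delta r/2)\leq C\delta r$, gives $M_h(\rho)\leq M_h(\delta r/2)\,(2\rho/(\delta r))^2\leq C\rho/K$, with no dependence on the second-order Taylor constant of $h$.

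The paper sidesteps the pointwise derivative issue entirely. It orthonormalizes the charts in $L^2(B(p,1))$ (an integral normalization), shows directly from the Gromov--Hausdorff approximation that every linear combination satisfies $M(s_i)/M(s_i/2)\leq 2+\Phi(\epsilon|n)$ (Claim~\ref{cl3}), and lets three-circle push this ratio down to all smaller radii, extracting limits in $\mathcal{O}_{1+\Phi(\epsilon|n)}(M)\subset\mathcal{O}_{3/2}(M)$. Nonvanishing of $dv_1\wedge\cdots\wedge dv_n$ is obtained from a degree argument combined with three-circle on sections of $K_M$ (not Hurwitz), and properness from simultaneously diagonalizing the $L^2$ pairings on $B(p,1)$ and $B(p,R)$ together with a compactness lemma (Claim~\ref{cl4}) applied at scale $R$. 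Finite generation is then read off from the properness lower bound $\sum_s|v_s|^2\geq c(n)R^2$, not from a dimension count against Theorem~\ref{thm9}.
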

\begin{proof}
By scaling if necessary, we may assume $B(p, 1)$ is $C^2$ close to the Euclidean ball $B(0, 1)$ in $\mathbb{R}^{2n}$. In particular, $\partial B(p, 1)$ is $C^2$ close to the sphere $\mathbb{S}^{2n-1}$. Consider the blow down sequence $(M_i, p_i, g_i) = (M, p, \frac{1}{s_i^2}g)$  for $s_i\to\infty$. According to proposition \ref{prop1} and the Cheeger-Colding theory \cite{[CC1]}, if $\epsilon$ is sufficiently small, there exists a holomorphic chart $(w^i_1, ...., w^i_n)$ on $B(p_i, 1)$. Moreover, the map $(w^i_1, ...., w^i_n)$ is a $\Phi(\epsilon|n)$ Gromov-Hausdorff approximation to $B_{\mathbb{C}^n}(0, 1)$.
We may assume \begin{equation}\label{eq25}w^i_s(p_i) = 0\end{equation} for $s = 1,..., n$. We can also regard $w^i_s$ as holomorphic functions on $B(p, s_i)\subset M$. For each $i$, we can find a new basis $v^i_s$ for $span\{w^i_s\}$ so that \begin{equation}\label{eq26}\int_{B(p, 1)}v^i_s\overline{v^i_t} = \delta_{st}.\end{equation} Set \begin{equation}\label{eq27}M^i_s(r) = \max\limits_{x\in B(p, r)}|v^i_s(x)|.\end{equation} 
\begin{claim}\label{cl3}
$\frac{M^i_s(s_i)}{M^i_s(\frac{1}{2}s_i)}\leq 2+\Phi(\epsilon|n)$ for $1\leq s\leq n$.
\end{claim}
\begin{proof}
It suffices to prove this for $s=1$. Let $v^i_1 = \sum\limits_{j=1}^{n} c^i_jw^i_j$. Without loss of generality, assume $|c^i_1| = \max\limits_{1\leq j\leq n}|c^i_j|>0$. Then $\frac{v^i_1}{c^i_1} = w^i_1 + \sum\limits_{j=2}^n\alpha_{ij}w^i_j$ and $|\alpha_{ij}|\leq 1$. Since on $M_i$, $(w^i_1, ..., w^i_n)$ is a $\Phi(\epsilon|n)$ Gromov Hausdorff approximation to $B_{\mathbb{C}^n}(0, 1)$, we find \begin{equation}\label{eq28}\begin{aligned}\frac{M^i_s(s_i)}{M^i_s(\frac{1}{2}s_i)}&=\frac{\max\limits_{x\in B(p, s_i)}|w^i_1(x) + \sum\limits_{j=2}^n\alpha_{ij}w^i_j(x)|}{\max\limits_{x\in B(p, \frac{s_i}{2})}|w^i_1(x) + \sum\limits_{j=2}^n\alpha_{ij}w^i_j(x)|}\\&=\frac{\max\limits_{x\in B(p_i, 1)}|w^i_1(x) + \sum\limits_{j=2}^n\alpha_{ij}w^i_j(x)|}{\max\limits_{x\in B(p_i, \frac{1}{2})}|w^i_1(x) + \sum\limits_{j=2}^n\alpha_{ij}w^i_j(x)|}\\&\leq 2+\Phi(\epsilon|n).\end{aligned}\end{equation} This concludes the proof. \end{proof}

According to the three circle theorem \ref{thm8}, $\frac{M^i_s(2r)}{M^i_s(r)}$ is monotonic increasing for $0<r<\frac{1}{2}s_i$. Then claim \ref{cl3} implies 
\begin{equation}\label{eq29}
\frac{M^i_s(2r)}{M^i_s(r)}\leq 2+\Phi(\epsilon|n)\end{equation}
for $0<r<\frac{1}{2}s_i$. From (\ref{eq26}), we find $M^i_s(\frac{1}{2})\leq C(n)$. (\ref{eq29}) implies \begin{equation}\label{eq30}M^i_s(r)\leq C(n)(r^\alpha+1)\end{equation} for $\alpha = 1+\Phi(\epsilon|n)$.
As $s_i\to\infty$, by taking subsequence, we can assume $v^i_s\to v_s$ uniformly on each compact set of $M$. Set \begin{equation}\label{eq31}M_s(r) = \max\limits_{x\in B(p, r)}|v_s(x)|.\end{equation} Then \begin{equation}\label{eq32}M_s(r)\leq C(n)(r^\alpha+1)\end{equation} for $\alpha = 1+\Phi(\epsilon|n)$ and $r\geq 0$. We may assume $v_s\in\mathcal{O}_{\frac{3}{2}}(M)$. Note that $v_s$ also satisfies \begin{equation}\label{eq33}v_s(p) = 0(1\leq s\leq n); \int_{B(p, 1)}v_s\overline{v_t} = \delta_{st}.\end{equation} Our goal is to prove $(v_1, ..., v_n)$ is a biholomorphism from $M$ to $\mathbb{C}^n$. 
\begin{claim}\label{cl4}
Let $\epsilon$ in (\ref{eq24}) be sufficiently small(depending only on $n$). If we rescale each $v_s$ so that $\max\limits_{B(p, 1)}|v_s| = 1$, then in $B(p, 1)$, $(v_1, ...., v_n)$ is a $\frac{1}{100n}$-Gromov-Haudorff approximation to $B_{\mathbb{C}^n}(0, 1)$.
\end{claim}
\begin{proof}
We argue by contradiction. Assume a positive sequence $\epsilon_i\to 0$ and $(M'_i, q_i)$ is a sequence of $n$-dimensional complete noncompact K\"ahler manifolds with nonnegative bisectional curvature and \begin{equation}\label{eq34}\frac{vol(B(q_i, r))}{r^{2n}}\geq\omega_{2n}-\epsilon_i\end{equation} for all $r> 0$. Assume there exist holomorphic functions $u^i_s (s= 1, ..., n)$ on $M_i'$ so that \begin{equation}\label{eq35}u^i_s(q_i) = 0; u^i_s\in\mathcal{O}_{\frac{3}{2}}(M'_i); \int_{B(q_i, 1)}u^i_s\overline{u^i_t} = c^i_{st}\delta_{st}; \max\limits_{B(q_i, 1)}|u^i_s| = 1.\end{equation}  
Here $c^i_{st}$ are constants. Assume in $B(q_i, 1)$, $(u^i_1, ..., u^i_n)$ is \textbf{not} a $\frac{1}{100n}$-Gromov-Haudorff approximation to $B_{\mathbb{C}^n}(0, 1)$. According to Cheeger-Colding theory \cite{[CC1]} and (\ref{eq34}), $(M'_i, q_i)$ converges to $(\mathbb{R}^{2n}, 0)$ in the pointed Gromov-Hausdorff sense.
By the three circle theorem and (\ref{eq35}), we have uniform bound for $u^i_s$ in $B(q_i, r)$ for any $r>0$. Let $i\to\infty$, there is a subsequence so that $u^i_s\to u_s$ uniformly on each compact set.  Moreover, by remark $9.3$ of \cite{[CCT]}(see also ($21$) in \cite{[L2]}),
there is a natural linear complex structure on $\mathbb{R}^{2n}$. Thus we can identify the limit space with $\mathbb{C}^n$. By lemma $4$ in \cite{[L2]}, the limit of holomorphic functions are still holomorphic. Moreover, $\{u_s\}$ satisfy (\ref{eq35}), according to the three circle theorem. 
Thus $u_s$ are all linear functions which form a standard complex coordinate in $\mathbb{C}^n$. Therefore in $B_{\mathbb{C}^n}(0, 1)$, $(u_1, ..., u_n)$ is an isometry to $B_{\mathbb{C}^n}(0, 1)$. This contradicts the assumption that $(u^i_1, ..., u^i_n)$ is not a $\frac{1}{100n}$-Gromov-Haudorff approximation to $B_{\mathbb{C}^n}(0, 1)$.\end{proof}

Recall that $B(p, 1)$ is $C^2$ close to the Euclidean ball $B(0, 1)$ and $\partial B(p, 1)$ is $C^2$ close to the sphere $\mathbb{S}^{2n-1}$. By the degree theory and claim \ref{cl4}, we find that the degree of the map $(v_1, ...., v_n)$ in $B(p, 1)$ is $1$. This means $dv_1\wedge\cdot\cdot\cdot\wedge dv_n$ is not identically zero.  By (\ref{eq32}) and Cheng-Yau's gradient estimate, $|dv_i|\leq C(n)(r^{\Phi(\epsilon|n)}+1)$. Thus $|dv_1\wedge\cdot\cdot\cdot\wedge dv_n|\leq C(n)(r^{\Phi(\epsilon|n)}+1)$. The canonical line bundle $K_M$ has nonpositive curvature. Note by the remark following corollary \ref{cor2}, three circle theorem also holds for holomorphic sections of nonpositive bundles. Therefore, if the holomorphic $n$-form $dv_1\wedge\cdot\cdot\cdot\wedge dv_n$ vanishes at some point in $M$, then $|dv_1\wedge\cdot\cdot\cdot\wedge dv_n|$ must be of at least linear growth, by corollary \ref{cor2}. Therefore, $dv_1\wedge\cdot\cdot\cdot\wedge dv_n$ is vanishing identically on $M$. This is a contradiction.

Next we prove the map $(v_1, ...., v_n)$: $M\to \mathbb{C}^n$ is proper. Given any $R>1$, we can define a norm $|\cdot|_R$ for the span of $v_1, ..., v_n$ induced by $\int_{B(p, R)}v_s\overline{v_t}$.  There exists a basis $v_1^R, ...., v_n^R$ for the span of $v_1, ..., v_n$ so that \begin{equation}\label{eq36}\int_{B(p, 1)}v_s^R\overline{v_t^R} = \delta_{st}; \int_{B(p, R)}v_s^R\overline{v_t^R}= c(R)_{st}\delta_{st}.\end{equation} Here $c(R)_{st}$ are constants. That is, we diagonalize the two norms $|\cdot|_1$ and $|\cdot|_R$ simultaneously. Obviously we have \begin{equation}\label{eq37}\sum\limits_{s=1}^n|v_s(x)|^2 = \sum\limits_{s=1}^n|v_s^R(x)|^2\end{equation} for any $x\in M$. To prove $(v_1, ..., v_n)$ is proper, it suffices to prove $\sum\limits_{s=1}^n|v_s^R(x)|^2$ is large for $x\in\partial B(p, R)$ and large $R$. Define \begin{equation}\label{eq38}w_s^R(x) = \frac{v_s^R(x)}{c_s^R}\end{equation} where $c_s^R$ are positive constants so that \begin{equation}\label{eq39}\max_{x\in B(p, R)}w_s^R(x) = 1\end{equation} for $s= 1,...., n$. Note $\int_{B(p, 1)}|v_s^R|^2 = 1$ and $v_s^R(p) = 0$. According to corollary \ref{cor2} and (\ref{eq36}), \begin{equation}\label{eq40}c_s^R\geq cR,\end{equation} where $c=c(n) >0$, $R>1$.  We can apply claim \ref{cl4} to $v_s^R$ in $B(p, R)$. Here we have to rescale the radius to $1$. Then we obtain that $(Rw_1^R, ...., Rw_n^R)$ is a $\frac{R}{100n}$-Gromov-Hausdorff approximation from $B(p, R)$ to $B_{\mathbb{C}^n}(0, R)$. In particular, for any $x\in \partial B(p, R)$,  there exists some $s_0$ with $|w_{s_0}^R(x)|\geq \frac{1}{2n}$. Then \begin{equation}\label{eq41}|v_{s_0}^R(x)|= c_{s_0}^R|w_{s_0}^R(x)|\geq \frac{1}{2n}cR;
\sum\limits_{s=1}^n|v_s(x)|^2 = \sum\limits_{s=1}^n|v_s^R(x)|^2\geq c(n)R^2.\end{equation} The properness is proved.

As $dv_1\wedge\cdot\cdot\cdot\wedge dv_n$ is not vanishing at any point on $M$ and $(v_1, ...., v_n)$ is a proper map to $\mathbb{C}^n$, we conclude that $(v_1, ...., v_n)$ is a biholomorphism from $M$ to $\mathbb{C}^n$.

Next we prove $\mathcal{O}_P(M)$ is generated by $(v_1, ...., v_n)$.
We can regard $(v_1,..., v_n)$ as a holomorphic coordinate system on $M$.
If $f\in\mathcal{O}_d(M)$, we can think $f = f(v_1, ...., v_n)$.
It suffices to prove the right hand side is a polynomial. Indeed, $|f(x)|\leq C(1+d(x, p)^d)$.
Note by (\ref{eq41}),  $|f(v_1, ..., v_n)|\leq C((\sum\limits_{s=1}^n|v_s|^2)^{\frac{d}{2}}+1)$.
This proves $f$ is a polynomial of $v_1, ...., v_n$.

\end{proof}
\section{\bf{Proof of theorem \ref{thm3}}}
\begin{proof}
We only consider the case for $n\geq 2$. Otherwise, the result is known. Pick $p\in M$.
Let \begin{equation}\label{eq42}\alpha = \lim\limits_{r\to\infty}\frac{vol(B(p, r))}{r^{2n}}>0.\end{equation}
 Consider the blow down sequence $(M_i, p_i, g_i) = (M, p, \frac{1}{s_i^2}g)$ for $s_i\to\infty$. By Cheeger-Colding theory \cite{[CC1]}, a subsequence converges to a metric cone $(X, p_\infty, d_\infty)$. Define \begin{equation}\label{eq43}r(x) = d_\infty(x, p_\infty), x\in X; r_i(x) = d_{g_i}(x, p_i), x\in M_i.\end{equation} Now pick two regular points $y_0, z_0\in X$ with \begin{equation}\label{eq44}r(y_0) = r(z_0) = 1; d_\infty(y_0, z_0)\geq c(n, \alpha)>0.\end{equation} Note the latter inequality is guaranteed by theorem \ref{thm5}. There exists $\delta_0>0$ satisfying \begin{equation}\label{eq45}B(y_0, 2\delta_0)\cap B(z_0, 2\delta_0) = \Phi; \delta_0<\frac{1}{10};\end{equation} \begin{equation}\label{eq46}d_{GH}(B(y_0, \frac{1}{\epsilon}\delta_0), B_{\mathbb{R}^{2n}}(0, \frac{1}{\epsilon}\delta_0))\leq \frac{1}{2}\epsilon\delta_0; d_{GH}(B(z_0, \frac{1}{\epsilon}\delta_0), B_{\mathbb{R}^{2n}}(0, \frac{1}{\epsilon}\delta_0))\leq \frac{1}{2}\epsilon\delta_0.\end{equation} Here $\epsilon = \frac{1}{2}\epsilon(n)$, which is given by proposition \ref{prop1}.
Therefore, if $i$ is sufficiently large, we can find points $y_i, z_i\in M_i$ with $r_i(y_i)=r_i(z_i) = 1$ and 
\begin{equation}\label{eq47}d_{GH}(B(y_i, \frac{1}{\epsilon}\delta_0), B_{\mathbb{R}^{2n}}(0, \frac{1}{\epsilon}\delta_0))\leq \epsilon\delta_0; d_{GH}(B(z_i, \frac{1}{\epsilon}\delta_0), B_{\mathbb{R}^{2n}}(0, \frac{1}{\epsilon}\delta_0))\leq \epsilon\delta_0.\end{equation}
Let $w^i_s$ and $v^i_s$ be the local holomorphic charts around $y_i$ and $z_i$ constructed in proposition \ref{prop1}. Note that they have uniform size (independent of $i$). By changing the value of $\delta_0$, we may assume $w^i_s, v^i_s$ are holomorphic charts in $B(y_i, \delta_0)$ and $B(z_i, \delta_0)$. Moreover, \begin{equation}\label{eq48}|dw^i_s|, |dv^i_s|\leq C(n); w^i_s(y_i)= 0, v^i_s(z_i)=0;\end{equation} \begin{equation}\label{eq49}|\sum\limits_{s= 1}^{n}|w^i_s(y)|^2 - d_{g_i}(y, y_i)^2|\leq \Phi(\epsilon|n)\delta_0^2;\end{equation}   \begin{equation}\label{eq50}|\sum\limits_{s= 1}^{n}|v^i_s(z)|^2 - d_{g_i}(z, z_i)^2|\leq \Phi(\epsilon|n)\delta_0^2\end{equation} for $y\in B(y_i, \delta_0), z\in B(z_i, \delta_0)$. We need to construct a weight function on $B(p_i, R)$ for some large $R$ to be determined later. The construction is similar to proposition \ref{prop1}. Set \begin{equation}\label{eq51}A_i = B(p_i, 5R)\backslash B(p_i, \frac{1}{5R}).\end{equation}   By Cheeger-Colding theory \cite{[CC1]}((4.43) and (4.82)), there exists a smooth function $\rho_i$ on $M_i$ so that 
\begin{equation}\label{eq52}
\int_{A_i}|\nabla\rho_i-\nabla \frac{1}{2}r_i^2|^2 + |\nabla^2\rho_i-g_i|^2<\Phi(\frac{1}{i}|R);
\end{equation}
\begin{equation}\label{eq53}
|\rho_i-\frac{r_i^2}{2}|<\Phi(\frac{1}{i}|R)
 \end{equation} in $A_i$. According to ($4.20$)-($4.23$) in \cite{[CC1]}, \begin{equation}\label{eq54}\rho_i =  \frac{1}{2}(\mathcal{G}_i)^{\frac{2}{2-2n}}; \Delta\mathcal{G}_i(x) = 0, x\in B(p_i, 10R)\backslash B(p_i, \frac{1}{10R});\end{equation}  \begin{equation}\label{eq55}\mathcal{G}_i = r_i^{2-2n}\end{equation} on $\partial (B(p_i, 10R)\backslash B(p_i, \frac{1}{10R})).$ Now \begin{equation}\label{eq56}|\nabla\rho_i(y)| = C(n)|\mathcal{G}_i|^{\frac{n}{1-n}}|\nabla\mathcal{G}_i(y)|.\end{equation} By (\ref{eq53})-(\ref{eq55}) and Cheng-Yau's gradient estimate, \begin{equation}\label{eq57}|\nabla\rho_i(y)|\leq C(n)r_i(y)\end{equation} for $y\in A_i$ and sufficiently large $i$. Now consider a smooth function $\overline\varphi$: $\mathbb{R}^+\to \mathbb{R}^+$ given by $\overline\varphi(t) = t$ for $t\geq 2$; $\overline\varphi(t) = 0$ for $0\leq t\leq 1$; $|\overline\varphi|, |\overline\varphi'|, |\overline\varphi''|\leq C(n)$. Let \begin{equation}\label{eq58}u_i(x) = \frac{1}{R^2}\overline\varphi(R^2\rho_i(x)).\end{equation}  We set $u_i(x) = 0$ for $x\in B(p_i, \frac{1}{5R})$. Then $u_i$ is smooth in $B(p_i, 4R)$.
 \begin{claim}\label{cl5}
 For sufficiently large $i$, $\int_{B(p_i, 4R)}|\nabla u_i-\nabla \frac{1}{2}r_i^2|^2 + |\nabla^2u_i-g_i|^2<\Phi(\frac{1}{R}); |u_i-\frac{r_i^2}{2}|<\Phi(\frac{1}{R})$ and $|\nabla u_i|\leq C(n)r_i$ in $B(p_i, 4R)$.
  \end{claim}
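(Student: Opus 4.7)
The plan is to decompose $B(p_i,4R)$ into three pieces according to the support of the cutoff $\overline\varphi$ and handle each separately. Set $\Omega_{\mathrm{out}}=\{R^{2}\rho_i\ge 2\}$ (where $u_i=\rho_i$), $\Omega_{\mathrm{tr}}=\{1\le R^{2}\rho_i\le 2\}$ (the transition annulus), and $\Omega_{\mathrm{in}}=\{R^{2}\rho_i\le 1\}\cup B(p_i,1/(5R))$ (where $u_i=0$). Combining (\ref{eq53}) with the definitions (\ref{eq54})--(\ref{eq55}) shows that for large $i$ the function $\rho_i$ is quantitatively close to $r_i^{2}/2$ on $A_i$, so $\Omega_{\mathrm{tr}}\subset\{c(n)/R\le r_i\le C(n)/R\}$, and the part of $\Omega_{\mathrm{in}}$ lying in $B(p_i,4R)$ is contained in $\{r_i\le C(n)/R\}$, whose volume is at most $C(n)R^{-2n}$ by Bishop--Gromov.

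On $\Omega_{\mathrm{out}}$ we have $u_i=\rho_i$, so the gradient and Hessian identities import (\ref{eq52}), (\ref{eq53}), and (\ref{eq57}) verbatim; their contribution to the two $L^{2}$-integrals is $\Phi(\tfrac{1}{i}|R)$, and the pointwise bound on $|u_i-r_i^{2}/2|$ and $|\nabla u_i|\le C(n)r_i$ are automatic. On $\Omega_{\mathrm{in}}\cap B(p_i,4R)$ all derivatives of $u_i$ vanish, while $r_i^{2}/2$ and $|\nabla(r_i^{2}/2)|^{2}=r_i^{2}$ are both bounded by $C(n)/R^{2}$; integrating over a set of volume $C(n)R^{-2n}$ and recalling $|g_i|^{2}=2n$ contributes at most $C(n)R^{-2n}=\Phi(1/R)$ to each $L^{2}$-integral, and the pointwise estimate $|u_i-r_i^{2}/2|\le C(n)/R^{2}$ is immediate.

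The transition region is the delicate step. By the chain rule
\begin{equation*}
\nabla u_i=\overline\varphi'(R^{2}\rho_i)\nabla\rho_i,\qquad \nabla^{2}u_i=\overline\varphi'(R^{2}\rho_i)\nabla^{2}\rho_i+R^{2}\overline\varphi''(R^{2}\rho_i)\,d\rho_i\otimes d\rho_i,
\end{equation*}
and differentiating the cutoff twice produces a factor of $R^{2}$ in the Hessian that looks dangerous. The saving point is that inside $\Omega_{\mathrm{tr}}$ one has $r_i\le C(n)/R$, hence by (\ref{eq57}) $|\nabla\rho_i|\le C(n)r_i\le C(n)/R$; this yields $|\nabla u_i|\le C(n)r_i$ pointwise and $R^{2}|\overline\varphi''|\,|d\rho_i|^{2}\le C(n)R^{2}r_i^{2}\le C(n)$, so $|\nabla^{2}u_i-g_i|^{2}\le C(n)(1+|\nabla^{2}\rho_i|^{2})$. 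Combined with (\ref{eq52}) (used in triangle-inequality form to bound $\int_{\Omega_{\mathrm{tr}}}|\nabla^{2}\rho_i|^{2}$) and the volume bound $C(n)R^{-2n}$ for $\Omega_{\mathrm{tr}}$, the transition contribution to the two $L^{2}$-integrals is at most $\Phi(\tfrac{1}{i}|R)+C(n)R^{-2n}$, while the pointwise bound $|u_i-r_i^{2}/2|\le C(n)/R^{2}$ comes from $|u_i|\le C(n)/R^{2}$ together with $r_i^{2}/2\le C(n)/R^{2}$. Summing the three regions and choosing $i=i(R)$ large enough so that every $\Phi(\tfrac{1}{i}|R)$-term is absorbed into a single $\Phi(1/R)$ completes the proof. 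The only real obstacle is the $R^{2}$-factor created by $\overline\varphi''$ in the transition region, and it is controlled precisely because the cutoff fires where $|\nabla\rho_i|^{2}\le C(n)/R^{2}$, so the product stays $O(1)$.
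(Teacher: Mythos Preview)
Your proof is correct and follows exactly the same approach as the paper: compute $\nabla u_i$ and $\nabla^2 u_i$ by the chain rule and then use the estimates (\ref{eq52}), (\ref{eq53}), (\ref{eq57}) together with the fact that the cutoff $\overline\varphi$ is active only where $r_i\le C(n)/R$. The paper's proof consists precisely of writing down the two chain-rule formulas (\ref{eq59})--(\ref{eq60}) and declaring the rest a ``routine calculation''; your three-region decomposition is simply that routine calculation spelled out in full, including the key observation that the dangerous $R^{2}\overline\varphi''$ term is harmless because $|\nabla\rho_i|^{2}\le C(n)/R^{2}$ on the transition annulus.
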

  \begin{proof}
  We have \begin{equation}\label{eq59}\nabla u_i(x) = \overline\varphi'(R^2\rho_i(x))\nabla\rho_i(x);\end{equation}\begin{equation}\label{eq60}\nabla^2 u_i(x) = R^2\overline\varphi''(R^2\rho_i(x))\nabla\rho_i\otimes\nabla\rho_i + \overline\varphi'(R^2\rho_i(x))\nabla^2\rho_i.\end{equation}
The proof follows from a routine calculation, by (\ref{eq53}), (\ref{eq54}), (\ref{eq57}).
  \end{proof}
  
Similar as in proposition \ref{prop1}, consider a smooth function $\varphi$: $\mathbb{R}^+\to \mathbb{R}^+$ with $\varphi(t) = t$ for $0\leq t\leq 1$; $\varphi(t) = 0$ for $t\geq 2$; $|\varphi|, |\varphi'|, |\varphi''|\leq C(n)$. Set \begin{equation}\label{eq61} v_i(z)=3R^2\varphi(\frac{u_i(z)}{3R^2}), v_{i, t}(z) = \int_M H_i(z, y, t)v_i(y)dy.\end{equation} Here $H_i(x, y, t)$ is the heat kernel on $M_i$. Then $v_i$ is supported in $B(p_i, 4R)$.
By similar arguments as in claim \ref{cl1}, we arrive at the following:
 \begin{prop}\label{prop2}
 $v_{i, 1}(z)$ satisfies that $(v_1)_{\alpha\overline\beta}(z)\geq c(n, \alpha)g_{\alpha\overline\beta}>0$ for $z\in B(p_i, \frac{R}{10})$. Here $\alpha>0$ is given by (\ref{eq42}).\end{prop}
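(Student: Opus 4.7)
The argument will follow the template of Claim \ref{cl1} almost verbatim, with Claim \ref{cl5} replacing the local Hessian bound (\ref{eq7}) and with the non-collapsing hypothesis $\alpha>0$ from (\ref{eq42}) replacing the Euclidean closeness used there. Let $\lambda(y)$ denote the smallest eigenvalue of the complex Hessian $(v_i)_{\alpha\overline\beta}(y)$, and set $\lambda(z,1)=\int_{M_i}H_i(z,y,1)\lambda(y)\,dy$. Since $v_i$ is smooth and compactly supported in $B(p_i,4R)$, Theorem \ref{thm6} applied with initial datum $v_i$ yields $(v_{i,1})_{\alpha\overline\beta}(z)\ge \lambda(z,1)\,g_{\alpha\overline\beta}$, so it is enough to prove $\lambda(z,1)\ge c(n,\alpha)$ on $B(p_i,R/10)$.

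The first step is to identify a large good set on which $(v_i)_{\alpha\overline\beta}$ is close to $g_{\alpha\overline\beta}$. Claim \ref{cl5} gives $\int_{B(p_i,4R)}|\nabla^2 u_i-g_i|^2<\Phi(1/R)$, so Chebyshev produces $E\subset B(p_i,4R)$ with $vol(B(p_i,4R)\setminus E)<\Phi(1/R|n,\alpha)$ and $(u_i)_{\alpha\overline\beta}\ge \tfrac{1}{2}g_{\alpha\overline\beta}$ on $E$. Since $|u_i-r_i^2/2|<\Phi(1/R)$ forces $u_i\le 3R^2$ throughout $B(p_i,2R)$, the choice of $\varphi$ gives $v_i=u_i$ there, hence $\lambda\ge 1/2$ on $E\cap B(p_i,2R)$. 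Elsewhere the chain rule combined with the gradient bound $|\nabla u_i|\le C(n)r_i$ of Claim \ref{cl5} yields $|\lambda|\le C(n)(|\nabla^2 u_i-g_i|+1)$; coupled with Bishop-Gromov this furnishes the $L^1$ estimate $\int_{B(p_i,4R)}|\lambda|\le C(n)R^{2n}$ together with $(\int_{B(p_i,2R)\setminus E}|\lambda|^2)^{1/2}\le \Phi(1/R|n)$, exactly as in (\ref{eq10})--(\ref{eq12}).

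The second step splits $\lambda(z,1)$ for $z\in B(p_i,R/10)$ precisely as in (\ref{eq13})--(\ref{eq17}). The volume growth $\alpha>0$ gives $vol_{g_i}(B(z,1))\ge c(n,\alpha)$, whence the Li-Yau estimate yields $H_i(z,y,1)\ge c(n,\alpha)$ for $y\in B(z,1)$ and a uniform upper bound $H_i(z,y,1)\le C(n,\alpha)$ on $B(p_i,4R)$. The good-set contribution $\int_{E\cap B(z,1)}H_i\lambda\ge c(n,\alpha)$; the bad-set contribution from $B(p_i,2R)\setminus E$ is at least $-\Phi(1/R|n,\alpha)$ by Cauchy-Schwarz; and the far contribution from $B(p_i,4R)\setminus B(p_i,2R)$ is at least $-C(n,\alpha)e^{-R^2/50}R^{2n}$ from Gaussian decay of the heat kernel together with the $L^1$ bound. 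Taking $R=R(n,\alpha)$ sufficiently large gives $\lambda(z,1)\ge c(n,\alpha)$, which finishes the proof. The main obstacle is the careful bookkeeping of the dependence of the constants on $\alpha$: this is precisely where maximal volume growth enters essentially, both in the Li-Yau lower bound on $H_i$ and in volume comparison, whereas in Claim \ref{cl1} the pointwise Euclidean closeness supplied the analogous non-collapsing for free.
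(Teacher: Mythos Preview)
Your proposal is correct and follows essentially the same approach as the paper, which simply states ``By similar arguments as in claim \ref{cl1}'' and leaves the details implicit. Your explicit tracking of the $\alpha$-dependence through the Li--Yau heat kernel bounds and volume comparison is exactly the adaptation the paper has in mind.
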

 
 Now define \begin{equation}\label{eq62}q_i(x) = 4n(\log(\sum\limits_{s = 1}^{n}|w^i_s|^2)\lambda(4\frac{\sum\limits_{s = 1}^{n}|w^i_s|^2}{\delta_0^2}) + \log(\sum\limits_{s = 1}^{n}|v^i_s|^2)\lambda(4\frac{\sum\limits_{s = 1}^{n}|v^i_s|^2}{\delta_0^2})).\end{equation} Here $\lambda$ is a standard cut-off function $\mathbb{R}^+\to \mathbb{R}^+$ with $\lambda(t) = 1$ for $0\leq t\leq 1$; $\lambda(t) = 0$ for $t\geq 2$.  Note by (\ref{eq49}) and (\ref{eq50}), $q_i(x)$ has compact support in $B(y_i, \delta_0)\cup B(z_i, \delta_0)\subset B(p_i, 2)$.
 \begin{lemma}\label{lm1}
 $\sqrt{-1}\partial\overline\partial q_i \geq -C(n, \delta_0)\omega_i$. Moreover, $e^{-q_i(x)}$ is not locally integrable at $y_i$ and $z_i$.
 \end{lemma}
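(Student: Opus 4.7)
The plan is to decompose $q_i = 4n(A_w + A_v)$, where $A_w(x) = \log h(x)\cdot\lambda\bigl(4h(x)/\delta_0^2\bigr)$ with $h(x)=\sum_{s=1}^n|w_s^i(x)|^2$ and $A_v$ is the analogous function in the $v_s^i$. By (\ref{eq49})--(\ref{eq50}) together with (\ref{eq45}), the supports of $A_w$ and $A_v$ lie in the disjoint balls $B(y_i,\delta_0)$ and $B(z_i,\delta_0)$, so it suffices to verify both assertions for $A_w$ alone; the argument for $A_v$ is identical by symmetry.

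For the Hessian bound I would stratify $B(y_i,\delta_0)$ by the value of the cutoff. Where $h\leq\delta_0^2/4$ we have $\lambda\equiv 1$, hence $A_w=\log\sum_s|w_s^i|^2$, which is plurisubharmonic because the $w_s^i$ are holomorphic, and $\sqrt{-1}\partial\bar\partial A_w\geq 0$. Where $h\geq\delta_0^2/2$, $A_w$ vanishes identically. Only the annular transition region $\delta_0^2/4\leq h\leq\delta_0^2/2$ requires any real work: there $h$ is bounded away from $0$ and $\log h$ is bounded, and the product-rule expansion
\[
\sqrt{-1}\partial\bar\partial A_w = \lambda\,\sqrt{-1}\partial\bar\partial\log h + \sqrt{-1}\bigl(\partial\log h \wedge \bar\partial\lambda + \partial\lambda \wedge \bar\partial\log h\bigr) + \log h\cdot \sqrt{-1}\partial\bar\partial\lambda
\]
reduces the estimate to bounding each term by $C(n,\delta_0)\omega_i$. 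This is straightforward using $|dw_s^i|\leq C(n)$ from proposition \ref{prop1} and the identity $\partial\bar\partial(w_s^i\overline{w_s^i}) = dw_s^i\wedge\overline{dw_s^i}$, with the $\delta_0$-dependence coming solely from the $4/\delta_0^2$ factors in the derivatives of $\lambda$. Combining the three strata yields the desired bound; I expect this step to be essentially bookkeeping, with no genuine obstacle.

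The content-rich assertion is the non-integrability of $e^{-q_i}$, which is the reason for the specific coefficient $4n$. On a sufficiently small neighborhood $U$ of $y_i$ the $w$-side cutoff is identically $1$ and the $v$-side cutoff vanishes (since $z_i$ is far from $y_i$), so $q_i=4n\log h$ on $U$ and hence $e^{-q_i} = h^{-4n}$. By proposition \ref{prop1}, $(w_1^i,\dots,w_n^i)$ maps $U$ biholomorphically onto an open set $V\subset\mathbb{C}^n$ containing $0 = w(y_i)$, and $h = |w|^2$ literally by definition; the pushed-forward volume form is comparable to the Euclidean one on $V$ through the nonvanishing Jacobian of the biholomorphism on compact subsets. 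Polar coordinates on $\mathbb{R}^{2n}$ then give
\[
\int_U e^{-q_i}\,\omega_i^n \;\geq\; c\int_{|w|<r_0}|w|^{-8n}\,dV_{\mathrm{Eucl}} \;=\; c'\int_0^{r_0} r^{-6n-1}\,dr \;=\; \infty
\]
for every $n\geq 1$, which furnishes the required non-integrability at $y_i$; the identical argument at $z_i$ closes the lemma. The only mild delicacy, readily handled, is confirming a positive lower bound on the Jacobian of the chart on a compact subneighborhood, which is immediate once one invokes the biholomorphism property established in the proof of proposition \ref{prop1}.
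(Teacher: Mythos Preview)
Your proposal is correct and follows essentially the same approach as the paper: both proofs stratify by the value of the cutoff, invoke the plurisubharmonicity of $\log\sum_s|w_s^i|^2$ where $\lambda\equiv 1$, bound the cross terms in the transition annulus using $|dw_s^i|\leq C(n)$, and verify non-integrability from $e^{-q_i}=(\sum_s|w_s^i|^2)^{-4n}$ near $y_i$. Your non-integrability argument via the biholomorphic change of coordinates and polar computation is a bit more elaborate than necessary (it suffices to note $\sum_s|w_s^i|^2\leq C\,d(\cdot,y_i)^2$ from smoothness and the vanishing at $y_i$), but it is perfectly valid.
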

 \begin{proof}
\begin{equation}\label{eq63}|\sqrt{-1}\partial\overline\partial |w^i_s|^2| = |\partial w^i_s\wedge\overline{\partial w^i_s}|\leq |dw^i_s|^2\leq C(n)\end{equation} in $B(y_i, \delta_0)$. When $\lambda'(4\frac{\sum\limits_{s = 1}^{n}|w^i_s|^2}{\delta_0^2})\neq 0$, \begin{equation}\label{eq64}\delta_0^2\geq\sum\limits_{s = 1}^{n}|w^i_s|^2\geq \frac{1}{4}\delta_0^2.\end{equation}
Also note \begin{equation}\label{eq65}\sqrt{-1}\partial\overline\partial\log(\sum\limits_{s = 1}^{n}|w^i_s|^2)\geq 0\end{equation} in the current sense. Then the proof of the first part follows from routine calculation.

 For the second part, when $x\in B(y_i, \frac{\delta_0}{10})$, $e^{-q_i(x)} = \frac{1}{(\sum\limits_{s = 1}^{n}|w^i_s|^2)^{4n}}$. As $w^i_s(y_i) = 0$ for all $s$, a simple calculation shows $e^{-q_i(x)}$ is not locally integrable at $y_i$. The same argument works for $z_i$.
  \end{proof}
  Putting proposition \ref{prop2} and lemma \ref{lm1} together, we find $C(n, \alpha, \delta_0)>0$ so that \begin{equation}\label{eq66}\sqrt{-1}\partial\overline\partial(q_i(x) + C(n, \alpha, \delta_0)v_{i, 1}(x))\geq \omega_i\end{equation} in $B(p_i, \frac{R}{15})$.  Set \begin{equation}\label{eq67}\psi_i(x) = q_i(x) + C(n, \alpha, \delta_0)v_{i, 1}(x).\end{equation}
    
  By the same argument as in proposition \ref{prop1}, we find $\epsilon_0 = \epsilon_0(\alpha, n) > 0$ so that for sufficiently large $R$, \begin{equation}\label{eq68}\min\limits_{y\in\partial B(p_i, \frac{R}{20})}v_{i, 1}(y)> 4\max\limits_{y\in B(p_i, \epsilon_0\frac{R}{20})}v_{i, 1}(y).\end{equation} Of course, we can assume \begin{equation}\label{eq69}\frac{\epsilon_0R}{20}>4.\end{equation}
  From now on, we freeze the value of $R$. That is, \begin{equation}\label{sq70}R=R(n, \alpha)>0\end{equation} satisfies the all the conditions above. 
Let $\Omega_i$ be the connected component of $\{y\in B(p_i, \frac{R}{20})|v_{i, 1}(y)<2\max\limits_{y\in B(p_i, \epsilon_0\frac{R}{20})}v_{i, 1}(y)\}$ containing $B(p_i, \epsilon_0\frac{R}{20})$. Then $\Omega_i$ is relatively compact in $B(p_i, \frac{R}{20})$ and $\Omega_i$ is a Stein manifold, by proposition \ref{prop2}. Also $B(p_i, 3)\subset\Omega_i$.

 Now consider a function $f_i(x) = 1$ for $x\in B(y_i, \frac{\delta_0}{4})$; $f_i$ has compact support in $B(y_i, \delta_0)\subset B(p_i, 2)$; $|\nabla f_i|\leq C(n, \alpha, \delta_0)$. We solve the equation $\overline\partial h_i = \overline\partial f_i$ in $\Omega_i$ with \begin{equation}\label{eq71}\int_{\Omega_i}|h_i|^2e^{-\psi_i}\leq \int_{\Omega_i}|\overline\partial f_i|^2e^{-\psi_i}\leq C(n, \alpha, \delta_0).\end{equation} By lemma \ref{lm1}, $h_i(y_i) = h_i(z_i) = 0$. Therefore, the holomorphic function $\mu_i = f_i-h_i$ is not constant in $\Omega_i$. It is easy to see that $\psi_i(x)\leq C(n,\alpha, \delta_0)$ in $B(p_i, 3)$. Then \begin{equation}\label{eq72}\frac{1}{C(n, \alpha, \delta_0)}\int_{B(p_i, 3)}|h_i|^2 \leq \int_{\Omega_i}|h_i|^2e^{-\psi_i}\leq C(n, \alpha, \delta_0).\end{equation} Thus \begin{equation}\label{eq73}\int_{B(p_i, 3)}|\mu_i|^2 \leq 2\int_{B(p_i, 3)}(|h_i|^2 + |f_i|^2)\leq C(n, \alpha, \delta_0).\end{equation}
 Mean value inequality implies that \begin{equation}\label{eq74}|\mu_i(x)|\leq C(n, \alpha, \delta_0)\end{equation} for $x\in B(p_i, 2)$. Therefore, the holomorphic function \begin{equation}\label{eq75}\nu^*_i(x) = \mu_i(x)-\mu_i(p_i)\end{equation} is uniformly bounded in $B(p_i, 2)$. Set \begin{equation}\label{eq76}M'_i(r) = \max\limits_{x\in B(p_i, r)}|\nu_i(x)|.\end{equation} Then \begin{equation}\label{eq77}M'_i(2)\leq C(n, \alpha, \delta_0).\end{equation} On the other hand, as $\mu_i(y_i) = f_i(y_i) - h_i(y_i) = 1$ and $\mu_i(z_i)= f_i(z_i)-h_i(z_i) = 0$, we find \begin{equation}\label{eq78}M'_i(1)\geq \frac{1}{2}.\end{equation} Therefore \begin{equation}\label{eq79}\frac{M'_i(2)}{M'_i(1)}\leq C(n, \alpha, \delta_0).\end{equation} Now we are ready to apply the three circle theorem. More precisely,
 we consider the rescale functions $\overline\nu^*_i = \beta_i\nu^*_i$ in $B(p, 2s_i)\subset M$. Here $\beta_i$ are constants so that \begin{equation}\label{eq80}\int_{B(p, 2)}|\overline\nu^*_i|^2 = 1.\end{equation} This implies \begin{equation}\label{eq81}|\overline\nu^*_i|\leq C(n, \alpha)\end{equation} in $B(p, 1)$. Set $M_i(r) = \max\limits_{x\in B(p, r)}|\overline\nu^*_i|$. The three circle theorem says $\frac{M_i(2r)}{M_i(r)}$ is monotonic increasing for $0< r\leq s_i$. By (\ref{eq79}) and similar arguments as in (\ref{eq32}),  we obtain that \begin{equation}\label{eq82}M_i(r)\leq C(n, \alpha, \delta_0)(r^{C(n, \alpha, \delta_0)}+1)\end{equation} for all $i$ and $s_i\geq r$. Let $i\to\infty$, a subsequence of $\overline\nu^*_i$ converges uniformly on each compact set to a holomorphic function $v$ of polynomial growth. $v$ cannot be constant, as $v$ satisfies $v(p) = 0$ and $\int_{B(p, 2)}|v|^2 = 1$. Moreover, the degree at infinity is bounded by $C(n, \alpha, \delta_0)$.
 \end{proof}
\begin{remark}
By Gromov compactness theorem, we can find $\delta_0=\delta_0(n, \alpha), y_0, z_0$ satisfying (\ref{eq44}), (\ref{eq45}) and (\ref{eq46}). Therefore, the degree of the holomorphic function at infinity is bounded by $C(n, \alpha)$. The dependence on $\alpha$ is obvious necessary if we look at the complex one dimensional case.
\end{remark}

\begin{cor}
Let $M^n$ be a complete K\"ahler manifold with nonnegative bisectional curvature and maximal volume growth. Then the transcendental dimension of polynomial growth holomorphic functions is $n$. Moreover, $\mathcal{O}_P(M)$ separates points and tangents on $M$.
\end{cor}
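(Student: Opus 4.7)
The upper bound on the transcendental dimension is immediate from Theorem \ref{thm9}: if $u_1,\dots,u_{n+1}\in\mathcal{O}_P(M)$ were algebraically independent with joint growth exponent $d_0$, then the $\binom{K+n+1}{n+1}\sim K^{n+1}$ monomials of total degree at most $K$ in the $u_j$ would be linearly independent elements of $\mathcal{O}_{d_0K}(M)$, contradicting $\dim\mathcal{O}_{d_0K}(M)\le\dim\mathcal{O}_{d_0K}(\mathbb{C}^n)\sim K^n$ for $K$ large. Hence only the lower bound and the two separation statements require proof, and I plan to obtain both by running variants of the construction in the proof of Theorem \ref{thm3}.

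For the lower bound, keep the weight $v_{i,1}$, the Stein domain $\Omega_i$, the three-circle normalization and the subsequence argument of Theorem \ref{thm3} untouched, but make two changes to the Hörmander input: (i) raise the coefficient of $\log(\sum_s|w^i_s|^2)$ in $q_i$ from $4n$ to some $c>n$, so that every Hörmander solution of finite $e^{-q_i}$-weighted energy vanishes to second order at $y_i$; and (ii) replace the single scalar seed $f_i\equiv 1$ near $y_i$ by the $n$ vector-valued seeds $f^{(s)}_i=w^i_s\,\lambda(\cdot)$, $s=1,\dots,n$, built from the local chart at $y_i$ supplied by Proposition \ref{prop1}. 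The resulting holomorphic $\mu^{(s)}_i=f^{(s)}_i-h^{(s)}_i$ satisfy $d\mu^{(s)}_i(y_i)=dw^i_s(y_i)$, which are $n$ linearly independent covectors. Passing to the subsequential limit as in the proof of Theorem \ref{thm3} produces $v^{(1)},\dots,v^{(n)}\in\mathcal{O}_P(M)$, and their Cheeger--Colding blow-down on the tangent cone agrees near the regular point $y_0$ with the Euclidean coordinates. Any polynomial relation $P(v^{(1)},\dots,v^{(n)})\equiv 0$ on $M$ would force the top-degree homogeneous part of $P$ to give a polynomial relation among coordinate functions on the tangent cone, which is impossible; hence the $v^{(s)}$ are algebraically independent.

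For the separation statements, the plan is to rerun the same construction with the tangent-cone marked points $y_i,z_i$ replaced by the points of interest on $M$, exploiting that $M$ is smooth so that Proposition \ref{prop1} applies at any point of $M$ at a sufficiently small scale. For distinct $p_1,p_2\in M$, use seeds $f=1$ near $p_1$ and $f=0$ near $p_2$ with logarithmic singularities at both in the weight; the Hörmander solution vanishes at both, so the limit function in $\mathcal{O}_P(M)$ satisfies $v(p_1)\ne v(p_2)$. For tangent separation at $p$, use the single-point jet-matching scheme above centered at $p$ itself, which produces $n$ holomorphic functions whose differentials at $p$ span $T^{*1,0}_pM$. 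The main technical obstacle is that a Proposition \ref{prop1} chart built at a \emph{fixed} smooth point of $M$ has gradient of order $s_i$ in the rescaled $g_i$ metric, so $|\partial\overline\partial q_i|_{g_i}$ on the cutoff annulus becomes of order $s_i^2$, no longer absorbed by the $O(1)$ strict positivity of $\sqrt{-1}\partial\overline\partial v_{i,1}$. The fix I would pursue is to couple the two scales by taking the Proposition \ref{prop1} chart scale $r_i\asymp 1/s_i$ at the marked point, so that in the $g_i$ metric the chart has unit size and all Hessians remain bounded; verifying that the Hörmander $L^2$ estimate with this coupled weight still produces a uniformly bounded pointwise solution whose value (respectively, first-order jet) at the marked point survives the three-circle subsequence limit is the substantive technical content of the argument, after which the remaining normalization and convergence steps proceed verbatim from the proof of Theorem \ref{thm3}.
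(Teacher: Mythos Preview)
The paper's route is much shorter and avoids the scaling obstacle you identify. Theorem \ref{thm3} supplies one nonconstant $f\in\mathcal{O}_P(M)$; then Ni--Tam's heat flow (theorem 3.1 in \cite{[NT1]}) applied to $\log(|f|^2+1)$ produces a \emph{global} strictly plurisubharmonic function of logarithmic growth on $M$ (when the universal cover is irreducible; otherwise work factor by factor and symmetrize under the finite deck group). With this global weight, H\"ormander's $L^2$ estimate on all of $M$ separates any prescribed points and tangents directly---no blow-down, no three-circle subsequence, no scale coupling. Tangent separation at a single point then yields $n$ functions with linearly independent differentials there, hence algebraically independent, and together with the upper bound from Theorem \ref{thm9} the transcendental dimension is $n$.

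Your proposed separation argument has a genuine gap that the suggested fix does not close. Placing the singularities of $q_i$ at fixed points of $M$ while running H\"ormander on the blow-down domain $\Omega_i\subset(M,g_i)$ forces the cutoff annulus to sit at $g_i$-scale $\asymp 1/s_i$; the negative part of $\sqrt{-1}\partial\overline\partial q_i$ there is of order $s_i^2\,\omega_i$, which cannot be absorbed by $\sqrt{-1}\partial\overline\partial v_{i,1}\ge c(n,v)\,\omega_i$. Shrinking the Proposition \ref{prop1} chart does not help: any chart centered at a fixed smooth point of $M$ has $g$-radius bounded by some $r_0$, hence $g_i$-radius $\le r_0/s_i$, and the Hessian mismatch persists. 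What is missing is precisely a strictly plurisubharmonic function of logarithmic growth on all of $M$, which is exactly what the heat flow provides. Your algebraic-independence argument has a parallel gap: after normalizing so that $\int_{B(p,2)}|\overline\nu_i^{(s)}|^2=1$ and passing to a subsequential limit, the link between the $v^{(s)}$ and the differential data $d\mu_i^{(s)}(y_i)=dw_s^i(y_i)$ is lost, and there is no justification for the claim that a further blow-down of the $v^{(s)}$ recovers Euclidean coordinates near $y_0$.
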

\begin{proof}
From theorem \ref{thm3}, there exists a nonconstant holomorphic function $f$ of polynomial growth. First we assume the universal cover of $M$ does not split as products. Then by theorem $3.1$ in \cite{[NT1]},  if we run the heat flow for $\log(|f|^2+1)$, the function becomes strictly plurisubharmonic of logarithmic growth. Then we can apply Hormander's $L^2$ estimate(for example, theorem $5.2$ in \cite{[N1]}) to conclude that $\mathcal{O}_P(M)$ separates points and tangents on $M$. Together with the multiplicity estimate theorem \ref{thm9}, we proved that the transcendental dimension of holomorphic functions of polynomial growth over $\mathbb{C}$ is $n$. 
If the universal covering splits, we work on the universal covering space. Each factor must be of maximal volume growth. Then we can find nonconstant holomorphic functions of polynomial growth. Then we run the heat flow for each factor to obtain strictly plurisubharmonic functions of logarithmic growth. Then we add these function together, which is still strictly plurisubharmonic. Finally, to put these functions back to $M$, just observe that $\pi_1(M)$ is finite, then we can symmetrize the function. Then it projects to $M$, still with logarithmic growth.
Then the argument is the same for the nonsplitting case.

\end{proof}

\begin{remark}
In this case, one can actually prove $M^n$ is biholomorphic to a quasi-affine variety. This follows from Mok's deep work in \cite{[Mo]}. However, with the aid of the theorem below, we shall give a direct proof that $M^n$ is biholomorphic to an affine algebraic variety. 
\end{remark}

\section{\bf{A properness theorem}}
\begin{theorem}\label{thm4}
Let $M^n$ be a complete noncompact K\"ahler manifold with nonnegative bisectional curvature and maximal volume growth. Then there exist finitely many holomorphic functions of polynomial growth $f_1, .... , f_N$ so that $\sum\limits_{i=1}^N|f_i(x)|^2 \geq cd(x, p)^2$. Here $p$ is a fixed point on $M$ and $c>0$ is a constant independent of $x$.
\end{theorem}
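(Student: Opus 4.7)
The goal is a finite collection $\{f_1,\dots,f_N\}\subset\mathcal{O}_P(M)$ with $\sum|f_i|^2\geq c\,d(\cdot,p)^2$. By Theorem~\ref{thm9} each space $\mathcal{O}_d(M)$ is finite dimensional, so it suffices to produce a single integer $D=D(n,\alpha)$ and a constant $c=c(n,\alpha)>0$ such that for every $x\in M$ with $r:=d(x,p)$ sufficiently large, some $f\in\mathcal{O}_D(M)$ satisfies $|f(x)|\geq c\,r$; a finite basis of $\mathcal{O}_D(M)$ then serves as $\{f_i\}$. Here $\alpha$ is the asymptotic volume ratio in \eqref{eq42}.

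\textbf{Induction setup.} I would induct on the \emph{Euclidean splitting dimension} of tangent cones. By Cheeger--Colding every tangent cone at infinity is a metric cone $C(Y)$; by Cheeger--Colding--Tian any isometric Euclidean factor splits off as $\mathbb{C}^{k}\times C(Y')$ where $C(Y')$ contains no line. Let $k_{\max}(M)\in\{0,1,\dots,n\}$ be the largest $k$ realized by any tangent cone at infinity of $M$ (and, in an iterated sense, by tangent cones of those cones at smooth points). The induction is \emph{decreasing} in $n-k_{\max}$.

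\emph{Base case $k_{\max}=n$.} Every tangent cone is $\mathbb{C}^n$, so the argument of the gap theorem in Section~4 applies on each blow--down $(M,p,s_i^{-2}g)$: Proposition~\ref{prop1} yields holomorphic charts $(w_1^i,\dots,w_n^i)$ of uniform size at $p_i$; after normalization and taking limits, Theorem~\ref{thm8} and Corollary~\ref{cor1} produce $v_1,\dots,v_n\in\mathcal{O}_P(M)$ and the quadratic lower bound \eqref{eq41} gives $\sum|v_s|^2\geq c(n,\alpha)r^2$.

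\emph{Inductive step.} Assume the statement for all manifolds with $k_{\max}>k$ and treat the case $k_{\max}=k<n$. Given $x$ with $r$ large, choose a blow-down sequence $r_i\to\infty$ along which $(M,p,r_i^{-2}g)\to X=\mathbb{C}^{k}\times C(Y')$, with $x\mapsto x_\infty$. There are two sub-cases. \emph{(A)} If $x_\infty$ has nonzero component in the $\mathbb{C}^{k}$ factor at distance $\gtrsim 1$ from the axis: at a regular point on that axis apply Proposition~\ref{prop1} in the rescaled metric to get $k$ holomorphic coordinates approximating the linear coordinate on $\mathbb{C}^{k}$; then run the Hörmander argument of Section~5 (construct the plurisubharmonic weight $\psi_i$ as in \eqref{eq67} using Theorem~\ref{thm6}, solve $\bar\partial$ via Theorem~\ref{thm7}, and extract a limit by Theorem~\ref{thm8}). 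This produces $f\in\mathcal{O}_D(M)$ with $|f(x)|\gtrsim r$. \emph{(B)} If $x_\infty\in C(Y')$: the cone line through $x_\infty$ from the vertex gives, by Cheeger--Colding splitting and Cheeger--Colding--Tian, an additional $\mathbb{C}$-factor in the tangent cone of $X$ at $x_\infty$. Blowing down $M$ at the \emph{new} scale determined by $x_\infty$ in $X$ produces a tangent cone with strictly larger Euclidean splitting, so the inductive hypothesis yields the desired $f$ at the second scale; the three circle theorem then bootstraps the estimate back to the original scale of $x$ on $M$.

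\textbf{Main obstacle.} The hardest step is the globalization in (A)/(B): extracting from a merely \emph{local} holomorphic model (valid on a ball in the blow-down) a genuinely \emph{polynomial growth} function on $M$ with the correct quantitative value at $x$. This is exactly the tension between Hörmander's $L^2$ method (which is local and needs a good weight) and the three circle theorem (which controls growth at all scales but demands a single function defined on a suitable exhaustion). The weight must blow up to force vanishing of the correction $h_i$ at two well-separated regular points (as in the proof of Theorem~\ref{thm3}) and simultaneously be uniformly controlled at $x$ so that the limit function has $|f(x)|\gtrsim r$. A second, more subtle difficulty is enforcing a \emph{uniform} degree bound $D=D(n,\alpha)$ across all $x$, independent of the depth of nested blow-downs used in Case~(B); compactness of the space of tangent cones (via Gromov's theorem) and rigidity of the three circle monotonicity are the tools I would use to secure this uniformity.
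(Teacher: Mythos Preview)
Your plan assembles the right ingredients (Proposition~\ref{prop1}, the heat-flow weight of Theorem~\ref{thm6}, H\"ormander's Theorem~\ref{thm7}, the three circle Theorem~\ref{thm8}) and the right intuition: induct on the Euclidean splitting of the relevant tangent cone. But the induction as formulated does not close. Your parameter $k_{\max}(M)$ is a global invariant of the fixed manifold $M$; in Case~(B) you remain on the \emph{same} $M$ and only change scale, so $k_{\max}$ is unchanged and the hypothesis ``assume the statement for manifolds with $k_{\max}>k$'' is simply unavailable. (The parenthetical about iterated tangent cones at smooth points does not help: those are all $\mathbb{C}^n$, which would force $k_{\max}=n$ in every case; and note that $k_{\max}=n$ already forces $M\cong\mathbb{C}^n$ by Bishop--Gromov rigidity, so your base case is vacuous rather than the genuine starting point you want.) The difficulty you correctly flag at the end---a uniform degree bound independent of the depth of nested blow-downs---is exactly this: you are recursing over scales on a fixed $M$, and that recursion is not well-founded.

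The paper resolves this by decoupling the two tasks you attempt simultaneously. First it proves a purely \emph{local} inductive statement, Proposition~\ref{prop3}: whenever a ball $B(q,\tfrac{1}{\epsilon}R)$ in \emph{any} K\"ahler manifold with nonnegative bisectional curvature and volume ratio $\geq v$ is GH-close to a cone, there exist $N(n,v)$ holomorphic functions on $B(q,\delta_1 R)$ whose sum of squares on $\partial B(q,\tfrac{\delta_1}{3}R)$ exceeds twice its maximum on an inner ball, together with the growth ratio \eqref{eq86}. The induction variable is the splitting of the \emph{model cone for that particular ball}, not an invariant of the ambient manifold; the bad set $E$ where the $\mathbb{C}^{s-1}$ coordinates are small is covered by finitely many smaller balls, each GH-close to a cone splitting off $\mathbb{R}^{2s}$, and the hypothesis applies to those balls inside the same manifold---so the induction terminates in at most $n$ steps with all constants depending only on $(n,v)$. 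Second, globalization is a separate contradiction argument, Proposition~\ref{prop4}: the local functions from Proposition~\ref{prop3} are extended across a Stein exhaustion using a logarithmic-growth weight $\tau_{i,t_0}$, yielding $f^i_j$ with the uniform polynomial bound \eqref{eq163}; if the fixed basis of $V=\{g\in\mathcal{O}_{2C}(M):g(p)=0\}$ failed the estimate along a sequence $x_i$, Claim~\ref{cl6} would force the limits $f^\infty_j$ into the span of the rescaled basis of $V$, and \eqref{eq162} then contradicts the assumed smallness at $x_i$. This separation---local induction on the model cone, then a single global dimension-counting step---is precisely what supplies the uniform degree $D$ that your scheme is missing.
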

\begin{proof}
Put \begin{equation}\label{eq83}v = \lim\limits_{r\to\infty}\frac{vol(B(p, r))}{r^{2n}}>0.\end{equation} 
\begin{prop}\label{prop3}
Let $(Y^n, q)$ be a complete K\"ahler manifold with nonnegative bisectional curvature. Assume $\frac{vol(B(q,  r))}{r^{2n}}\geq v>0$ for all $r>0$. There exists $\epsilon(n, v)>0$ so that if $\epsilon<\epsilon(n, v)$ and if there exists a metric cone $(X, o)$($o$ is the vertex) with \begin{equation}\label{eq84}d_{GH}(B(q, \frac{1}{\epsilon}R), B_{X}(o, \frac{1}{\epsilon}R))\leq \epsilon R,\end{equation} then there exist $N = N(v, n)\in\mathbb{N}, 1 >\delta_1> 5\delta_2>\delta(v, n)>0$ and holomorphic functions $g^1, ..., g^N$ in $B(q, \delta_1R)$ with $g^j(q) = 0$ and \begin{equation}\label{eq85}\min\limits_{x\in\partial B(q, \frac{\delta_1}{3}R)}\sum\limits_{j=1}^N|g^j(x)|^2> 2\max\limits_{x\in B(q, \delta_2R)}\sum\limits_{j=1}^N|g^j(x)|^2.\end{equation} Furthermore, for all $j$, \begin{equation}\label{eq86}\frac{\max\limits_{x\in B(q, \frac{1}{2}\delta_1R)}|g^j(x)|^2}{\max\limits_{x\in B(q, \frac{1}{3}\delta_1R)}|g^j(x)|^2}\leq C(n, v).\end{equation} \end{prop}
\begin{proof}
It is clear the proposition is independent of the value of $R$. Then, by scaling, we may assume $R$ is sufficiently large, to be determined.
Assume \begin{equation}\label{eq87}X= \mathbb{R}^k\times Z.\end{equation} We will do induction on $k$. For the case $k = 2n$, the proposition reduces to proposition \ref{prop1}. Assume the proposition holds for $k = 2s$ and fails for $k = 2s-2$. Then there exist complete K\"ahler manifolds $(Y_i^n, q_i)(i\in\mathbb{N})$ with nonnegative bisectional curvature and $\frac{vol(B(q_i, r))}{r^{2n}}\geq v>0$ for all $r>0$; metric cones $(X_i, o_i)$ with \begin{equation}\label{eq88}(X_i, o_i) = (\mathbb{R}^{2s-2}, 0)\times (Z_i, z_i^*); d_{GH}(B(q_i, iR), B_{X_i}(o_i, iR))\leq \Phi(\frac{1}{i}|R).\end{equation} But the proposition fails to hold uniformly for any subsequence of $Y_i$.  By passing to a subsequence, we may assume $(X_i, o_i)$ converges in the pointed Gromov-Hausdorff sense to a metric cone $(X_0, o_0)$. Of course, there exists a sequence $s_i\to\infty$ with \begin{equation}\label{eq89}(X_0, o_0) = (\mathbb{R}^{2s-2}, 0)\times (Z_0, z_0^*); d_{GH}(B(q_i, s_iR), B_{X_0}(o_0, s_iR))<\Phi(\frac{1}{i}|R).\end{equation} $Z_0$ does not split off a factor $\mathbb{R}^2$, by induction hypothesis.  
Similar to the construction in (\ref{eq61}), we have a function $v_{i, 1}$ so that in $B(q_i, 10R)$, \begin{equation}\label{eq90}\sqrt{-1}\partial\overline\partial v_{i, 1}\geq c(n, v)\omega_i>0;\end{equation}
\begin{equation}\label{eq91}\min\limits_{y\in B(q_i, \frac{1}{2}R)\backslash B(q_i, \frac{1}{4}R)}v_{i, 1}(y)> 4\max\limits_{y\in B(q_i, \delta_3R)}v_{i, 1}(y);\end{equation} \begin{equation}\label{eq92}\min\limits_{y\in B(q_i, \frac{\delta_3}{2}R)\backslash B(q_i, \frac{\delta_3}{4}R)}v_{i, 1}(y)> 4\max\limits_{y\in B(q_i, \delta_4R)}v_{i, 1}(y).\end{equation} Here $\delta_s = \delta_s(n, v)>0(s = 3, 4)$. By proposition \ref{prop5}, we may also assume \begin{equation}\label{eq93}4\max\limits_{y\in B(q_i, \delta_4R)}v_{i, 1}(y)>\frac{1}{2}; \delta_3R>100.\end{equation} Now we freeze the value of $R$. That is to say, \begin{equation}\label{eq94}R=R(n, v)>0.\end{equation} Then \begin{equation}\label{eq95}|v_{i, 1}(y)|\leq C(R, n, v) = C(n, v), y\in B(q_i, R).\end{equation}Let $\Omega_i$ be the connected component of $\{z|v_{i, 1}(z)<\max\limits_{B(q_i, \delta_3R)}v_{i, 1}\}$ containing $B(q_i, \delta_3R)$. As before, we see $\Omega_i$ is Stein.

According to (\ref{eq89}) and ($2.4$)-($2.11$) in \cite{[CCT]}, there exist harmonic functions $b^i_l(1\leq l\leq 2s-2)$ in $B(q_i, 2R)$ with \begin{equation}\label{eq96}\int_{B(q_i, R)}\sum\limits_{1\leq l_1, l_2\leq 2s-2}|\langle\nabla b^i_{l_1}, \nabla b^i_{l_2}\rangle - \delta_{l_1l_2}|^2 +\sum\limits_{l}|\nabla^2 b^i_l|^2<\Phi(\frac{1}{i}|n);\end{equation}\begin{equation}\label{eq97}b^i_l(q_i) = 0; |\nabla b^i_l|\leq C(n) \end{equation} in $B(q_i, R)$. Moreover, in $B(q_i, R)$, $(b^i_1, ..., b^i_{2s-2})$ approximates the $(y_1, ... ,y_{2s-2})$ with error $\Phi(\frac{1}{i}|n)$. Here $(y_1, ..., y_{2s-2})$ is the Euclidean coordinate in $(X_0, o_0) = (\mathbb{R}^{2s-2}, 0)\times (Z_0, z_0^*)$.
By similar arguments as before, we may assume that \begin{equation}\label{eq98}\int_{B(q_i, R)} |J\nabla b^i_{2m-1} - \nabla b^i_{2m}|^2 \leq \Phi(\frac{1}{i}|n)\end{equation}
for $1\leq m\leq s-1$. Set $\tilde{w}^i_m = b^i_{2m-1} +\sqrt{-1}b^i_{2m}$. Then \begin{equation}\label{eq99}\int_{B(q_i, R)}|\overline\partial\tilde{w}^i_m|^2\leq \Phi(\frac{1}{i}|n). \end{equation}  Then by solving $\overline\partial$ problem as before, we find holomorphic functions $w^i_m(1\leq m\leq s-1)$ with \begin{equation}\label{eq100}w^i_m(q_i) = 0; |w^i_m - \tilde{w}^i_m|\leq\Phi(\frac{1}{i}|n)  \end{equation} in $B(q_i, \frac{R}{2})$. Recall $\delta_3$ appeared in (\ref{eq92}).
For sufficiently large $i$, define \begin{equation}\label{eq101}E_i = \{x|x\in\partial B(q_i, \frac{\delta_3R}{3}), \sum\limits_{m=1}^{s-1}|w^i_m|^2 \leq \frac{(\delta_3R)^2}{27}\}; \end{equation} 
  \begin{equation}\label{eq102}E = \{x|x\in\partial B_{\mathbb{R}^{2s-2}\times Z_0}((0, z_0^*), \frac{\delta_3R}{3}), \sum\limits_{k=1}^{2s-2}|y_k|^2 \leq \frac{(\delta_3R)^2}{18}\}.\end{equation} Then the limit of $E_i$ is contained in $E$ under the Gromov-Hausdorff approximation. Observe from the definition and (\ref{eq93}), if $x\in \partial B(q_i, \frac{\delta_3R}{3})\backslash E_i$, \begin{equation}\label{eq103}\sum\limits_{m=1}^{s-1}|w^i_m(x)|^2 > \frac{(\delta_3R)^2}{27}>1.\end{equation}

For $x\in E$, let $C_x$ be a tangent cone. Then $C_x$ must split off a factor $\mathbb{R}^{2s-1}$. Since $C_x$ is the Gromov-Hausdorff limit of K\"ahler manifolds with noncollapsed volume and nonnegative Ricci curvature, $C_x$ splits off a factor $\mathbb{R}^{2s}$, by \cite{[CCT]}. Thus there exists \begin{equation}\label{eq104}
\frac{\delta_3R}{20}>r_x>0\end{equation} with \begin{equation}\label{eq105}d_{GH}(B(x, \frac{1}{\epsilon}r_x), B_{W} (w, \frac{1}{\epsilon}r_x))<\epsilon r_x; (W, w) = (\mathbb{R}^{2s}, 0)\times (H, h^*).\end{equation} Here $\epsilon<\epsilon(n, v)$ satisfies proposition \ref{prop3} for the case $X$ splits off $\mathbb{R}^{2s}$; $(H, h^*)$ is a metric cone with vertex at $h^*$.  By compactness, there is a uniform positive lower bound of $r_x$, say \begin{equation}\label{eq106}r_x\geq R_0(X_0)>0\end{equation} By Gromov compactness, we actually have \begin{equation}\label{eq107}r_x> R_0=R_0(n, v)>0. \end{equation}

Then for sufficiently large $i$ and any point $x_i\in E_i$,  \begin{equation}\label{eq108}d_{GH}(B(x_i, \frac{1}{\epsilon}r_{x_i}), B_{W_i} (w_i, \frac{1}{\epsilon}r_{x_i}))<\epsilon r_{x_i}.\end{equation} \begin{equation}\label{eq109}\frac{\delta_3R}{20}>r_{x_i}\geq R_0>0;  (W_i, w_i)= (\mathbb{R}^{2s}, 0)\times (H_i, h_i^*).\end{equation}  Here $\epsilon$ is the same as in (\ref{eq105}); $(H_i, h_i^*)$ is a metric cone with vertex at $h_i^*$. We apply the induction to $B(x_i, \frac{1}{\epsilon}r_{x_i})$.  By induction hypothesis, there exist \begin{equation}\label{eq110}1>\delta_1>5\delta_2>\delta(n, v); N=N(v, n)\in\mathbb{N}\end{equation} and holomorphic functions $g^j_i(1\leq j\leq N)$ in $B(x_i, \delta_1r_{x_i})$ with 
\begin{equation}\label{eq111}
g_i^j(x_i) = 0; \min_{x\in\partial B(x_i, \frac{\delta_1r_{x_i}}{3})}\sum\limits_{i=1}^N|g^j_i(x)|^2> 2\max_{x\in B(x_i, \delta_2r_{x_i})}\sum\limits_{i=1}^N|g^j_i(x)|^2;\end{equation}
\begin{equation}\label{eq112}
\frac{\max\limits_{x\in B(x_i, \frac{1}{2}\delta_1r_{x_i})}|g_i^j(x)|^2}{\max\limits_{x\in B(x_i, \frac{1}{3}\delta_1r_{x_i})}|g_i^j(x)|^2}\leq C(n, v).
\end{equation} By normalization, we can also assume \begin{equation}\label{eq113}\max\limits_{j}\max\limits_{y\in B(x_i, \delta_2r_{x_i})}|g^j_i(y)| = 2.\end{equation} 

Note by three circle theorem, \begin{equation}\label{eq114}\max\limits_{y\in B(x_i, \frac{r_{x_i}\delta_1}{2})}|g^j_i(y)| \leq C(n, v).\end{equation} Set \begin{equation}\label{eq115}F_i(x) = \sum\limits_{j=1}^N|g^j_i|^2.\end{equation} Let $\lambda$ be a standard cut-off function: $\mathbb{R}^+\to\mathbb{R}^+$ given by $\lambda(t) = 1$ for $0\leq t\leq 1$; $\lambda(t) = 0$ for $t\geq 2$; $|\lambda'|, |\lambda''|\leq C(n)$.
Consider \begin{equation}\label{eq116}h_i(x) = 4n\log F_i(x)\lambda(F_i(x)).\end{equation} By (\ref{eq111}) and (\ref{eq113}), $h_i(x)$ is supported in $B(x_i, \frac{\delta_1r_{x_i}}{3})$. Similar to lemma \ref{lm1}, It is easy to check that \begin{equation}\label{eq117}\sqrt{-1}\partial\overline\partial h_i(x)\geq -C(n, v)\omega_i.\end{equation} Therefore, there exists $\xi = \xi(n, v)>0$ with \begin{equation}\label{eq118}\sqrt{-1}\partial\overline\partial(\xi v_{i, 1}+h_i)\geq \omega_i\end{equation} in $\Omega_i$. We will assume such $\xi$ is large, which will be determined later.  Set \begin{equation}\label{eq119}\phi(x) = \xi v_{i, 1}(x)+h_i(x).\end{equation}  Now consider a function \begin{equation}\label{eq120}\mu_i(x) = \varphi(\frac{d(x, x_i)}{\delta_1r_{x_i}}).\end{equation} Here $\varphi(t) = 1$ for $t\leq \frac{1}{3}$; $\varphi(t) = 0$ for $t\geq 1$ and $|\varphi'|\leq C(n)$. Then it is clear $\mu_i$ is supported in $B(x_i, \delta_1r_{x_i})$. Also, by (\ref{eq109}), \begin{equation}\label{eq121}|\nabla\mu_i|\leq C(n, v).\end{equation}
We solve the $\overline\partial$ problem $\overline\partial s_i = \overline\partial\mu_i$ in $\Omega_i$ satisfying \begin{equation}\label{eq122}\begin{aligned}\int_{\Omega_i}e^{-\phi}|s_i|^2&\leq \int_{\Omega_i}e^{-\phi}|\overline\partial\mu_i|^2 \\&= \int_{B(x_i, \delta_1r_{x_i})\backslash B(x_i, \frac{\delta_1r_{x_i}}{3})}e^{-\phi}|\overline\partial\mu_i|^2\\&\leq \exp(-\xi\min\limits_{y\in B(q_i, \frac{\delta_3R}{2})\backslash B(q_i, \frac{\delta_3R}{4})}v_{i, 1}(y))C(n, v).\end{aligned}\end{equation} 

Here we used that $h_i$ is supported in $B(x_i, \frac{\delta_1r_{x_i}}{3})$. We also used that \begin{equation}\label{eq123}B(x_i, \delta_1r_{x_i})\subset B(q_i, \frac{\delta_3R}{2})\backslash B(q_i, \frac{\delta_3R}{4}),\end{equation} by (\ref{eq109}).
Observe by (\ref{eq92}), $\mu_i$ vanishes in $B(q_i, \frac{1}{2}\delta_4R)$.
Hence $s_i$ is holomorphic in $B(q_i, \frac{1}{2}\delta_4R)$. Mean value inequality implies for $x\in B(q_i, \frac{\delta_4R}{5})$, \begin{equation}\label{eq124}\begin{aligned}|s_i(x)|&\leq \frac{\int_{B(q_i, \delta_4R)}|s_i|^2}{c(n, v)(\delta_4R)^{2n}}\\&\leq \exp(\xi\max\limits_{y\in B(q_i, \delta_4R)}v_{i, 1}(y))\frac{\int_{\Omega_i}e^{-\phi}|s_i|^2}{c(n, v)(\delta_4R)^{2n}}\\&\leq \exp(-\xi(\min\limits_{y\in B(q_i, \frac{\delta_3R}{2})\backslash B(q_i, \frac{\delta_3R}{4})}v_{i, 1}(y)-\max\limits_{y\in B(q_i, \delta_4R)}v_{i, 1}(y)))\frac{1}{c(n, v)(\delta_4R)^{2n}}\\&\leq \frac{e^{-\frac{\xi}{4}}}{c(n, v)(\delta_4R)^{2n}}.\end{aligned}\end{equation}
Here we used (\ref{eq92}) and (\ref{eq93}).
If $\xi$ is large (depending only on $n, v$), then we can make \begin{equation}\label{eq125}|s_i(x)|\leq \frac{1}{10}\end{equation} for $x\in B(q_i, \frac{\delta_4R}{5})$. Now we freeze the value of $\xi = \xi(n, v)$. Note that the local integrability of $s_i$ forces $s_i(x_i) = 0$. Set \begin{equation}\label{eq126}w_i^1(x) = \mu_i(x)-s_i(x).\end{equation} Then \begin{equation}\label{eq127}w_i^1(x_i) = 1; |w_i^1(x)|\leq \frac{1}{10}\end{equation} in $B(q_i, \frac{\delta_4R}{5})$. Set \begin{equation}\label{eq128}f_i^1(x) = w_i^1(x) - w_i^1(q_i).\end{equation} Then \begin{equation}\label{eq129}f_i^1(q_i) = 0; |f_i^1(x_i)|\geq \frac{9}{10}.\end{equation} By (\ref{eq122}), we find \begin{equation}\label{eq130}|f_i^1(x)|\leq C(n, v), |\nabla f_i^1(x)|\leq C(n, v), x\in B(q_i, \frac{2\delta_3R}{3}).\end{equation} 

Therefore, there exists $\delta_5(n, v)>0$ so that \begin{equation}|\label{eq131}f_i^1(x)|\geq \frac{1}{2}\end{equation}  in $B(x_i, \delta_5R)$.
We can take $x^j\in E$ with $j = 1, 2, ... , K$, $K=K(v, n)$. Also \begin{equation}\label{eq132}\cup_j B(x^j, \frac{\delta_5R}{3})\supset E; \frac{\delta_3R}{20}>r_{x^j}\geq R_0(n, v)>0;\end{equation}   \begin{equation}\label{eq133}d_{GH}(B(x^j, \frac{1}{\epsilon}r_{x^j}), B_{W^j} (w^j, \frac{1}{\epsilon}r_{x^j}))<\epsilon r_{x^j}; (W^j, w^j) = (\mathbb{R}^{2s}, 0)\times (H^j, (h^j)^*).\end{equation} Here $(H^j, (h^j)^*)$ is a metric cone with vertex at $(h^j)^*$. 
Then for sufficiently large $i$, we can find $x^j_i\in E_i$, $j =1, ..., K$ with \begin{equation}\label{eq134}d_{GH}((B(x^j_i, \frac{1}{\epsilon}r_{x^j}), B_{W^j} (w_j, \frac{1}{\epsilon}r_{x^j}))<\epsilon r_{x^j};\end{equation} \begin{equation}\label{eq135}\cup_j B(x_i^j, \frac{\delta_5R}{2})\supset E_i.\end{equation}
Now we can apply the induction argument above for each geodesic ball $B(x_i^j, \frac{1}{\epsilon}r_{x^j})$. We obtain holomorphic functions $f_i^j$ in $B(q_i, \delta_3R)$ satisfying \begin{equation}\label{eq136}|f_i^j(x)|\geq\frac{1}{2}\end{equation} for $x\in B(x^j_i, \delta_5R)$; \begin{equation}\label{eq137}|f_i^j(x)|\leq C(n, v), x\in B(q_i, \frac{2\delta_3R}{3}); f_i^j(q_i) = 0.\end{equation}

 Put $G_i(x) = \sum\limits_{j=1}^K|f_i^j|^2+\sum\limits_{m=1}^{s-1}|w^i_m|^2$. Then by (\ref{eq100}), (\ref{eq103}) and (\ref{eq137}), \begin{equation}\label{eq138}|\nabla G_i(x)|\leq C(n, v), x\in B(q_i, \frac{\delta_3R}{2}); G_i(q_i) = 0;\end{equation}\begin{equation}\label{eq139} |G_i(x)|\geq \frac{1}{4}, x\in \partial B(q_i, \frac{\delta_3R}{3}).\end{equation} Therefore, there exists $\delta_6 = \delta_6(n, v)>0$ with \begin{equation}\label{eq140}\max\limits_{x\in B(q_i, \delta_6R)}|G_i(x)|\leq\frac{1}{10}.\end{equation} This contradicts the assumption that the proposition does not hold uniformly for $(Y_i^n, q_i)$. The proof of proposition \ref{prop3} is complete.
\end{proof}

We continue the proof of theorem \ref{thm4}. For any sequence $r_i\to \infty$, Set $(M_{i}, p_{i}) = (M, p, r_i^{-2}g)$(we shall make $r_i$ explicit in proposition \ref{prop4} below). Then there exist $R''_i\to\infty$ and metric cones $(X_{i}, x_i^*)$($x_i^*$ is the vertex) with \begin{equation}\label{eq141}d_{GH}(B(p_{i}, R''_i), B_{X_i}(x_i^*, R''_i))<\frac{1}{R''_i}.\end{equation} Let $d_i(x) = d_i(x, p_i)$ for $x\in M_i$.
Following the construction in (\ref{eq52}) and claim \ref{cl5}, we find a sequence $R'_i\to\infty$, functions $\rho_i$ in $M_i$ satisfying 
\begin{equation}\label{eq142}
\int_{B(p_i, 4R'_i)}|\nabla \rho_i-\nabla \frac{1}{2}d_i^2|^2 + |\nabla^2\rho_i-g_i|^2<\Phi(\frac{1}{i}).
\end{equation}
Also, in $B(p_i, 4R'_i)$, \begin{equation}\label{eq143}
|\rho_i-\frac{d_i^2}{2}|<\Phi(\frac{1}{i}); |\nabla \rho_i|\leq C(n)d_i.\end{equation}
As before, consider a smooth function $\varphi$: $\mathbb{R}^+\to \mathbb{R}^+$ with $\varphi(t) = t$ for $0\leq t\leq 1$; $\varphi(t) = 0$ for $t\geq 2$; $|\varphi|, |\varphi'|, |\varphi''|\leq C(n)$. Set \begin{equation}\label{eq144} v_i(z)=3(R'_i)^2\varphi(\frac{\rho_i(z)}{3(R'_i)^2})\end{equation} Then $v_i$ is supported in $B(p_i, 4R'_i)$. Let $H_i(x, y, t)$ be the heat kernel of $M_i$.
Consider the function $\tau_i(x) =\log (1+v_i(x))$ and define \begin{equation}\label{eq145}\tau_{i, t}(z) = \int_{M_i} H_i(z, y, t)\tau_i(y)dy.\end{equation}
Let $\delta_1 = \delta_1(n, v)$ be given by proposition \ref{prop3}. By (\ref{eq143}), we have \begin{equation}\label{eq146}\min\limits_{B(p_i, \frac{3}{2})\backslash B(p_i, \frac{30}{24})}\tau_i -\max\limits_{B(p_i, \frac{27}{24})}\tau_i\geq 2c(n, v)>0.\end{equation}
$\tau_i$ is of logarithmic growth uniform for all $i$. By heat kernel estimates, there exists $t_0 = t_0(n, v)>0$ so that \begin{equation}\label{eq147}\min\limits_{B(p_i, \frac{3}{2})\backslash B(p_i, \frac{30}{24})}\tau_{i, t_0} -\max\limits_{B(p_i, \frac{27}{24})}\tau_{i, t_0}\geq c(n, v)>0. \end{equation}

On a smooth K\"ahler metric cone, let $r$ be the distance function to the vertex. Then $\sqrt{-1}\partial\overline\partial\log(1+\frac{1}{2}r^2)$ is positive $(1, 1)$ form away from the vertex. Since $\tau_i$ resembles $\log(1+\frac{1}{2}r^2)$, by similar arguments as in proposition \ref{prop2}, we find that in $B(p_i, 5)$,
\begin{equation}\label{eq148}
\sqrt{-1}\partial\overline\partial\tau_{i, t_0}\geq c(n, v)>0.
\end{equation}
By proposition \ref{prop5}, for any fixed $R$ and sufficiently large $i$, in $B(p_i, R)$,
\begin{equation}\label{eq149}c(n, v)\log(d_i(x)+2)-C(n, v)\leq\tau_{i, t_0}(x)\leq C(n, v)\log(d_i(x)+2); \end{equation}
\begin{equation}\label{eq150}\sqrt{-1}\partial\overline\partial\tau_{i, t_0}(x)>0, x\in B(p_i, R).\end{equation}
Therefore, there exist sequences $\tilde{R}_i\to\infty, R_i\to\infty, c_i\to\infty$ so that $\tau^{-1}_{i, t_0}(\{c|c\leq c_i\})\cap B(p_i, \tilde{R}_i)$ is relatively compact on $B(p_i, \tilde{R}_i)$. Also \begin{equation}\label{eq151}\tau^{-1}_{i, t_0}(\{c|c\leq c_i\})\cap B(p_i, \tilde{R}_i)\supset B(p_i, R_i);\end{equation} \begin{equation}\label{eq152}\sqrt{-1}\partial\overline\partial\tau_{i, t_0}>0\end{equation} in $\tau^{-1}_{i, t_0}(\{c|c\leq c_i\})\cap B(p_i, \tilde{R}_i)$. Let $\Omega_i$ be the connected component of $\tau^{-1}_{i, t_0}(\{c|c< c_i\})$ containing $B(p_i, R_i)$. Then $\Omega_i$ is a Stein manifold.

According to proposition \ref{prop3}, there exist holomorphic functions $w_j^i(1\leq j\leq K=K(n, v))$ in $B(p_i, 3)$(here we take $R = \frac{3}{\delta_1}$ in proposition \ref{prop3}) so that \begin{equation}\label{eq153}w_j^i(p_i) = 0; \max_j\max\limits_{B(p_i, 1)}|w_j^i| = 1; \end{equation}
 \begin{equation}\label{eq154}\min\limits_{x\in\partial B(p_i, 1)}\sum\limits_{j=1}^K|w_j^i(x)|^2> 2\max\limits_{x\in B(p_i, \frac{3\delta_2}{\delta_1})}\sum\limits_{i=1}^K|w^i_j(x)|^2;\end{equation}  \begin{equation}\label{eq155}\frac{\max\limits_{x\in B(p_i, \frac{3}{2})}|w^i_j(x)|^2}{\max\limits_{x\in B(p_i, 1)}|w^i_j(x)|^2}\leq C(n, v).\end{equation}
 Then of course,  in $B(p_i, \frac{3}{2})$, \begin{equation}\label{eq156}|w^i_j(x)|\leq C(n, v).\end{equation}
 Also, by three circle theorem, we have \begin{equation}\label{eq157}\max_j\max\limits_{B(p_i, \frac{3\delta_2}{\delta_1})}|w_j^i|\geq c(n, v)>0.\end{equation}
 Thus \begin{equation}\label{eq158}\min\limits_{x\in\partial B(p_i, 1)}\sum\limits_{j=1}^K|w_j^i(x)|^2\geq c(n, v)>0. \end{equation}
 Now consider a cut off function $\lambda_i(x) =\lambda(d_i(x))$ with $\lambda_i = 1$ in $B(p_i, \frac{30}{24})$; $\lambda_i$ has compact support in $B(p_i, \frac{33}{24})$; $|\nabla\lambda_i|\leq C(n, v)$.
 Let $\tilde{w}^i_j = \lambda_iw^i_j$. Then $\overline\partial\tilde{w}^i_j$ is supported in $B(p_i, \frac{33}{24})\backslash B(p_i, \frac{30}{24})$. We solve the $\overline\partial$-problem $\overline\partial \tilde{f}^i_j = \overline\partial\tilde{w}^i_j$ in $\Omega_i$ with the weight function $\psi_i = \eta\tau_{i, t_0}$. Here $\eta = \eta(n, v)$ is a very large number to be determined. Then by (\ref{eq148}), we have \begin{equation}\label{eq159}\int_{\Omega_i}|\tilde{f}^i_j|^2e^{-\psi_i}\leq\frac{\int_{\Omega_i}|\overline\partial\tilde{w}^i_j|^2e^{-\psi_i}}{c(n, v)}. \end{equation} This implies that \begin{equation}\label{eq160}\int_{B(p_i, \frac{27}{24})}|\tilde{f}^i_j|^2e^{-\psi_i}\leq\frac{\int_{B(p_i, \frac{3}{2})\backslash B(p_i, \frac{30}{24})}|\overline\partial\tilde{w}^i_j|^2e^{-\psi_i}}{c(n, v)}.  \end{equation}
 Let \begin{equation}\label{eq161}f^i_j(x) = \tilde{w}^i_j(x) -\tilde{f}^i_j(x)-(\tilde{w}^i_j(p_i)-\tilde{f}^i_j(p_i)). \end{equation}
By (\ref{eq147}), (\ref{eq153}), (\ref{eq154}), (\ref{eq156}), (\ref{eq158}) and similar arguments as in (\ref{eq124}), if $\eta=\eta(n, v)$ is large enough, we can make $|\tilde{f}^i_j|$ so small in $B(p_i, 1)$ that 
\begin{equation}\label{eq162}
C(n, v)\geq\min\limits_{x\in\partial B(p_i, 1)}\sum\limits_{j=1}^K|f_j^i(x)|^2> \frac{3}{2}\max\limits_{x\in B(p_i, \frac{3\delta_2}{\delta_1})}\sum\limits_{i=1}^K|f^i_j(x)|^2\geq c(n, v)\end{equation}
Now we freeze the value $\eta = \eta(n, v)$. (\ref{eq149}) says $\psi_i$ is of logarithmic growth uniform for all $i$. By (\ref{eq159}) and the mean value inequality,  we find $C=C(n, v)>0$ so that for any $R>0$, if  $i$ is sufficiently large,
\begin{equation}\label{eq163}
|f_j^i(x)|\leq C(d_i(x)^{C}+1)
\end{equation}
for $x\in B(p_i, R)$.
By passing to subsequence, we can assume $(M_i, p_i)\to (M_\infty, p_\infty)$ in the Gromov-Hausdorff sense.
Also, $f^i_j$ converges to $f^\infty_j$ which is of polynomial growth of order $C$ on $M_\infty$.

\medskip

For $C$ in (\ref{eq163}), let $V = span\{g\in \mathcal{O}_{2C}(M)|g(p) = 0\}$ and let $k = dim(V)$. Take a basis $g_s$ of $V$ satisfying \begin{equation}\label{eq164}\int_{B(p, 1)}g_s\overline{g_t} = \delta_{st}.\end{equation}
\begin{prop}\label{prop4}
There exist constants $R>0$ and $c>0$ with $\sum\limits_{s}|g_s(x)|^2\geq cr(x, p)^2$ for $r(x, p)\geq R$.
\end{prop}
\begin{proof}
Assume the proposition is not true. There exist $r_i\to \infty$ and points $x_i$ with \begin{equation}\label{eq165}d(p, x_i) = r_i, \sum\limits_{s}|g_s(x_i)|^2\leq\frac{r_i^2}{i}.\end{equation}
We follow the notations from page $21$ to page $23$. For each $i$, There exists a basis $g^i_s$ of $V$ with \begin{equation}\label{eq166}\int_{B(p, 1)}g^i_s\overline{g^i_t} = \delta_{st}; \int_{B(p_i, 1)}g^i_s\overline{g^i_t}=\lambda^i_{st}\delta_{st}.\end{equation} Here $\lambda^i_{st}$ are constants. Then (\ref{eq164}) and (\ref{eq166}) imply \begin{equation}\label{eq167}\sum\limits_{s}|g_s|^2 = \sum\limits_{s}|g^i_s|^2. \end{equation}Note by three circle theorem and mean value inequality, \begin{equation}\label{eq168}\lambda^i_{ss}\geq cr_i^2\end{equation} for some $c= c(n, v)>0$. Then $h^i_s = \frac{g^i_s}{\sqrt{\lambda^i_{ss}}}$ satisfies \begin{equation}\label{eq169}\int_{B(p_i, 1)}h^i_s\overline{h^i_t} = \delta_{st}.\end{equation} Three circle theorem and mean value inequality imply \begin{equation}\label{eq170}0<c(n, v)\leq\max_{B(p_i, 1)}|h^i_s(x)|\leq C(n, v).\end{equation} After passing to subsequence, we may assume $M_i\to M_\infty$ and $h^i_s, f^i_j$ all converge. Say $h^i_s\to h^\infty_s$; $f^i_j\to f^\infty_j$ uniformly on each compact set. Clearly $h^\infty_s(s= 1,.., k)$ are linearly independent on $M_\infty$.
\begin{claim}\label{cl6}
$span\{f^\infty_j\}\subset span\{h^\infty_s\}$ on $M_\infty$.
\end{claim}
\begin{proof}
Assume the claim is not true. Set $V' =span\{f^\infty_j, h^\infty_s\}$. Then $dim(V')>k$. By three circle theorem, $f^\infty_j, h^\infty_s$ are of polynomial growth of order $2C$. Take a basis $u_1, ..., u_m$ of $V'$, $m\geq k+1$. Therefore, $u_l(1\leq l\leq m)$ are of polynomial growth of order $2C$.
For any $f\in V'$, $f$ satisfies three circle theorem. That is, if $M(f, r) = \max\limits_{B(p_\infty, r)}|f(x)|$, $\log M(f, r)$ is convex in terms of $\log r$. The reason is that $f$ is a limit of holomorphic functions of polynomial growth on $M_i$. Write $u_l = \sum\limits_{s=1}^ka^s_lh^\infty_s+\sum\limits_{j=1}^Kb^j_lf^\infty_j$. Here $a^s_l, b^j_l$ are constants. Define $u^i_l = \sum\limits_{s=1}^ka^s_lh^i_s+\sum\limits_{j=1}^Kb^j_lf^i_j$. Then $u^i_l\to u_l$ uniformly on each compact set. As $u_l$ is a basis for $V'$, for sufficiently large $i$, $u^i_l$ are linearly independent on $B(p_i, 1)$. We can also regard $u^i_l$ as holomorphic functions on $B(p, 3r_i)$ on $M$. Let $v^i_l$ be a basis of $span\{u^i_l\}$ with $\int_{B(p, 1)}v^i_l\overline{v^i_t} = \delta_{lt}$. Let us write $v^i_l = \sum\limits_{t=1}^mC^i_{lt}u^i_t$. Here $C^i_{lt}$ are constants. 
We are interested in \begin{equation}\label{eq171}F_{i, l} = \frac{\max\limits_{B(p_i, 2)}|v^i_l|}{\max\limits_{B(p_i, 1)}|v^i_l|}=\frac{\max\limits_{B(p_i, 2)}|\sum\limits_{t=1}^mC^i_{lt}u^i_t|}{\max\limits_{B(p_i, 1)}|\sum\limits_{t=1}^mC^i_{lt}u^i_t|}.\end{equation} In the quotient, we can normalize the coefficients $C^i_{lt}$ so that $\max\limits_{1\leq t\leq m}|C^i_{lt}| = 1$.
As $u_l$ are linearly independent on $M_\infty$, by a simple compactness argument and three circle theorem for $V'$ on $M_\infty$, we see that for $i$ sufficiently large, $1\leq l\leq m$, \begin{equation}\label{eq172}F_{i, l}\leq (2+\epsilon)^{2C}\end{equation} for any given $\epsilon>0$. As before, we can apply the three circle theorem to find a subsequence of $v^i_l$ converging to linearly independent holomorphic functions $v_l$ on $M$, satisfying $v_l(p) = 0$; $deg(v_l)\leq 2C$. As $l$ is from $1$ to $m$ and $m>k$, this contradicts that $dim(V) = k$.
\end{proof}
Given claim \ref{cl6}, we find $f^i_j$ is almost in the $span\{h^i_s\}$. More precisely, \begin{equation}\label{eq173}\lim\limits_{i\to\infty}\max_{B(p_i, 1)}|f^i_j(x) - \sum\limits_{s}c^i_{js}h^i_s| = 0\end{equation} for $c^i_{js} = \int_{B(p_i, 1)}f^i_j\overline{h^i_s}$. In particular, $|c^i_{js}|\leq C(n, v)$. By (\ref{eq162}), \begin{equation}\label{eq174}\min_{\partial B(p_i, 1)}\sum\limits_{j=1}^{K}|f^i_j(x)|^2 > \frac{3}{2}\max_{B(p_i, \frac{3\delta_2}{\delta_1})}\sum\limits_{j=1}^K|f^i_j(x)|^2\geq c(n, v)>0.\end{equation} Hence \begin{equation}\label{eq175}C(n, v)\min_{\partial B(p_i, 1)}\sum\limits_{s}|h^i_s|^2 \geq c(n, v)>0.\end{equation} Finally by (\ref{eq168}), \begin{equation}\label{eq176}|h^i_s|^2 = \frac{|g^i_s|^2}{\lambda^i_{ss}}\leq\frac{|g^i_s|^2}{cr_i^2}.\end{equation} Then from (\ref{eq167}), \begin{equation}\label{eq177}\min\limits_{\partial B(p, r_i)}\sum\limits_{s}|g_s|^2=\min\limits_{\partial B(p_i, 1)}\sum\limits_{s}|g_s|^2=\min\limits_{\partial B(p_i, 1)}\sum\limits_{s}|g^i_s|^2 \geq c(n, v)r_i^2>0.\end{equation} This contradicts (\ref{eq165}).

\end{proof}

To conclude the proof of theorem \ref{thm4}, we just need to add the constant function $1$ to $V$.

\end{proof}
\section{\bf{Completion of the proof of theorem \ref{thm1}}}
\begin{theorem}\label{thm12}
Let $M$ be a complete noncompact K\"ahler manifold with nonnegative bisectional curvature and maximal volume growth. Then $M$ is biholomorphic to an affine algebraic variety. Also the ring of holomorphic functions of polynomial growth is finitely generated.
\end{theorem}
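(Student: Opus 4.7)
My plan is to combine Theorem \ref{thm4} with the corollary that $\mathcal{O}_P(M)$ separates points and tangents to build a proper holomorphic embedding of $M$ into some $\mathbb{C}^{N'}$, show the image is algebraic by a Stoll-type volume growth argument, and then deduce finite generation via Noether normalization.

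First I would take the functions $f_1,\ldots,f_N \in \mathcal{O}_P(M)$ from Theorem \ref{thm4} and let $F_0 = (f_1,\ldots,f_N) \colon M \to \mathbb{C}^N$. Since $\sum |f_i|^2 \geq c\, d(x,p)^2$, the map $F_0$ is proper, so $V_0 := F_0(M)$ is a closed analytic subvariety of $\mathbb{C}^N$ by Remmert's proper mapping theorem. Because each $f_i$ has polynomial growth and $\sum|f_i|^2 \geq c\, d(x,p)^2$, the preimage $F_0^{-1}(B_{\mathbb{C}^N}(0,R))$ is contained in $B(p, R/\sqrt{c})$, which has volume $\leq CR^{2n}$ by Bishop--Gromov (using the maximal volume growth assumption). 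Consequently $V_0$ has polynomial volume growth of order $2n$ inside $\mathbb{C}^N$. By Stoll's theorem, a pure-dimensional closed analytic subvariety of $\mathbb{C}^N$ with polynomial volume growth of this order is algebraic, so $V_0$ is an affine algebraic variety.

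Next I would upgrade $F_0$ to a proper holomorphic embedding. Using that $\mathcal{O}_P(M)$ separates points and tangent vectors (established in the corollary of Theorem \ref{thm3}), I adjoin finitely many further polynomial growth functions $g_1,\ldots,g_k$ to form $F = (f_1,\ldots,f_N,g_1,\ldots,g_k)\colon M \to \mathbb{C}^{N'}$ that is injective and everywhere an immersion. The key point that makes finitely many functions suffice is that the degeneracy locus — pairs $(x,y)$ with $F_0(x)=F_0(y)$, and the critical locus of $F_0$ — lies over an algebraic subvariety of $V_0$, so a Noetherian induction on dimension combined with the separation property produces a finite choice. Since $F$ still contains $F_0$ as a subset of its components, $F$ is proper, hence a proper holomorphic embedding, and its image $V := F(M)$ is again an affine algebraic subvariety of $\mathbb{C}^{N'}$ by the same Stoll argument. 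Thus $M$ is biholomorphic to the affine algebraic variety $V$.

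For finite generation of $\mathcal{O}_P(M)$, identify $M$ with $V$ via $F$, and let $\mathbb{C}[V]$ be the coordinate ring, which is finitely generated by $f_1,\ldots,f_{N'}$. Clearly $\mathbb{C}[V] \subseteq \mathcal{O}_P(M)$. For the reverse inclusion, pick a Noether normalization $\pi \colon V \to \mathbb{C}^n$, a finite surjective morphism of some degree $d$. Given $g \in \mathcal{O}_P(M)$, the elementary symmetric functions $\sigma_j$ of $g$ on the fibres of $\pi$ are well-defined holomorphic functions on $\mathbb{C}^n$ away from the branch locus, extend across it by Hartogs, and have polynomial growth (since $g$ does and $\pi$ is proper finite with polynomially bounded fibre distances). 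Hence each $\sigma_j$ is a polynomial on $\mathbb{C}^n$, yielding a monic equation $g^d + \sum_{j=1}^d (-1)^j \pi^*\sigma_j\, g^{d-j} = 0$ showing $g$ is integral over $\pi^*\mathbb{C}[\mathbb{C}^n]$. Passing to the normalization if necessary and using that $g$ is holomorphic on $V$, one concludes $g \in \mathbb{C}[V]$, so $\mathcal{O}_P(M) = \mathbb{C}[V]$ is finitely generated.

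The main obstacle I expect is the finite-functions-suffice step for the embedding: separation of points and tangents is pointwise, but we need a uniformly finite family. Resolving this cleanly requires exploiting the already-established algebraicity of the image $V_0$, so that the non-injectivity and non-immersion loci are algebraic subvarieties amenable to a Noether-induction-type resolution, rather than treating the noncompact situation naively. The polynomial-growth control of all auxiliary functions in this process, via the three circle theorem \ref{thm8} and Hörmander's $L^2$-technique, is what keeps everything inside $\mathcal{O}_P(M)$.
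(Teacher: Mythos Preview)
Your overall strategy is sound and parallels the paper in spirit, but the Stoll step contains a genuine gap. You write that because $F_0^{-1}(B_{\mathbb{C}^N}(0,R)) \subset B(p, R/\sqrt{c})$ has $M$-volume at most $CR^{2n}$, ``consequently $V_0$ has polynomial volume growth of order $2n$ inside $\mathbb{C}^N$.'' These are different volumes. The induced $2n$-volume on $V_0$ is
\[
\mathrm{vol}\bigl(V_0\cap B(0,R)\bigr)=\frac{1}{\deg F_0}\int_{F_0^{-1}(B(0,R))}\frac{(F_0^*\omega_{\mathbb{C}^N})^n}{n!},
\]
and $(F_0^*\omega_{\mathbb{C}^N})^n=|JF_0|^2\,\omega_M^n$. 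Theorem~\ref{thm4} only places the $f_i$ in some $\mathcal{O}_d(M)$ with $d=C(n,v)>1$; the gradient estimate then gives $|df_i|\leq C(1+r)^{d-1}$, hence $|JF_0|^2\leq C(1+r)^{2n(d-1)}$ on $B(p,R/\sqrt{c})$. Combining with the $M$-volume bound yields only $\mathrm{vol}(V_0\cap B(0,R))\leq CR^{2nd}$, which is too weak: Stoll's theorem characterizes algebraicity by the bound $O(R^{2n})$ (recall that by Lelong monotonicity $\mathrm{vol}(V_0\cap B(0,R))/R^{2n}$ is nondecreasing, so a higher-order polynomial bound says nothing).

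The paper sidesteps Stoll entirely. It takes a basis $g_1,\ldots,g_{n_k}$ of $\mathcal{O}_k(M)$ for large $k$, lets $\alpha$ be the (prime) ideal of polynomial relations among them, and sets $\Sigma_k=Z(\alpha)\subset\mathbb{C}^{n_k}$. Since the transcendental dimension of $\mathcal{O}_P(M)$ over $\mathbb{C}$ is exactly $n$ (Theorem~\ref{thm9} and the corollary following Theorem~\ref{thm3}), $\dim\Sigma_k=n$. Properness (Theorem~\ref{thm4}) plus Remmert makes $F_k(M)$ a closed $n$-dimensional analytic subvariety of the irreducible $n$-dimensional $\Sigma_k$, whence $F_k(M)=\Sigma_k$. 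Algebraicity of the image thus comes for free, with no volume estimate needed. Your Noetherian-induction step to upgrade the map to an embedding is essentially what the paper does (increasing $k$ to separate the finite fibres over the singular set of $\Sigma_k$), and your finite-generation argument via Noether normalization and integrality is a legitimate alternative to the paper's rationality argument---though you should note that integrality of a holomorphic $g$ over $\mathbb{C}[V]$ forces $g\in\mathbb{C}[V]$ only after passing to the normalization of $V$.
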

\begin{proof}
Given any $k\in\mathbb{N}$, let $n_k=dim_{\mathbb{C}}(\mathcal{O}_k(M))$. Define a holomorphic map from $M$ to $\mathbb{C}^{n_k}$ by $F_k(x) = (g_1(x), .... , g_{n_k}(x))$. Here $g_1, ...., g_{n_k}$ is a basis for $\mathcal{O}_k(M)$. When $k$ is getting larger, we only add new functions to the basis (that is, we do not change the previous functions). Our goal is to prove that for sufficiently large $k$, $F_k$ is a biholomorphism to an affine algebraic variety. 

Below the value $k$ might change from line to line, basically we shall increase its value in finite steps.
First assume $k$ is large so that the functions $f_1, ... , f_N$ constructed in theorem \ref{thm4}  are in $\mathcal{O}_k(M)$ and they separate the tangent space at a point $p\in M$. Let $\alpha$ be the ideal of polynomial relations of functions $g_1, ..., g_{n_k}$. That is to say, \begin{equation}\label{eq178}\alpha = \{p(g_1, ..., g_{n_k})|p(g_1(x), ..., g_{n_k}(x)) = 0, \forall x\in M\}.\end{equation} Here $p$ is a polynomial. Then $\alpha$ is a prime ideal. Let $\Sigma_k$ be the affine algebraic variety defined by $\alpha$. Then $dim(\Sigma_k)= n$, as the transcendental dimension of $(g_1, ..., g_{n_k})$ over $\mathbb{C}$ is $n$. Moreover, $dim(F_k(M)) = n$, as the tangent space at $p$ is separated. 
By theorem \ref{thm4}, $F_k$ is a proper holomorphic map from $M$ to $\mathbb{C}^{n_k}$. Hence the image of $F_k$ is closed. By proper mapping theorem, the image of $F_k$ is an analytic subvariety of dimension $n$. As $\Sigma_k$ is irreducible, $F_k(M) = \Sigma_k$. 

Our argument below is very similar to some parts of \cite{[DS]}. Given any point in $\Sigma_k$, the preimage of $F_k$ is a compact subvariety of $M$, as $F_k$ is proper. As $M$ is a Stein manifold($M$ is exhausted by $\Omega_i$ which are Stein), The preimages contain only finitely many points. Given a generic point $y\in\Sigma_k$, we can find polynomial growth holomorphic functions separating $F_k^{-1}(y)$. Therefore, by increasing $k$, we may assume $F_k$ is generically one to one. Note that if $x\in\Sigma_k$ and the preimage of $x$ contain more than one point, then $x$ is in the singular set of $\Sigma_k$, say $S(\Sigma_k)$. Write $S(\Sigma_k)$ as a finite union of irreducible algebraic subvarieties $\Sigma'_s(1\leq s\leq t_k)$. Set $h = dim(S(\Sigma_k))$. Let us assume $dim(\Sigma'_s) = dim(S(\Sigma_k))$ for $1\leq s\leq r_k\leq t_k$. For a generic point $x\in\Sigma'_s$, the preimages under $F_k$ contain finitely many points. Therefore, we can increase the value of $k$ so that the preimages of $x$ and their tangent spaces are separated.  In this way, the dimension of $S(\Sigma_k)$ is decreased. After finitely many steps, $F_k$ becomes a biholomorphism from $M$ to $\Sigma_k$ which is affine algebraic.
\begin{claim}
We can identify polynomial growth holomorphic functions on $M$ with regular functions on $\Sigma_k$ via $F_k$. Thus $\mathcal{O}_P(M)$ is finitely generated.
\end{claim}
\begin{proof}
First, by theorem $3.2$ in \cite{[H]}, regular functions on $\Sigma_k$ are identified with the affine coordinate ring of $\Sigma_k$. Thus, any regular function is of polynomial growth. Since the transcendental dimension of $\mathcal{O}_P(M)$ is $n$ over $\mathbb{C}$, we may assume the affine coordinate functions generates the field of $\mathcal{O}_P(M)$. Then every polynomial growth holomorphic function is rational, hence a regular function on $M$.\end{proof}

The proof of theorem \ref{thm12} is complete.
\end{proof}
\begin{cor}
Let $M$ be a complete noncompact K\"ahler manifold with nonnegative bisectional curvature. Then the ring of holomorphic functions of polynomial growth is finitely generated.
\end{cor}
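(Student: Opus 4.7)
The plan is to reduce the general case to Theorem \ref{thm12} by passing to the universal cover and taking invariants under the deck transformation group. First, if $\mathcal{O}_P(M) = \mathbb{C}$ there is nothing to prove, and if $M$ has maximal volume growth Theorem \ref{thm12} applies directly. So assume $\mathcal{O}_P(M) \neq \mathbb{C}$ and $M$ does not have maximal volume growth. By the strengthened form of the main result of \cite{[L2]} recorded in the introduction (namely that $(1)\Rightarrow(2)$ of Conjecture \ref{conj4} holds whenever the universal cover of $M$ is not a K\"ahler product), this case forces the universal cover $\widetilde{M}$ to split as a K\"ahler product. Iterating the splitting together with the Cheeger-Gromoll factorization, I would write
\[
\widetilde{M} = \mathbb{C}^k \times N_1 \times \cdots \times N_\ell,
\]
with each $N_j$ of complex dimension strictly less than $n$ and with no further Euclidean de Rham factor.

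I would then argue by induction on $n$, the case $n=1$ being classical. Each $N_j$ has dimension $<n$, so the inductive hypothesis yields that $\mathcal{O}_P(N_j)$ is finitely generated, and $\mathcal{O}_P(\mathbb{C}^k) = \mathbb{C}[z_1,\ldots,z_k]$ is obviously so. The nontrivial step is to show that $\mathcal{O}_P$ of a K\"ahler product is generated as a ring by the pullbacks from the factors. For this I would use Theorem \ref{thm9}: the space $\mathcal{O}_d(M_1)$ is finite dimensional, so after choosing a basis $\phi_1,\ldots,\phi_m$ and evaluation points $y_1,\ldots,y_m \in M_1$ for which the matrix $(\phi_i(y_j))$ is invertible, any $f \in \mathcal{O}_d(M_1 \times M_2)$ admits an expansion $f(x_1,x_2) = \sum_i g_i(x_2)\phi_i(x_1)$ with $g_i(x_2) = \sum_j \alpha_{ij} f(y_j,x_2)$; the slices $f(y_j,\,\cdot\,)$ are manifestly of polynomial growth on $M_2$, hence so are the coefficients $g_i$. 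Iterating this across the product decomposition of $\widetilde{M}$ gives that $\mathcal{O}_P(\widetilde{M})$ is a finitely generated $\mathbb{C}$-algebra.

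Finally I would descend to $M$. The deck group $G = \pi_1(M)$ acts on $\widetilde{M}$ by holomorphic isometries preserving the de Rham decomposition (possibly after passing to a finite-index subgroup, which does not affect finite generation of the ambient or the invariant ring), and $\mathcal{O}_P(M) = \mathcal{O}_P(\widetilde{M})^G$ because the covering map is a local isometry. The action on $\mathbb{C}^k$ is by complex affine isometries, i.e. crystallographic, while on each $N_j$ it is by holomorphic isometries of a manifold to which the previous step already applies; in particular the induced $G$-action on the finitely generated algebra $\mathcal{O}_P(\widetilde{M})$ factors through an algebraic group of the kind treated by Nagata in Appendix B, and Nagata's theorem then produces finite generation of the invariant ring $\mathcal{O}_P(M)$. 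The main obstacle I expect is the separation-of-variables step in the second paragraph, where making the coefficients $g_i$ genuinely lie in $\mathcal{O}_P(M_2)$ with a degree controlled uniformly in the partition of the product requires a careful choice of evaluation points and a nondegeneracy argument leaning on Theorem \ref{thm9}; a secondary concern is verifying that the specific group action produced here falls inside the class for which the form of Nagata's theorem recalled in Appendix B actually guarantees finite generation of invariants.
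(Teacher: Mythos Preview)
Your separation-of-variables argument for showing $\mathcal{O}_P(\widetilde M)$ is finitely generated is fine and in fact avoids the induction you set up: each irreducible non-Euclidean factor $N_j$ is simply connected and does not split, so by \cite{[L2]} it either satisfies $\mathcal{O}_P(N_j)=\mathbb{C}$ or has maximal volume growth, and in the latter case Theorem~\ref{thm12} applies directly. The paper does not take this route, but this part of your plan would work.

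The genuine gap is the descent step, which you flag as a ``secondary concern'' but which is in fact the heart of the matter. Nagata's theorem, as recalled in Appendix~B, requires the group to be (semi-)reductive, and you give no argument for this. After passing to a finite-index subgroup preserving the de~Rham factors, the projection of $G=\pi_1(M)$ to the Euclidean factor $\mathbb{C}^k$ acts by complex affine isometries and may well contain translations; the induced action on $\mathbb{C}[z_1,\dots,z_k]$ is then unipotent and not reductive, so Nagata does not apply. The statement that the action ``factors through an algebraic group of the kind treated by Nagata'' is exactly what needs proof and does not follow from anything you have written.

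The paper's strategy is designed precisely to dodge this difficulty. Rather than using the full de~Rham decomposition, the paper uses the heat flow on $\log(|f|^2+1)$ for $f\in E=\mathcal{O}_P(M)$ to produce a \emph{canonical} splitting $\widetilde M=M_1\times M_2$, where $M_2$ is tangent to the common null distribution $D=\bigcap_{f\in E}D_f$; every $G$-invariant function is then constant along $M_2$, so $E$ is identified with functions on $M_1$ invariant under the closure $G'$ of the projected action. The crucial point is Claim~\ref{cl9}: $G'$ is \emph{compact}. This is proved by contradiction using H\"ormander's $L^2$ estimate and Theorem~\ref{thm9}---if the $G'$-orbit of a point escaped to infinity one could manufacture infinitely many independent polynomial-growth functions of bounded degree. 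Compactness of $G'$ gives reductivity, and Claim~\ref{cl10} shows $M_1$ has maximal volume growth, so Theorem~\ref{thm12} and Nagata finish the job. Your decomposition throws away this compactness mechanism by splitting too finely; the translation part of the action on the Euclidean factor is exactly what the paper's coarser splitting absorbs into $M_2$.
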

\begin{proof} 
We first consider the case when the universal cover does not split. 
By theorem $2$ in \cite{[L2]}, if there exists a nonconstant holomorphic function of polynomial growth on $M$, then $M$ is of maximal volume growth. Then then the result follows from the theorem above.

In the general case, let $\tilde{M}$ be the universal cover. Let $G$ be the fundamental group of $M$. 
Let $E$ be the set of $G$-invariant holomorphic functions of polynomial growth on $\tilde{M}$. We can identify $E$ with $\mathcal{O}_P(M)$. Given any $f\in E$, consider \begin{equation}\label{eq179}u^f_t(x) = \int_{\tilde{M}}H_{\tilde{M}}(x, y, t)\log(|f(y)|^2+1)dy,\end{equation} where $H_{\tilde{M}}(x, y, t)$ is the heat kernel of $\tilde{M}$. By theorem $3.1$ in \cite{[NT1]}, $\sqrt{-1}\partial\overline\partial u^f_t\geq 0$ for $t>0$. Let $D^t_f$ be the null space of $\sqrt{-1}\partial\overline\partial u^f_t$.  Theorem $3.1$ in \cite{[NT1]} says $D^t_f$ is a parallel distribution.  

\begin{claim}$D^t_f$ is invariant for $t>0$. Then we define $D_f = D^t_f$, $t>0$.
\end{claim}\label{cl8}
\begin{proof} By theorem $2.1$, part ($ii$) in \cite{[NT1]}(see also the second sentence in the proof of corollary $2.1$ in \cite{[NT1]}), if $t_1>t_2>0$, 
\begin{equation}\label{eq180}dim(D^{t_1}_f)\leq dim(D^{t_2}_f).\end{equation} De Rham theorem says we can write $\tilde{M} = N_1\times N_2$ where $D^{t_2}_f$ is the tangent space of $N_2$. $u^f_{t_2}$ is of logarithmic growth by proposition \ref{prop5}. Moreover, $u^f_{t_2}$ is pluriharmonic on each slice $N_2$, hence harmonic on $N_2$. As $N_2$ has nonnegative bisectional curvature, the Ricci curvature of $N_2$ is nonnegative. By a theorem of Cheng-Yau \cite{[CY]}, $u^f_{t_2}$ is constant on each slice $N_2$. That is to say, $u^f_{t_2}$ is a function on $N_1$. By uniqueness of the heat flow, $u^f_{t_1}$ is also constant on each slice of $N_2$. Combining this with (\ref{eq180}), we obtain that $D^{t_1}_f=D^{t_2}_f$. 
\end{proof}

Hence, $u^f_t$ is constant on $N_2$ for $t\geq 0$. This implies $f$ is constant on the factor $N_2$.
Now define the parallel distribution \begin{equation}\label{eq181}D = \cap_{f\in E}D_f.\end{equation} By De Rham theorem, we can assume $\tilde{M} = M_1\times M_2$ where $D$ is the tangent space of $M_2$. Then, for any $f\in E$, $f$ is constant on the factor $M_2$. Note $D$ is invariant under $G$-action. 
Fix an inclusion $i$ of a slice: $M_1\hookrightarrow M_1\times M_2$. Now for any $g\in G$, $g(i(M_1))$ must be another slice of $M_1$. 
Let $\pi$ be the projection from $M_1\times M_2$ to $M_1$.  For $x\in M_1$ and $g\in G$, Define a holomorphic isometry $u_g$ of $M_1$ by $u_g(x)=\pi(g(i(x)))$. Of course, $u_g$ is a subgroup of the holomorphic isometry group of $M_1$. Let $G'$ be the closure of $u_g$. Then we can identify $E$ with polynomial growth holomorphic functions on $M_1$ invariant under $G'$.
\begin{claim}\label{cl9}
$G'$ is a compact group.
\end{claim}
\begin{proof}
It suffices to prove that for $x\in M_1$, $u_g(x)$ is bounded for $g\in G$. Assume this is not true, then there exists a sequence $g_i\in G'$ with $x_i=g_i(x)\to\infty$ on $M_1$.
Let $(U, z_1, .., z_m)$ be a holomorphic chart on $M_1$ around $x$ with $z(x) = 0$. Let $(U_i = g_i(U), z^i_s = z_s\circ g^{-1}_i)$ be the holomorphic chart in $U_i$. By taking subsequence of $x_i$, we may assume $U_i$ are disjoint.
We will use some construction in \cite{[N1]}. First, pick finitely many $f_j\in E$ so that $\sqrt{-1}\partial\overline\partial\sum\limits_ju^{f_j}_1>0$ on $M_1$. Let $u = \sum\limits_ju^{f_j}_1$.
Then $u$ is a strictly plurisubharmonic function on $M_1$ with logarithmic growth. Moreover, $u$ is invariant under $G'$ action. Let $U^2\subset\subset U^1\subset\subset U$ be open sets containing $x$. Consider a smooth cut-off function $\varphi$ with $\varphi = 1$ in $U^2$; $\varphi = 0$ in $M_1\backslash U^1$. Define $\varphi_i = \varphi\circ g_i^{-1}$. Then $\varphi_i$ is supported in $U_i$. Let \begin{equation}\label{eq182}\psi(x) = 4m\sum\limits_{i}\varphi_i(x)\log(\sum\limits_{s=1}^m|z^i_s(x)|^2) + Cu(x).\end{equation} Here $C$ is a positive constant so that $\sqrt{-1}\partial\overline\partial\psi\geq\omega$ on $U_i$; $\omega$ is the K\"ahler form on $M_1$. Then $\sqrt{-1}\partial\overline\partial\psi > 0$ on $M_1$.
Now we solve the $\overline\partial$-problem $\overline\partial h_i = \overline\partial\varphi_i$ with \begin{equation}\label{eq183}\int_{M_1}|h_i|^2e^{-\psi}\leq\int_{M_1}|\overline\partial\varphi_i|^2e^{-\psi}.\end{equation} One sees that $\lambda_i = h_i-\varphi_i$ are holomorphic functions of polynomial growth. The growth orders are uniformly bounded. Moreover, $h_i(x_k) = 0$ for all $k\in\mathbb{N}$. Thus $\lambda_i$ are linearly independent, as $(h_i-\varphi_i)(x_j) = -\delta_{ij}$. This contradicts theorem \ref{thm9}.
\end{proof}
\begin{claim}\label{cl10}
$M_1$ is of maximal volume growth. In particular, the ring of polynomial growth holomorphic functions is finitely generated.
\end{claim}
\begin{proof}
As $M_1$ is simply connected, write $M_1$ as a product of K\"ahler manifolds which are not products anymore. For each factor, there exists polynomial growth holomorphic function on $M_1$ which is not constant on that factor. Then each factor must be of maximal volume growth by theorem $2$ in \cite{[L2]}.
\end{proof}

By claim \ref{cl10} and theorem \ref{thm12}, $\mathcal{O}_P(M_1)$ is finitely generated. $\mathcal{O}_P(M)$ is just the subring of $\mathcal{O}_P(M_1)$ invariant under $G'$. 
Since $G'$ is compact, the finite generation of $\mathcal{O}_P(M)$ follows from a theorem of Nagata \cite{[Na]} (the detailed argument is in the appendix $B$).

\end{proof}
\appendix
\section{Proof of theorem \ref{thm6}}
\begin{proof}
This part basically follows from \cite{[NT1]}. For any $a>0$, $\eta(x, t)$ satisfies (\ref{eq184}), (\ref{eq185}) below.
\begin{equation}\label{eq184}
(\frac{\partial}{\partial t}-\Delta)\eta_{\gamma\overline\delta} = R_{\beta\overline\alpha\gamma\overline\delta}\eta_{\alpha\overline\beta}-\frac{1}{2}(R_{\gamma\overline{p}}\eta_{p\overline\delta}+R_{p\overline\delta}\eta_{\gamma\overline{p}})
\end{equation}
\begin{equation}\label{eq185}
\int_M||\eta(x, 0)||\exp(-ar^2(x))dx<\infty
\end{equation}
We will assume the equation below at this moment. The proof is given at the end of this section.
\begin{equation}\label{eq186}
\lim\limits_{r\to\infty}\inf\int_0^T\int_{B(p, r)}||\eta||^2(x, t)\exp(-ar^2(x))dxdt<\infty
\end{equation}

Recall corollary $1.1$ in \cite{[NT1]} with simplifications the assumptions:
\begin{prop}\label{prop5}
Let $(M^n, p)$ be a complete noncompact K\"ahler manifold with nonnegative bisectional curvature. $r(x) = d(x, p)$. Let $u$ be a nonnegative function on $M$ satisfying \begin{equation}\label{eq187}\dashint_{B(p, r)}u(y)dy \leq \exp(a+br)\end{equation} for some constants $a, b>0$. Let \begin{equation}\label{eq188}v(x, t) = \int_MH(x, y, t)u(y)dy.\end{equation} $H$ is the heat kernel on $M$. Then given any $\epsilon>0, T>0$, there exists $C(n, \epsilon, a, b)>0$  such that for any $x$ satisfying $r=r(x)\geq\sqrt{T}$, \begin{equation}\label{eq189}-C(n, \epsilon, a, b)+C_1(n, \epsilon)\inf_{B(x, \epsilon r)}u\leq v(x, t)\leq C(n, \epsilon, a, b)+\sup_{B(x, \epsilon r)}u\end{equation} for $0\leq t\leq T$. Here $C_1(n, \epsilon)>0$.
\end{prop}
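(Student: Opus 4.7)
The plan is to split the heat integral at the ball $B(x, \epsilon r)$ and estimate the near and far contributions separately, using the Li-Yau Gaussian estimates for the heat kernel together with Bishop-Gromov volume comparison and the assumed exponential growth bound on the averages of $u$.

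For the upper bound, I would first handle the near part by $\int_{B(x, \epsilon r)} H(x,y,t)u(y)\,dy \leq \sup_{B(x, \epsilon r)} u$, which uses only $\int_M H(x,y,t)\,dy \leq 1$. For the far part, I would decompose $M \setminus B(x, \epsilon r)$ into dyadic annuli $A_k = B(x, 2^k\epsilon r) \setminus B(x, 2^{k-1}\epsilon r)$ and apply the Li-Yau upper bound $H(x,y,t) \leq C(n) V(x,\sqrt{t})^{-1}\exp\!\bigl(-d(x,y)^2/(5t)\bigr)$. Using $A_k \subset B(p, (1+2^k\epsilon)r)$ together with the hypothesis $\int_{B(p,R)} u \leq V(p,R)\,e^{a+bR}$ and Bishop-Gromov comparison to pass between $V(p,\cdot)$, $V(x,\cdot)$, and $V(x,\sqrt{t})$, the contribution of $A_k$ to the far integral is controlled by a constant times $(2^k\epsilon r/\sqrt{t})^{2n}\exp\!\bigl(-4^{k-1}\epsilon^2 r^2/(5T) + a + b(1+2^k\epsilon)r\bigr)$. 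Since $r \geq \sqrt{T}$, the quadratic-in-$r$ Gaussian exponent dominates the linear-in-$r$ growth coming from $u$ once $k$ is fixed, and the factor $-4^{k-1}$ guarantees summability in $k$; collecting everything produces a constant $C(n, \epsilon, a, b)$ uniform in $r \geq \sqrt{T}$ and $t \in [0, T]$.

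For the lower bound, since $u \geq 0$ the far part is nonnegative, so it is enough to bound the near part from below: $v(x,t) \geq \inf_{B(x, \epsilon r)} u \cdot \int_{B(x, \epsilon r)} H(x,y,t)\,dy$. The remaining task is to show $\int_{B(x, \epsilon r)} H(x,y,t)\,dy \geq C_1(n, \epsilon) > 0$ uniformly for $t \in [0, T]$. This again follows from the Gaussian upper bound applied to the tail: since $\epsilon r \geq \epsilon \sqrt{t}$, the identical dyadic argument gives $\int_{M \setminus B(x, \epsilon r)} H(x,y,t)\,dy \leq 1 - C_1(n, \epsilon)$, and combining with $\int_M H = 1$ yields the desired positive lower bound. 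The constant $-C(n, \epsilon, a, b)$ in the statement then plays the role of a slack term that allows the estimate to be written symmetrically with the upper bound (and is in particular vacuous when $\inf u$ is small).

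The main technical obstacle is the bookkeeping in the dyadic sum: one must simultaneously track Gaussian decay, the polynomial volume ratios coming from Bishop-Gromov, and the exponential growth of the averages of $u$, while keeping every constant depending only on $n, \epsilon, a, b$. The condition $r \geq \sqrt{T}$ is essential and not cosmetic: it is precisely what allows $\exp(-\epsilon^2 r^2/(5T))$ to defeat every factor $e^{br}$ appearing in the annular estimates, and without it the far-field contribution cannot be absorbed into a constant.
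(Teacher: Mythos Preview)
The paper does not give its own proof of this proposition; it is quoted verbatim as Corollary~1.1 of Ni--Tam \cite{[NT1]} with no argument reproduced. Your outline is the standard one and is essentially what Ni--Tam do: split at $B(x,\epsilon r)$, use the Li--Yau Gaussian bounds, Bishop--Gromov volume comparison, and a dyadic decomposition of the complement, with the hypothesis on the averages of $u$ controlling the far contribution. The upper-bound sketch is correct as written.

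One repair is needed in your lower bound. You propose to control the tail $\int_{M\setminus B(x,\epsilon r)} H(x,y,t)\,dy$ via the Li--Yau \emph{upper} estimate and conclude it is at most $1-C_1(n,\epsilon)$. But the constant in the Li--Yau upper bound exceeds $1$, so when $\epsilon r/\sqrt{t}$ is near its minimum value $\epsilon$ (which occurs for $t$ close to $T$ and $r$ close to $\sqrt{T}$) the resulting tail bound may well exceed $1$ and yield nothing. The clean fix is to use the Li--Yau \emph{lower} bound directly on the ball: since $\epsilon r\geq\epsilon\sqrt{t}$,
\[
\int_{B(x,\epsilon r)}H(x,y,t)\,dy\;\geq\;\int_{B(x,\epsilon\sqrt{t})}H(x,y,t)\,dy\;\geq\;\frac{c(n)}{V(x,\sqrt{t})}\,e^{-\epsilon^2/3}\,V\bigl(x,\epsilon\sqrt{t}\bigr)\;\geq\;c(n)\,\epsilon^{2n}e^{-\epsilon^2/3},
\]
the last step by Bishop--Gromov. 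This gives $C_1(n,\epsilon)>0$ directly, and the rest of your argument goes through unchanged.
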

Fix a point $p\in M$. Let $r(x) = d(x, p)$. Let $\phi(x) = \exp(r(x))$. Define \begin{equation}\label{eq190}\phi(x, t) = e^t\int_MH(x, y, t)\phi(y)dy.\end{equation} Then \begin{equation}\label{eq191}(\frac{\partial}{\partial t}-\Delta)\phi = \phi\end{equation} and \begin{equation}\label{eq192}\phi(x, t)\geq ce^{c_1r}\end{equation} for $0\leq t\leq T$, by proposition \ref{prop5}. Here $c, c_1$ are positive constants. Let \begin{equation}\label{eq193}h(x, t) = \int_MH(x, y, t)||\eta||(y, 0)dy.\end{equation}
The proposition below is just lemma $2.2$ in \cite{[NT1]}.
\begin{prop}\label{prop6}
There exists a positive function $\tau(R)$ so that for $0\leq t\leq T$, $h(x, t)\leq \tau(R)$ for $x\in B(p, 2R)\backslash B(p, \frac{R}{2})$. Moreover, $\lim\limits_{R\to\infty}\tau(R) = 0$.
\end{prop}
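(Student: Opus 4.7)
The plan is to use the compact support of the initial data, combined with the Gaussian decay of the heat kernel under nonnegative Ricci curvature. Since $u$ is a smooth function with compact support, its complex Hessian $\eta(y,0)_{\alpha\overline\beta} = u_{\alpha\overline\beta}(y)$ also has compact support. Fix $R_0 > 0$ with $\mathrm{supp}(u) \subset B(p, R_0)$, so that $\|\eta(\cdot,0)\|$ is supported in $B(p, R_0)$ and bounded by some constant $C_u = \|\nabla^2 u\|_\infty$.

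The first step is a simple geometric observation: for $R \geq 4R_0$ and $x \in B(p, 2R)\setminus B(p, R/2)$, every $y \in B(p, R_0)$ satisfies
\begin{equation*}
d(x,y) \geq d(x,p) - d(p,y) \geq \tfrac{R}{2} - R_0 \geq \tfrac{R}{4}.
\end{equation*}
Thus the support of $\|\eta(\cdot,0)\|$ lies entirely in $M \setminus B(x, R/4)$.

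Next, I would invoke the Li-Yau Gaussian upper bound for the heat kernel on $M$ (which has $\mathrm{Ric} \geq 0$) together with stochastic completeness, $\int_M H(x,y,t)\,dy = 1$. A standard consequence of the Gaussian bound, obtained by integrating the pointwise estimate $H(x,y,t) \leq C(n)\,\mathrm{vol}(B(x,\sqrt{t}))^{-1}\exp(-d(x,y)^2/(5t))$ in shells and applying Bishop--Gromov volume comparison, is the tail integral estimate
\begin{equation*}
\int_{M \setminus B(x, r)} H(x,y,t)\, dy \leq C(n)\exp\!\left(-\tfrac{c(n)\,r^2}{t}\right)
\end{equation*}
valid for all $t>0$. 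Combining this with the preceding step gives
\begin{equation*}
h(x,t) \leq C_u \int_{M \setminus B(x, R/4)} H(x,y,t)\, dy \leq C_u \, C(n) \exp\!\left(-\tfrac{c(n)R^2}{16T}\right)
\end{equation*}
uniformly for $t \in [0,T]$ and $x$ in the prescribed annulus.

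Setting $\tau(R) := C_u C(n)\exp(-c(n)R^2/(16T))$ for $R \geq 4R_0$ (and taking $\tau$ to equal a suitable constant upper bound for $R < 4R_0$) yields the desired conclusion, with $\tau(R) \to 0$ as $R \to \infty$. There is no serious obstacle here: the entire argument reduces to the compact support of the Hessian of $u$ and the classical Gaussian tail estimate for heat kernels on manifolds with nonnegative Ricci curvature; the only point requiring slight care is verifying that the Gaussian tail bound is uniform down to $t = 0$, which follows because as $t \downarrow 0$ the factor $\exp(-cR^2/t)$ dominates any algebraic blow-up in the volume normalization after integrating over shells.
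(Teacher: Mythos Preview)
Your argument is correct. The key observation---that $u$ has compact support in the setting of theorem \ref{thm6}, hence so does $\|\eta(\cdot,0)\|$---reduces the proposition to the standard Gaussian tail bound for the heat kernel on manifolds with nonnegative Ricci curvature, and your treatment of the case $t\downarrow 0$ is fine since for $x$ in the annulus and $R>2R_0$ one has $h(x,0)=\|\eta(x,0)\|=0$ directly.

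By way of comparison, the paper gives no proof at all: it simply records that the proposition is lemma~2.2 of \cite{[NT1]}. That lemma in Ni--Tam is stated under the weaker hypothesis that $\|\eta(\cdot,0)\|$ merely decays at infinity (it need not be compactly supported), and their argument accordingly uses proposition~\ref{prop5} (corollary~1.1 of \cite{[NT1]}) to control the heat evolution of functions with controlled averaged growth. Your approach is more elementary and entirely self-contained for the specific application here, at the cost of not covering the more general decay hypotheses that Ni--Tam need elsewhere.
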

The next proposition is lemma 2.1 in \cite{[NT1]}. Note (\ref{eq184}), (\ref{eq185}) and (\ref{eq186}) are used.
\begin{prop}\label{prop7}
$||\eta||(x, t)$ is a subsolution of the heat equation.  Moreover, $||\eta||(x, t)\leq h(x, t)$.
\end{prop}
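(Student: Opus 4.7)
The plan is to handle the two assertions in sequence. For the subsolution property, I would compute
\[
(\partial_t - \Delta)||\eta||^2 \;=\; 2\,\mathrm{Re}\,\langle(\partial_t - \Delta)\eta,\eta\rangle \;-\; 2|\nabla\eta|^2
\]
and substitute (\ref{eq184}) for $(\partial_t-\Delta)\eta$. At a fixed point I would choose a unitary frame diagonalizing the Hermitian tensor $\eta$, so $\eta_{\alpha\overline\beta}=\lambda_\alpha\delta_{\alpha\beta}$ with real $\lambda_\alpha$. The contractions collapse cleanly, and after symmetrizing in the diagonal indices the curvature reaction simplifies to
\[
-\tfrac{1}{2}\sum_{\alpha,\gamma}R_{\alpha\overline\alpha\gamma\overline\gamma}(\lambda_\alpha-\lambda_\gamma)^2,
\]
which is nonpositive by nonnegative bisectional curvature. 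Hence $(\partial_t - \Delta)||\eta||^2 \leq -2|\nabla\eta|^2$. Kato's inequality in the form $2||\eta||\,(\partial_t-\Delta)||\eta|| \leq (\partial_t-\Delta)||\eta||^2 + 2|\nabla\eta|^2$ then yields $(\partial_t-\Delta)||\eta||\leq 0$ weakly, with the zero set of $||\eta||$ treated in the standard regularized way.

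For the comparison $||\eta||\leq h$, put $w = ||\eta||-h$; both sides equal $||\eta||(\cdot,0)$ at $t=0$, $h$ solves the heat equation exactly, and $w$ is a subsolution by the first part. I would use the barrier $\phi$ from (\ref{eq190}), which satisfies $(\partial_t-\Delta)\phi = \phi > 0$ and $\phi \geq c\,e^{c_1 r}$. Then $w_\epsilon := w - \epsilon\phi$ obeys $(\partial_t-\Delta)w_\epsilon < 0$ and $w_\epsilon(\cdot,0)\leq 0$, so the parabolic maximum principle on an exhausting sequence of compact domains forces $w_\epsilon \leq 0$ as soon as $w_\epsilon$ cannot attain its supremum at spatial infinity. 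For that last piece I would invoke Proposition \ref{prop6} (the initial data of $h$ is compactly supported, so $h \to 0$ uniformly in far annuli) together with a pointwise sub-exponential bound on $||\eta||$, obtained by applying a mean-value iteration to the subsolution $||\eta||^2$ fed with the weighted $L^2$ input (\ref{eq186}). Sending $\epsilon\to 0$ then gives $||\eta||\leq h$.

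The main obstacle I expect is precisely this last step: upgrading the weighted time-integrated $L^2$-control (\ref{eq186}) to a pointwise bound on $||\eta||$ that grows strictly more slowly than $\phi$ at infinity, since under only a Ricci lower bound a Moser iteration on a noncompact manifold is delicate. If that turns out to be awkward, a cleaner alternative is to abandon the barrier altogether and instead test the subsolution inequality for $(w)_+$ against a Gaussian weight $e^{-ar^2}$, running a Gaffney--Karp style energy argument. That variant uses (\ref{eq186}) as the sole growth input, requires no pointwise bound on $||\eta||$, and reduces the claim to showing that a weighted integral of $(w)_+^2$ remains zero on $[0,T]$.
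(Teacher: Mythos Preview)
Your proposal is correct and tracks the argument of Ni--Tam \cite{[NT1]}, Lemma~2.1, to which the paper simply defers. The subsolution computation via diagonalization and symmetrization is the standard one: the reaction term collapses to $-\tfrac{1}{2}\sum_{\alpha,\gamma} R_{\alpha\overline\alpha\gamma\overline\gamma}(\lambda_\alpha-\lambda_\gamma)^2\le 0$ under nonnegative bisectional curvature, and Kato finishes.

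One remark on the comparison step. Your primary route (a) does not close with (\ref{eq186}) alone: that condition only controls $\int_0^T\!\int_{B(p,r)}||\eta||^2 e^{-ar^2(x)}\,dx\,dt$, so a parabolic mean-value or Moser step is fed a local $L^2$ mass that may grow like $e^{ar^2}$, and the resulting pointwise bound on $||\eta||$ is Gaussian in $r$ and outpaces the linear-exponential barrier $\phi\sim e^{c_1 r}$ of (\ref{eq192}). (In the specific situation of this appendix the much stronger bound (\ref{eq203}) is available, and with that input route (a) would go through; but the proposition as stated invokes only (\ref{eq186}).) Your alternative (b) --- testing the subsolution $(||\eta||-h)_+$ against a Gaussian weight and running a Gaffney--Karp/Li--Karp energy argument --- is precisely how Ni--Tam proceed, and is the intended mechanism by which (\ref{eq186}) enters.
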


Given $\epsilon>0$, define \begin{equation}\label{eq194}(\tilde\eta)_{\alpha\overline\beta} = \eta_{\alpha\overline\beta}+(\epsilon\phi-\lambda(x, t)) g_{\alpha\overline\beta}.\end{equation} At $t=0$, $\tilde\eta> 0$. Also, for $0\leq t\leq T$, if $R$ is sufficiently large, by proposition \ref{prop6}, we have $\tilde\eta>0$ on $\partial B(p, R)$. Suppose at some $t_0\in [0, T]$, $\tilde\eta(x_0, t_0)<0$ for $x_0\in \overline{B(p, R)}$. Then there exists $0\leq t_1<T$ with $\tilde\eta(x, t)\geq 0$ for $x\in B(p, R)$ and $0\leq t\leq t_1$. Moreover, the minimum eigenvalue of $\tilde\eta(x_1, t_1)$ is zero for some $x_1\in B(p, R)$(note $x_1$ cannot be on the boundary).  Now we apply the maximal principle. Let us assume \begin{equation}\label{eq195}\tilde\eta(x_1, t_1)_{\gamma\overline{\gamma}}=0\end{equation} for $\gamma\in T^{1,0}_{x_1}M, |\gamma| =1$. We may diagonize $\tilde\eta$ at $(x_1, t_1)$. Of course, we can assume $\gamma$ is one of the basis of the holomorphic tangent space. Then at $(x_1, t_1)$,
 \begin{equation}\label{eq196}(\frac{\partial}{\partial t}-\Delta)\tilde\eta_{\gamma\overline{\gamma}}\leq 0.\end{equation} On the other hand, by (\ref{eq184}), \begin{equation}\label{eq197}\begin{aligned}(\frac{\partial}{\partial t}-\Delta)\eta_{\gamma\overline{\gamma}} &= \sum\limits_{\alpha}R_{\gamma\overline{\gamma}\alpha\overline\alpha}\eta_{\alpha\overline\alpha}-\sum\limits_{\alpha}R_{\gamma\overline{\gamma}\alpha\overline\alpha}\eta_{\gamma\overline{\gamma}}\\&=\sum\limits_{\alpha}R_{\gamma\overline{\gamma}\alpha\overline\alpha}(\tilde\eta_{\alpha\overline\alpha}-\tilde\eta_{\gamma\overline\gamma})\\&\geq 0.\end{aligned}\end{equation} Note \begin{equation}\label{eq198}(\frac{\partial}{\partial t}-\Delta)((\epsilon\phi-\lambda(x, t)) g_{\gamma\overline{\gamma}}) = \epsilon\phi g_{\gamma\overline{\gamma}}> 0.\end{equation} Hence at $(x_1, t_1)$, \begin{equation}\label{eq199}(\frac{\partial}{\partial t}-\Delta)\tilde\eta_{\gamma\overline{\gamma}}>0.\end{equation} This is a contradiction. Now let $R\to\infty$ and then $\epsilon\to 0$, we proved that $\eta-\lambda(x, t)g_{\alpha\overline\beta}\geq 0$ for $0\leq t\leq T$.

Next we verify (\ref{eq186}). Basically we follow page $487-488$ on \cite{[NT1]}. Note our condition is more special.
First, we have that $|v(x, t)|\leq C$ for all $x, t$, as $u$ has compact support. Note \begin{equation}\label{eq200}(\Delta-\frac{\partial}{\partial t})v^2 = 2|\nabla v|^2.\end{equation}  Multiplying (\ref{eq200}) by suitable cutoff functions, using integration by parts, we find \begin{equation}\label{eq201}\int_0^T\dashint_{B(p, r)}|\nabla v|^2\leq C_1(r^{-2}\int_0^{2T}\dashint_{B(p, 2r)}v^2+\dashint_{B(p, r)}u^2)\leq C_2(T+1)\end{equation} for $r\geq 1$.
Bochner formula gives \begin{equation}\label{eq202}(\Delta-\frac{\partial}{\partial t})|\nabla v|^2\geq 2|\nabla^2 v|^2.\end{equation} Multiplying (\ref{eq202}) by suitable cutoff functions, using integration by parts, we find \begin{equation}\label{eq203}\int_0^T\dashint_{B(p, r)}|\nabla^2v|^2\leq C_3(r^{-2}\int_0^{2T}\dashint_{B(p, 2r)}|\nabla v|^2+\dashint_{B(p, r)}|\nabla u|^2)\leq C_2(T+1)\end{equation} for $r\geq 1$.
From this, (\ref{eq186}) follows easily.

\end{proof}

\section{Some algebraic results of Nagata}
We continue the proof of theorem \ref{thm1}.
The ring $R=\mathcal{O}_P(M_1)$ is finitely generated. We may assume the generators are in $F=\mathcal{O}_d(M_1)$ for some $d>0$. Let $g_1, ..., g_m$ be a basis for $F$.  Obviously $F$ is an invariant space of $G'$. Then we may think $\mathcal{O}_P(M_1)$ is $\mathbb{C}[g_1, ..., g_m]\slash\alpha$. Here $\alpha$ is an ideal. Then the $G'$ action on $R$ is induced by the representation $G'\to GL(m, \mathbb{C})$. Let $I_{G'}(R)$ be the subring of $R$ fixed by $G'$. 
In \cite{[Na]}, page $370$, the following definition was made:
\begin{definition}
A group $G$ is reductive if every rational representation is completely reducible.
\end{definition}
It was pointed out on page $370$ of \cite{[Na]} that all rational representations of $G$ in \cite{[Na]} are given by some specific finite dimensional representations of $G$.
In our case, as $G'$ is compact, every finite dimensional representation(complex) is completely reducible. Therefore, according to the definition above, $G'$ is reductive.
In \cite{[Na]}, the following was proved:
\begin{theorem}[Nagata]
$I_G(R)$ is finitely generated if $G$ is semi-reductive. 
\end{theorem}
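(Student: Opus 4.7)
The plan is to use the classical Hilbert--Nagata argument based on the Reynolds operator. First I would construct a Reynolds operator $E\colon R\to I_G(R)$: since semi-reductivity means every rational representation of $G$ is completely reducible, every finite-dimensional rational $G$-submodule $V\subset R$ decomposes as $V=V^G\oplus V'$, where $V'$ is the sum of all nontrivial irreducible subrepresentations, and the projection $V\to V^G$ is $G$-equivariant. Expressing $R$ as a directed union of finite-dimensional rational $G$-submodules, these projections assemble into a well-defined $G$-equivariant linear map $E\colon R\to I_G(R)$. The crucial algebraic property is $I_G(R)$-linearity: $E(rf)=r\,E(f)$ for $r\in I_G(R)$ and $f\in R$, which follows because multiplication by an invariant $r$ is a $G$-equivariant endomorphism of $R$ and therefore preserves the isotypic decomposition.

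Second, I would reduce to the polynomial case. Since $R$ is a finitely generated $G$-algebra, pick a finite-dimensional $G$-stable generating subspace $W\subset R$ and present $R=A/I$, where $A=\mathrm{Sym}(W)$ is a graded polynomial ring on which $G$ acts rationally and $I$ is a homogeneous $G$-stable ideal. The Reynolds operator for $A$ induces a surjection $A^G\to R^G$: given $\bar f\in R^G$, lift to $f\in A$ and observe that $E(f)\in A^G$ has image $E(\bar f)=\bar f$ in $R$. Hence it suffices to prove $A^G$ is finitely generated.

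Third, consider the ideal $J\subset A$ generated by all homogeneous $G$-invariants of positive degree. By the Hilbert basis theorem, $J$ is finitely generated, so there exist homogeneous invariants $f_1,\dots,f_r$ generating $J$. The claim is $A^G=\mathbb{C}[f_1,\dots,f_r]$, which I would prove by induction on degree. For a homogeneous invariant $f$ of degree $d>0$, write $f=\sum g_i f_i$ with each $g_i\in A$ homogeneous of degree $d-\deg f_i<d$; applying the $A^G$-linear Reynolds operator yields $f=E(f)=\sum E(g_i)f_i$, and by the inductive hypothesis each $E(g_i)\in\mathbb{C}[f_1,\dots,f_r]$, so $f\in\mathbb{C}[f_1,\dots,f_r]$.

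The main obstacle is verifying the $A^G$-linearity of the Reynolds operator, which is precisely the place where semi-reductivity enters essentially. In the concrete situation of this paper, $G=G'$ is compact, and Haar integration produces $E$ directly, with $R^G$-linearity automatic from the invariance of Haar measure; in Nagata's more general algebraic setting one must work with the isotypic decomposition and check that multiplication by an invariant is a morphism of rational $G$-modules, but once $E$ is in hand the reduction to the polynomial ring and the Hilbert basis step are unchanged.
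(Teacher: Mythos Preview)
The paper does not prove this theorem at all: it is simply quoted from Nagata's 1964 paper as a black-box input, and the appendix only checks that the compact group $G'$ meets Nagata's hypotheses. So there is no ``paper's own proof'' to compare against.

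Your argument is the classical Hilbert--Nagata proof and is essentially correct \emph{for linearly reductive groups}, but you have misidentified the hypothesis. You write ``semi-reductivity means every rational representation of $G$ is completely reducible,'' yet in the paper (quoting Nagata) that property is the definition of \emph{reductive}; \emph{semi-reductive} is a strictly weaker notion in Nagata's paper. Your construction of the Reynolds operator via isotypic decomposition, and in particular the $I_G(R)$-linearity you rely on, uses complete reducibility in an essential way and does not go through from semi-reductivity alone. So as a proof of the theorem exactly as stated, there is a genuine gap at the very first step.

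That said, for the purposes of this paper the gap is harmless: the only group to which the theorem is applied is the compact group $G'$, which is reductive in Nagata's sense (indeed the paper notes this explicitly, and you yourself observe that Haar averaging gives the Reynolds operator directly). If you relabel your argument as a proof for the reductive case and remark that this already covers the paper's application, the write-up is fine; but as a proof of Nagata's semi-reductive theorem it would need the actual definition of semi-reductive and Nagata's more delicate construction.
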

It was pointed out in the first sentence of part $5$, page $373$ of \cite{[Na]} that a reductive group is obviously semi-reductive.
Putting all these things together, we proved the finite generation of $I_{G'}(R) = \mathcal{O}_P(M)$.


\begin{thebibliography}{99}
\bibitem{[CC1]}J. Cheeger and T. Colding, \emph{Lower bounds on Ricci curvature and the almost rigidity of warped
products}, Ann. of Math. (2) 144 (1996), no. 1, 189-237.

\bibitem{[CC2]}J. Cheeger and T. Colding, \emph{On the structure of spaces with Ricci curvature bounded below. I}, J. Differential Geom. 46
(1997), no. 3, 406-480.
\bibitem{[CC3]}J. Cheeger and T. Colding, \emph{On the structure of spaces with Ricci curvature bounded below. II}, J. Differential Geom. 54
(2000), no. 1, 13-35.
\bibitem{[CC4]}J. Cheeger and T. Colding, \emph{On the structure of spaces with Ricci curvature bounded below. III}, J. Differential Geom. 54
(2000), no. 1, 37-74.
\bibitem{[CCT]}J. Cheeger, T. Colding and G. Tian, \emph{On the singularities of spaces with bounded Ricci curvature}, Geom. Funct. Anal. 12(2002), 873-914.
\bibitem{[CFLZ]} B.-L, Chen, X.-Y. Fu, Y. Le, X.-P. Zhu, \emph{Sharp dimension estimates for holomorphic function and rigidity}, Trans. Amer. Math. Soc. 358(2006), no. 4, 1435-1454.






\bibitem{[CT]} A. Chau and L.-F. Tam, \emph{On the complex structure of K\"ahler manifolds with nonnegative curvature}, J. Diff. Geom, 73(2006), 491-530.
\bibitem{[CY]} S. Y. Cheng and S. T. Yau, \emph{Differential equations on Riemannian manifolds and
their geometric applications}, Comm. Pure Appl. Math. 28 (1975), 333-354.
\bibitem{[CZ]}B. L. Chen, S. H. Tang and X. P. Zhu, \emph{A uniformization theorem for complete noncompact K\"ahler surfaces with positive bisectional curvature}, J. Diff. Geom, 67(2004), 519-570.
\bibitem{[D]}J. P. Demailly, \emph{Analytic methods in algebraic geometry}, Surveys of modern mathematics, Volume $1$.
\bibitem{[DS]}S. K. Donaldson and S. Sun, \emph{Gromov Hausdorff convergence of K\"ahler manifolds and algebraic geometry}, arxiv: 1206. 2609.

\bibitem{[G]} M. Gromov, \emph{Metric structures for riemannian and non-riemannian spaces}, Progress in Mathematics, 152, Birkh\"auser Boston, Inc., Boston, MA, xx+585 pp, 1999.
\bibitem{[H]}R. Hartshone, \emph{Algebraic geometry}, Graduate texts in mathematics, 52.
\bibitem{[Ho]} L. Hormander, \emph{An introduction to complex analysis in several variables}, 3rd edition, North Holland, 1990.

\bibitem{[LT2]}P. Li and L.-F. Tam, \emph{Complete surfaces with finite total curvature}, J. Diff. Geom. 33(1991), 139-168.
\bibitem{[LS]} P. Li and R. Schoen, \emph{$L^p$ and mean value inequalities for subharmonic functions on Riemannian manifolds}, Acta. Math., 153(1984), 279-303.
\bibitem{[L1]} G. Liu, \emph{Three circle theorems on K\"ahler manifolds and applications}, arxiv: 1308. 0710.
\bibitem{[L2]}G. Liu, \emph{On the volume growth of K\"ahler manifolds with nonnegative bisectional curvature}, arxiv: 1403. 3834.
\bibitem{[LY]}P. Li and S. T. Yau, \emph{On the parabolic kernel of the Schr\"dinger operator}, Acta Math. 156(1986), 139-168.
\bibitem{[Mo]}N. Mok, \emph{An embedding theorem of complete K\"ahler manifolds with positive bisectional curvature onto affine algebraic varieties}, Bull. Soc. Math. France, 112(1984), 197-258.
\bibitem{[MSY]} N. Mok, Y. T. Siu and S. T. Yau, \emph{The Poincare-Lelong equation on complete K\"ahler manifolds}, Compositio. Math. 44(1981), 183-281.
\bibitem{[Na]}M. Nagata, with appendix by M. Miyanishi, \emph{Invariants of a group in an affine ring}, J. Math. Kyoto Univ. 3-3(1964), 369-377.
\bibitem{[N1]} L. Ni, \emph{A monotonicity formula on complete K\"ahler manifolds with nonnegative bisectional curvature }, Jour of AMS, Vol 17, No. 4, 909-946.
\bibitem{[N2]} L. Ni, \emph{Ancient solutions to K\"ahler-Ricci flow}, Math. Res. Lett. 12(2005), 633-654.
\bibitem{[NT1]} L, Ni and L.- F. Tam, \emph{Plurisubharmonic functions and the structure of complete K\"ahler manifolds with nonnegative curvature}, J. Diff. Geom. 64(2003), 457-624.
\bibitem{[NT2]}L, Ni and L.- F. Tam, \emph{Poincare-Lelong equation via the Hodge-Laplace heat equation}, Compos. Math. 149 (2013), no. 11, 1856-1870.
\bibitem{[Shi1]}W. X. Shi, \emph{Ricci deformation of the metric on complete noncompact K\"ahler manifolds}, PhD thesis,  Harvard University, 1990.
\bibitem{[Shi2]}W. X. Shi, \emph{Ricci flow and the uniformization on complete noncompact K\"ahler manifolds}, J. Diff. Geom. 45(1997), 94-220.
\bibitem{[Tian]}G. Tian, \emph{Existence of Einstein metrics on Fano manifolds}. Metric and differential geometry, 119-159, Progr. Math., 297, Birkhauser/Springer, Basel, 2012.
\bibitem{[Y1]} S. T. Yau, \emph{Open problems in geometry}, Lectures on Differential Geometry, by Schoen
and Yau 1 (1994), 365-404.
\bibitem{[Y2]} S. T. Yau, \emph{Nonlinear analysis and geometry}, L'Enseignement Mathematique. 33(1987), 109-158.
























\end{thebibliography}
\end{document}